\documentclass[12pt,a4paper]{amsart}
\usepackage{times}
\usepackage{amssymb}
\usepackage{mathtools}
\usepackage{mathtext}
\usepackage{amsmath}
\usepackage{amsthm}
\usepackage[all]{xy}
\usepackage{tikz}
\usetikzlibrary{calc}
\usepgflibrary{shapes.geometric}
\usepgflibrary{shapes.misc}
\usetikzlibrary{positioning}
\usetikzlibrary{decorations}
\usetikzlibrary{decorations.pathreplacing}
\usetikzlibrary{shapes,snakes}
\usepackage{url}
\usepackage[T1]{fontenc}
\usepackage[english]{babel}
\usepackage[top=1in,bottom=1in,right=1in,left=1in]{geometry}

\usepackage[colorinlistoftodos]{todonotes}
\usepackage{xcolor}

\definecolor{sira}{HTML}{ED9090}

\usepackage{color}

\newtheorem{lemma}{Lemma}[section]
\newtheorem{proposition}[lemma]{Proposition}

\newtheorem{theorem}[lemma]{Theorem}
\newtheorem*{theorem*}{Theorem}

\theoremstyle{definition}
\newtheorem{definition}[lemma]{Definition}
\newtheorem{example}[lemma]{Example}

\newtheorem*{notation*}{Notation}

\theoremstyle{remark}
\newtheorem{remark}[lemma]{Remark}


\newcommand{\id}{\operatorname{id}\nolimits}

\newcommand{\XX}{\operatorname{X}\nolimits}
\newcommand{\ex}{\operatorname{ex}\nolimits}

\newcommand{\ZZ}{\mathbb{Z}}

\newcommand{\T}{\mathcal{T}}
\newcommand{\J}{\mathcal{J}}

\newcommand{\CC}{\mathcal{C}}

\newcommand{\A}{\mathcal{A}}

\newcommand{\Z}{\mathcal{Z}}

\begin{document}

\title{Cluster algebras of infinite rank as colimits} 
\author{Sira Gratz} 
\date{\today}

\maketitle

\begin{abstract}
We formalize the way in which one can think about cluster algebras of infinite rank by showing that every rooted cluster algebra of infinite rank can be written as a colimit of rooted cluster algebras of finite rank. Our framework is the category of rooted cluster algebras as introduced by Assem, Dupont and Schiffler. Relying on the proof of the posivity conjecture for skew-symmetric cluster algebras of finite rank by Lee and Schiffler, it follows as a direct consequence that the positivity conjecture holds for cluster algebras of infinite rank.
Furthermore, we give a sufficient and necessary condition for a ring homomorphism between cluster algebras to give rise to a rooted cluster morphism without specializations.
Assem, Dupont and Schiffler proposed the problem of a classification of ideal rooted cluster morphisms. We provide a partial solution by showing that every rooted cluster morphism without specializations is ideal, but in general rooted cluster morphisms are not ideal.

\end{abstract}

\section{Introduction}

Cluster algebras were introduced by Fomin and Zelevinsky \cite{FZ1} to provide an algebraic framework for the study of total positivity and canonical bases in algebraic groups. They are certain commutative rings with a combinatorial structure on a distinguished set of generators, called cluster variables, which can be grouped into overlapping sets of a given cardinality, called clusters. Classically, clusters are finite. However the theory can be extended to allow infinite clusters, giving rise to cluster algebras of infinite rank. Recently, cluster algebras and cluster categories of infinite rank have received more and more attention, for example in work by Igusa and Todorov (\cite{IT:cyclic} and \cite{IT.cluster}) and Hernandez and  Leclerc \cite{HL}, as well as in joint work with Grabowski \cite{GG}.

All the data needed to construct a cluster algebra is contained in an initial seed, consisting of an initial cluster and a rule, encoded in a skew-symmetrizable matrix, which allows us to obtain all other cluster variables and all the relations between them from the initial cluster. Every seed can be mutated at certain cluster variables (called exchangeable cluster variables), giving rise to new seeds with new clusters. Two seeds are called mutation equivalent if they are connected by a finite sequence of mutations of seeds and any two mutation equivalent seeds give rise to the same cluster algebra.

Assem, Dupont and Schiffler introduced the category $Clus$ of rooted cluster algebras in \cite{ADS}. The objects of $Clus$ are what can be thought of as pointed versions of cluster algebras; they are pairs consisting of a cluster algebra and a fixed initial seed. Fixing a distinguished initial seed allows for the definition of natural maps between cluster algebras, so-called rooted cluster morphisms, which are ring homomorphisms commuting with mutation and which provide the morphisms for the category $Clus$.

If the image of a rooted cluster morphism coincides with the rooted cluster algebra generated by the image of the initial seed, it is called an ideal rooted cluster morphism. Assem, Dupont and Schiffler ask in \cite[~Problem 2.12]{ADS} for a classification of ideal rooted cluster morphisms. We answer part of this question by showing that not every rooted cluster morphism is necessarily ideal in Theorem \ref{T:ideal}.
The counterexample we provide is a rooted cluster morphism with specialization, that is, some cluster variables get sent to integers. Rooted cluster morphisms without specializations are more nicely behaved and we characterize them by a necessary and sufficient combinatorial condition. As a result we show that every rooted cluster morphism without specializations is ideal (Proposition \ref{P:without specialisation implies ideal}).

Since we are interested in cluster algebras of infinite rank, we study colimits and limits in the category $Clus$. We show that the category $Clus$ is neither complete nor cocomplete, that is, limits and colimits do not in general exist. However, it has sufficient colimits to express any cluster algebra of infinite rank as a colimit of cluster algebras of finite rank, as we show in our main result Theorem \ref{T:connected colimit}.

\begin{theorem}
Every rooted cluster algebra of infinite rank can be written as a colimit of rooted cluster algebras of finite rank in the category $Clus$.
\end{theorem}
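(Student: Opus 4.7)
The plan is to exhibit $(\A, \Sigma)$ as the filtered colimit of the finite rank rooted cluster algebras obtained by restricting $\Sigma$ to finite subsets of its exchangeable variables, and then to verify the colimit universal property directly.

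First, I would set up the approximating system. Write $\Sigma = (\mathbf{x}, \mathbf{ex}, B)$ with cluster $\mathbf{x}$, exchangeable part $\mathbf{ex} \subseteq \mathbf{x}$, and exchange matrix $B$. For every finite subset $F \subseteq \mathbf{ex}$, I would form a finite seed $\Sigma_F$ whose exchangeable part is $F$, whose frozen part consists of the $x \in \mathbf{x} \setminus F$ with $B_{x,y} \neq 0$ for some $y \in F$, and whose exchange matrix is the corresponding restriction of $B$. Using the local formula for matrix mutation, a short induction would confirm that iterated mutation of $\Sigma_F$ only ever produces exchange relations supported on the cluster of $\Sigma_F$, so the associated rooted cluster algebra $(\A_F, \Sigma_F)$ is genuinely of finite rank and lies in $Clus$.

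Second, I would organise these into a filtered diagram. For $F \subseteq F'$ the boundary of $F$ sits inside $F'$ together with the boundary of $F'$, so the obvious identification of the variables of $\Sigma_F$ with variables of $\Sigma_{F'}$ extends to a ring homomorphism $f_{F, F'}\colon \A_F \to \A_{F'}$. This assignment may turn a frozen variable of $\Sigma_F$ into an exchangeable variable of $\Sigma_{F'}$, but because the exchange matrices agree on the columns indexed by $F$, the map $f_{F, F'}$ is still a rooted cluster morphism. Together with the analogous maps $f_F \colon (\A_F, \Sigma_F) \to (\A, \Sigma)$ this gives a cocone over the diagram.

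Third, I would verify the universal property. Given a compatible cocone $\{g_F \colon (\A_F, \Sigma_F) \to (\BB, \Sigma')\}$, I would define $g \colon (\A, \Sigma) \to (\BB, \Sigma')$ by observing that every cluster variable of $\A$ is obtained from $\Sigma$ by a finite sequence of mutations, hence lies in the image of $f_F$ for some sufficiently large $F$, and setting $g$ to agree with $g_F$ on that image. Compatibility with the $f_{F, F'}$ makes this independent of the choice of $F$; uniqueness is forced by the fact that $\A$ is generated by the union of the images of the $f_F$; and commutation with mutation at $x \in \mathbf{ex}$ reduces to the same property for some $g_F$ with $x \in F$.

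The main obstacle I foresee is the second step: checking carefully that each $(\A_F, \Sigma_F)$ is a well-defined object of $Clus$ and that the transition maps satisfy every axiom of a rooted cluster morphism in the sense of \cite{ADS}, in particular the compatibility of matrix mutation with the assignments that turn frozen variables into exchangeable ones. This will likely require a precise combinatorial lemma describing how the exchange matrix of a restricted seed interacts with iterated mutation inside a larger ambient seed.
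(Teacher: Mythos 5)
Your overall strategy---exhausting $\Sigma$ by finite full subseeds that are attached to the ambient seed only through their frozen variables, invoking the fact that such inclusions are rooted cluster morphisms, and then checking the universal property by tracking each cluster variable into a sufficiently large finite stage---is essentially the strategy of the paper, and the technical lemma you flag as the main obstacle is precisely Lemma~\ref{L:subseed morphism} (inclusions of full subseeds connected only by coefficients give rooted cluster morphisms), which rests on Theorem~\ref{T:rooted cluster morphisms without specializations}. Your choice of a filtered poset of finite subsets rather than a linear chain is harmless for the statement as given, which asks only for a colimit rather than the linear colimit produced in Theorem~\ref{T:connected colimit}.

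There is, however, a genuine gap in the construction. The cluster of your $\Sigma_F$ is $F$ together with the neighbours of $F$, so the union of the clusters over all finite $F \subseteq \ex$ is only $\ex \cup \{x \in \XX \mid x \text{ has a neighbour in } \ex\}$. Any coefficient of $\Sigma$ none of whose neighbours is exchangeable---in particular any isolated coefficient---is a cluster variable of $\A(\Sigma)$ that lies in the image of no $f_F$. Consequently the claim in your third step that every cluster variable of $\A(\Sigma)$ lies in the image of some $f_F$ is false, and uniqueness of the induced map $g$ fails because its value on such coefficients is unconstrained by the cocone; the colimit of your diagram is $\A$ of the full subseed on $\ex$ and its neighbours, not $\A(\Sigma)$. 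The degenerate case $\ex = \emptyset$ with $\XX$ infinite makes this vivid: your diagram consists of the single object $\ZZ$, whereas $\A(\Sigma) \cong \ZZ[x \mid x \in \XX]$---and the paper explicitly insists on including seeds with infinitely many coefficients and few (or no) exchangeable variables in the notion of infinite rank. The repair is small: index instead by finite subsets $F \subseteq \XX$ and let $\Sigma_F$ have cluster $F$ together with the neighbours of $F \cap \ex$ and exchangeable part $F \cap \ex$. This is in effect what the paper does, by decomposing $\Sigma$ into connected components, growing neighbourhoods of a base point in each component (which sweeps up every cluster variable), and splicing the components together into a single linear system.
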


We expect this statement to be a useful tool in extending results that are known for (certain) cluster algebras of finite rank to cluster algebras of infinite rank. For example, it is a direct consequence of our result that the positivity conjecture for skew-symmetric cluster algebras as proved by Lee and Schiffler in \cite{LS} holds for cluster algebras of infinite rank.

An important source of (rooted) cluster algebras are triangulations of marked surfaces, as studied for triangulations of surfaces with finitely many marked points by Fomin, Shapiro and Thurston \cite{Fomin-Shapiro-Thurston}. Motivated by the study of cluster structures on a category of infinite Dynkin type by Holm and J\o rgensen \cite{HJ}, in a joint paper with Grabowski \cite{GG} cluster algebras arising from triangulations of the closed disc with infinitely many marked points on the boundary with one limit point have been classified. 

In a series of papers, Igusa and Todorov studied more general cluster categories of infinite Dynkin type $A$. In \cite{IT.cluster} they constructed the continuous cluster category $\CC_\pi$ of Dynkin type $A$. The category $\CC_\pi$ has a cluster structure encoded by certain countable triangulations of the closed disc with marked points lying densely on the boundary. Furthermore, in \cite{IT:cyclic}, they constructed cluster categories coming from cyclic posets -- and as a particular example discrete versions of infinite cluster categories of Dynkin type $A$ in \cite[~Section 2.4]{IT:cyclic}, where cluster structures are encoded by certain triangulations of the closed disc with a fixed discrete set of marked points on the boundary. The simplest case of these discrete infinite cluster categories, where the marked points have one limit point is just the category studied by Holm and J\o rgensen in \cite{HJ}. We show that every rooted cluster algebra arising from a countable triangulation of the closed disc can be written as a colimit of finite rooted cluster algebras of Dynkin type $A$, thus providing an algebraic interpretation for the cluster categories of infinite Dynkin type $A$.

\subsection*{Acknowledgements}
The author thanks her supervisor Thorsten Holm as well as David Ploog and Adam-Christiaan van Roosmalen for helpful discussions. The author would also like to thank Wen Chang and Bin Zhu for pointing out some relations between our work on ideal rooted cluster morphisms and their paper \cite{CZ}.

This work has been carried out in the framework of the research priority programme SPP 1388 Darstellungstheorie of the Deutsche Forschungsgemeinschaft (DFG). The author gratefully acknowledges financial support through the grant HO 1880/5-1.

\section{Rooted cluster algebras} \label{S:rooted cluster algebras}

Cluster algebras were introduced by Fomin and Zelevinsky in \cite{FZ1}. Throughout this paper we work with cluster algebras of geometric type and we consider their rooted versions, which we obtain by fixing an initial seed. Rooted cluster algebras are the objects in the category $Clus$ we want to work in, and which was introduced by Assem, Dupont and Schiffler \cite{ADS}. 

\subsection{Seeds}\label{S:Seeds from triangulations of the closed disc}

All the information we need to construct a (rooted) cluster algebra is contained in a so-called seed. Along with a distinguished subset of generators for our cluster algebra, it contains a rule that describes how a prescribed set of generators and the relations between them can be obtained. This rule can be encoded in a skew-symmetrizable integer matrix. A {\em skew-symmetrizable integer matrix} is a square integer matrix $B$ such that there exists a diagonal matrix $D$ with positive integer entries and a skew-symmetric integer matrix $S$ with $S=DB$.

\begin{definition}[{\cite[Section~1.2]{FZ2}}]\label{D:seed}
A {\em seed} is a triple $\Sigma = (\XX, \ex, B)$, where
\begin{itemize}
\item{$\XX$ is a countable set of indeterminates over $\mathbb{Z}$, i.e.\ the field $\mathcal{F}_{\Sigma} = \mathbb{Q}(x \mid  x \in \XX)$ of rational functions in $\XX$ is a purely transcendental field extension of $\mathbb{Q}$. The set $\XX$ is called the {\em cluster of $\Sigma$}.}
\item{$\ex \subseteq \XX$ is a subset of the cluster. The elements of $\ex$ are called the {\em exchangeable variables of $\Sigma$}. The elements $\XX \setminus \ex$ are called the {\em coefficients of $\Sigma$}.}
\item{$B = (b_{vw})_{v,w \in \XX}$ is a skew-symmetrizable integer matrix with rows and columns labelled by $\XX$, which is locally finite, i.e.\ for every $v \in \XX$ 
there are only finitely many non-zero entries $b_{vw}$ and $b_{uv}$. The matrix $B$ is called the {\em exchange matrix of $\Sigma$}. }
\end{itemize}
The field $\mathcal{F}_\Sigma= \mathbb{Q}(x \mid  x \in \XX)$ is  called the {\em ambient field} of the seed $\Sigma$.
Two seeds $\Sigma = (\XX, \ex, B = (b_{vw})_{v,w \in \XX})$ and $\Sigma' = (\XX',\ex',B'=(b'_{vw})_{v,w \in \XX'})$ are called {\em isomorphic}, and we write $\Sigma \cong \Sigma'$, if there exists a bijection $f \colon \XX \to \XX'$ inducing a bijection $f \colon \ex \to \ex'$ such that for all $v,w \in \XX$ we have $b_{vw} = b'_{f(v)f(w)}$.
\end{definition}

\begin{remark}\label{R:local finiteness}\label{R:countability}
The assumption of countability of the cluster $\XX$ in a seed is not necessary for any of our results to hold (up to a minor change in Theorem \ref{T:connected colimit}, cf.\ Remark \ref{R:colimit uncountable}). However, as we will see in Remark \ref{R:connected implies countable}, from a combinatorial viewpoint one does not observe any new phenomena by considering uncountable seeds. Where appropriate, we will include a short remark clarifying the situation for uncountable clusters.
\end{remark} 

Often when giving examples it is more intuitive to think of the combinatorics of a seed as encoded in a quiver instead of in a matrix. This is possible if the exchange matrix is skew-symmetric.

\begin{remark}\label{R:skew-symmetric}
If the exchange matrix $B$ of the seed $\Sigma = (\XX,\ex,B)$ is skew-symmetric, we can express it via a quiver $Q_B$. The vertices of $Q_B$ are labelled by elements in the cluster $\XX$ and there are $b_{vw}$ arrows from $v$ to $w$ whenever $b_{vw}\geq 0$. The quiver $Q_B$ is locally finite, i.e.\ there are only finitely many arrows incident with every vertex. 
For a seed whose exchange matrix is skew-symmetric by abuse of notation we will often write $\Sigma = (\XX,\ex,Q_B)$ for the seed $\Sigma = (\XX,\ex,B)$. 
\end{remark}

To any seed $\Sigma = (\XX, \ex, B)$ we can naturally associate its {\em opposite seed} $\Sigma^{op} = (\XX, \ex, -B)$, by reversing all signs in the exchange matrix $B$. If $B$ is skew-symmetric, this corresponds to reversing all arrows in the associated quiver $Q_B$ which gives rise to the opposite quiver $Q^{op}_B$.

An important source of seeds is provided by triangulations of surfaces with (possibly infinitely many) marked points. Throughout this paper we will follow the example of countable triangulations of the closed disc with marked points on the boundary. This provides a connection to the work of Holm and J\o rgensen \cite{HJ} and Igusa and Todorov (\cite[Section~2.4]{IT:cyclic} and \cite{IT.cluster}), covering cluster categories of countable rank which have combinatorial models via triangulations of the closed disc.

Let us start by defining what we mean by a triangulation of the closed disc $\overline{D}_2$. We cover the boundary $\partial \overline{D}_2 = S^1$ of the closed disc by $\mathbb{R}$ in the usual way: $e \colon \mathbb{R} \to S^1, x \mapsto e(x):= e^{ix}$.

For any two elements $a,b \in S^1$ choose a lifting $\tilde{a} \in \mathbb{R}$ of $a$ and $\tilde{b} \in \mathbb{R}$ of $b$ under the map $e$ such that $\tilde{a} \leq \tilde{b} < \tilde{a}+2\pi$. Then we denote by $[a,b]$ the image
\[
[a,b] = e([\tilde{a},\tilde{b}]).
\]
We define the open interval $(a,b)$ and the half-open intervals $[a,b)$ and $(a,b]$ analogously.

We view $\overline{D}_2 \subseteq \mathbb{R}^2$ as a topological space with the standard topology. Let $\Z \subseteq S^1$ be a subset of the boundary of $\overline{D}_2$. To rule out trivial cases, throughout we assume that any such subset contains at least two elements, i.e.\ $|\Z| \geq 2$.

\begin{definition}
An {\em arc of $\Z$} is a two-element subset of $\Z$, i.e.\ a set $\{x_0,x_1\} \subseteq \Z$ with $x_0 \neq x_1$.  
An arc $\{x_0,x_1\}$ of $\Z$ is called an {\em edge of $\Z$} if $(x_0,x_1) \cap \Z = \emptyset$ or $(x_1,x_0) \cap \Z = \emptyset$. An arc of $\Z$ that is not an edge of $\Z$ is called an {\em internal arc of $\Z$}.

Two arcs $\{x_0,x_1\}$ and $\{y_0,y_1\}$ are said to {\em cross} if either $y_0 \in (x_0,x_1)$ and $y_1 \in (x_1,x_0)$ or $y_1 \in (x_0,x_1)$ and $y_0 \in (x_1,x_0)$, i.e.\;if the straight line connecting $x_0$ and $x_1$ crosses the straight line connecting $y_0$ and $y_1$ in the closed disc.

A {\em triangulation of the closed disc with marked points $\Z $} is a maximal collection $\T$ of pairwise non-crossing arcs of $\Z$, i.e.\;a collection $\T$ of non-crossing arcs of $\Z$ such that every arc of $\Z$ that is not contained in $\T$ crosses at least one arc in $\T$. We call a triangulation $\T$ a {\em countable triangulation of the closed disc}, if the set $\T$ is countable.
\end{definition}

\begin{remark}
Note that in order for a triangulation $\T$ of the closed disc with marked points $\Z$ to be countable, the set $\Z \subseteq S^1$ does not need to be countable. Consider for example $\Z = S^1$ and the triangulation
\[
\T=\{\left\{e\left(\frac{m \pi}{2^n}\right),e\left(\frac{(m+1) \pi}{2^n}\right)\right\} \mid n \geq 0, 0 \leq m < 2^{n+1}\}
\]
of the closed disc with marked points $S^1$, where the endpoints of the arcs in $\T$ are a countable dense subset of $S^1$ (see Figure \ref{fig:standard triangulation} for a picture). Thus $\T$ is a countable triangulation of the closed disc with uncountably many marked points $\Z = S^1$. Similarly, any subset $\Z \subseteq S^1$ allows a countable triangulation of the closed disc with marked points $\Z$.
\end{remark}

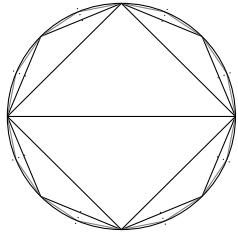
\begin{figure}
\centering{
\begin{tikzpicture}[scale = 3, font = \footnotesize, font = \sffamily, font=\sansmath\sffamily]
        \tikzstyle{every node}=[font=\small]
	\draw (0:0.5) -- (180:0.5);
	\draw (0:0.5) -- (90:0.5);
	\draw (90:0.5) -- (180:0.5);
	\draw (270:0.5) -- (180:0.5);
	\draw (0:0.5) -- (270:0.5);
	\draw[] (0:0.5) -- (45:0.5);
	\draw[] (45:0.5) -- (90:0.5);
	\draw[] (90:0.5) -- (135:0.5);
	\draw[] (135:0.5) -- (180:0.5);
	\draw[] (225:0.5) -- (180:0.5);
	\draw[] (225:0.5) -- (270:0.5);
	\draw[] (270:0.5) -- (315:0.5);
	\draw[] (0:0.5) -- (315:0.5);

	\draw[color = black!50] (0:0.5) -- (22.5:0.5);
	\draw[color = black!50] (22.5:0.5) -- (45:0.5);
	\draw[color = black!50] (45:0.5) -- (67.5:0.5);
	\draw[color = black!50] (67.5:0.5) -- (90:0.5);
	\draw[color = black!50] (90:0.5) -- (112.5:0.5);
	\draw[color = black!50] (112.5:0.5) -- (135:0.5);
	\draw[color = black!50] (135:0.5) -- (157.5:0.5);
	\draw[color = black!50] (157.5:0.5) -- (180:0.5);
	\draw[color = black!50] (0:0.5) -- (-22.5:0.5);
	\draw[color = black!50] (-22.5:0.5) -- (-45:0.5);
	\draw[color = black!50] (-45:0.5) -- (-67.5:0.5);
	\draw[color = black!50] (-67.5:0.5) -- (-90:0.5);
	\draw[color = black!50] (-90:0.5) -- (-112.5:0.5);
	\draw[color = black!50] (-112.5:0.5) -- (-135:0.5);
	\draw[color = black!50] (-135:0.5) -- (-157.5:0.5);
	\draw[color = black!50] (-157.5:0.5) -- (-180:0.5);

        	\draw (0,0) circle(0.5cm);
	
	\draw[dotted] (22:0.455) -- (22:0.52);
	\draw[dotted] (67:0.455) -- (67:0.52);
	\draw[dotted] (112:0.455) -- (112:0.52);
	\draw[dotted] (157:0.455) -- (157:0.52);
	\draw[dotted] (202:0.455) -- (202:0.52);
	\draw[dotted] (247:0.455) -- (247:0.52);
	\draw[dotted] (292:0.455) -- (292:0.52);
	\draw[dotted] (337:0.455) -- (337:0.52);
  \end{tikzpicture}
}
\caption{A countable triangulation of the closed disc with uncountably many marked points}
\end{figure}\label{fig:standard triangulation}

\begin{remark}
An edge of a subset $\Z \subseteq S^1$ crosses no other arcs of $\Z$. Thus by definition every triangulation of the closed disc with marked points $\Z \subseteq S^1$ must contain all edges of $\Z$. Note that the set of edges can be empty, for example if we have $\Z = S^1$.
\end{remark}

To any countable triangulation of the closed disc we can associate a seed, via the same method that has been introduced by Fomin, Shapiro and Thurs\-ton \cite[Definition~4.1 and Section~5]{Fomin-Shapiro-Thurston} for finite triangulations of surfaces.

\begin{definition}\label{seed of a triangulation}
Let $\T$ be a countable triangulation of the closed disc with marked points $\Z \subseteq S^1$. The {\em seed $\Sigma_{\T}$ associated to $\T$} is the triple $\Sigma_{\T} = (\T, \ex_{\T}, Q_{\T})$ defined as follows.
\begin{itemize}
\item{The elements in the cluster are labelled by the arcs in $\T$.}
\item{An arc $\{x_0,x_1\} \in \T$ is called {\em exchangeable in $\T$}, if it is the diagonal of a quadrilateral in $\T$, i.e.\;if there exist vertices $y_0,y_1 \in \Z$ with $y_0 \in (x_0,x_1)$ and $y_1 \in (x_1,x_0)$ such that $\{x_0,y_0\}, \{y_0,x_1\}, \{x_1,y_1\}$ and  $ \{y_1,x_0\}$ lie in $\T$. The exchangeable variables $\ex_\T$ are labelled by exchangeable arcs in $\T$. 
}
\item{The exchange matrix of $\Sigma_\T$ is skew-symmetric and we express it via the quiver $Q_\T$: The vertices of $Q_{\T}$ are labelled by the arcs in $\T$, and for $\{x_0,x_1\}, \{y_0,y_1\} \in \T$ there is an arrow $\{x_0,x_1\} \to \{y_0,y_1\}$ in $Q_{\T}$ if and only if the arcs $\{x_0,x_1\}$ and $\{y_0,y_1\}$ are sides of a common triangle in $\T$ and $\{y_0,y_1\}$ lies in a clockwise direction from $\{x_0,x_1\}$:
\begin{center}
\begin{tikzpicture}
\tikzstyle{every node}=[font=\small]
\draw (0,0) -- node[below]{$\{x_0,x_1\}$} (2,0) -- (2,2) -- node[left] {$\{y_0,y_1\}$} (0,0) -- cycle;
\draw[->] (10:1) -- (40:1);
\draw[->] (1.5,0.6) arc (0:0.2) -- (-340:0.2);
\end{tikzpicture}
\end{center}
}
\end{itemize}
\end{definition}

Because every arc in $\T$ is the side of at most two triangles in $\T$, the quiver $Q_{\T}$ is locally finite and the seed $\Sigma_{\T}$ associated to a triangulation $\T$ of the closed disc is indeed a seed in the sense of Definition \ref{D:seed} in light of Remark \ref{R:skew-symmetric}. 

\begin{remark}\label{R:exchangeable arcs}
An exchangeable arc in a triangulation $\T$ of the closed disc is always internal, as every edge is adjacent to at most one triangle in $\T$ and hence cannot be the diagonal of a quadrilateral in $\T$. However, not every internal arc is necessarily exchangeable. Consider for example the subset 
\[\Z = \{e\left({\frac{\pi}{k}}\right)|k \in \mathbb{Z}\setminus \{0\}\}\subseteq S^1\] 
which has exactly one limit point at $1$, and the triangulation $\T$ of the closed disc with marked points $\Z$ whose internal arcs are given by
\begin{align*}
\T_{int} =& \{\{e\left({\frac{\pi}{2}}\right),e\left({\frac{\pi}{k}}\right)\}|k \in \mathbb{Z}_{> 3}\}  \cup \{\{e\left({-\frac{\pi}{2}}\right),e\left({-\frac{\pi}{k}}\right)\}|k \in \mathbb{Z}_{>3}\} \\ 
&\cup \{\{e\left({\frac{\pi}{2}}\right),e\left({-\frac{\pi}{2}}\right)\}\}
\end{align*}
 (see Figure \ref{fig:split fountain}), i.e.\ $\T$ consists of the union of $\T_{int}$ and all edges of $\Z$.
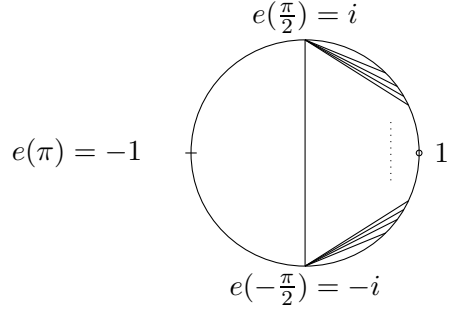
\begin{figure}
\centering{
\begin{tikzpicture}[scale = 1.5, font = \footnotesize, font = \sffamily, font=\sansmath\sffamily]
\tikzstyle{every node}=[font=\small]
\draw (0:0) circle (1);

\draw (90:1) -- (45:1);
\draw (90:1) -- (36:1);
\draw (90:1) -- (30:1);
\draw (90:1) -- (25:1);
\draw[dotted] (20:0.8) -- (-20:0.8);

\draw (-90:1) -- (-45:1);
\draw (-90:1) -- (-36:1);
\draw (-90:1) -- (-30:1);
\draw (-90:1) -- (-25:1);

\draw (90:1) -- (-90:1);
\draw (180:0.95) -- (180:1.05);
\node (a) at (1.2,0){$1$};
\node (a) at (90:1.2){$e({\frac{\pi}{2}}) = i$};
\node (a) at (-90:1.2){$e({-\frac{\pi}{2}})=-i$};
\node (a) at (180:2){$e({\pi})=-1$};
\draw (0:1) circle (0.025);

\end{tikzpicture}
}
\caption{Example of a triangulation with non-exchangeable  internal arc $\{-i,i\}$}
\label{fig:split fountain}
\end{figure}
The arc $\{e({\frac{\pi}{2}}),e({-\frac{\pi}{2}})\} \in \T$ is internal. However, it is not exchangeable: If it was, then it would have to be contained in a quadrilateral in $\T$, so there would exist a $z \in (e({-\frac{\pi}{2}}),e({\frac{\pi}{2}})) \cap \Z$ with $\{e({\frac{\pi}{2}}),z\},\{z,e({-\frac{\pi}{2}})\} \in \T$. However, if $z \in (1,e({\frac{\pi}{2}}))$ then the arc $\{z,e({-\frac{\pi}{2}})\}$ intersects infinitely many of the arcs in $\{\{e({\frac{\pi}{2}}),e({\frac{\pi}{k}})\}|k \in \mathbb{Z}_{> 2}\} \subseteq \T$ and otherwise, if $z \in (e({-\frac{\pi}{2}}),1)$, the arc $\{e({\frac{\pi}{2}}),z\}$ intersects infinitely many of the arcs in $\{\{e({-\frac{\pi}{2}}),e({\frac{\pi}{k}})\}|k \in \mathbb{Z}_{< 2}\} \subseteq \T$. This leads to a contradiction, since arcs in $\T$ have to be pairwise non-crossing.
\end{remark}

\subsection{Mutation}\label{S:mutation}

A seed $\Sigma = (\XX, \ex, B)$ contains all of the data that is needed to construct the associated (rooted) cluster algebra. In order to actually obtain generators for the cluster algebra, a combinatorial process, which is called mutation, is applied. The information needed to perform mutation is encoded in the exchange matrix $B$.

\begin{definition}[{\cite[Definition~1.1]{FZ2}}]\label{D:mutation of a seed}
Let $\Sigma = (\XX, \ex, B)$ be a seed and let $x \in \ex$ be an exchangeable variable of $\Sigma$. We denote the {\em mutation of $\Sigma$ at $x$} by  
$\mu_x(\Sigma) =  (\mu_{x}(\XX), \mu_{x}(\ex), \mu_{x}(B))$.
It is defined by the following data.
\begin{itemize}
\item{For any $y \in X$ the mutation of $y$ at $x$ is defined by
\begin{align*}
\mu_{x}(y) = y, \text{ if } y \neq x \\
\end{align*}
and
\begin{eqnarray}\label{exchange relation}
\mu_{x}(x) = \frac{\prod\limits _{v \in \XX \colon b_{xv}>0} v^{b_{xv}} + \prod\limits _{v \in \XX \colon b_{xv}<0} v^{-b_{xv}}}{x} \in \mathcal{F}_{\Sigma}.
\end{eqnarray}
The equations of the form \eqref{exchange relation} are called {\em exchange relations}. The cluster, respectively the exchangeable variables, of the seed $\mu_{x}(\Sigma)$ thus are
\begin{align*}\mu_{x}(\XX) &= \{\mu_{x}(y)|y\in \XX\} = (\XX \setminus x) \cup \mu_{x}(x) \text{ and }\\ \mu_{x}(\ex) &= \{\mu_{x}(y)|y\in \ex\} = (\ex \setminus x) \cup \mu_{x}(x).\end{align*}
}
\item{The matrix $\mu_{x}(B) = (\tilde{b}_{\tilde{v}\tilde{w}})_{\tilde{v},\tilde{w} \in \mu_x(\XX)}$ is given by {\em matrix mutation of $B$ at $x$}: For $\tilde{v} = \mu_x(v)$ and $\tilde{w} = \mu_x(w)$ set
\[
\tilde{b}_{\tilde{v}\tilde{w}} = \mu_x(b_{vw})=\begin{cases}
-b_{vw} \text{ if $v = x$ or $w = x$,} \\
b_{vw}+\frac{1}{2}(|b_{vx}|b_{xw}+b_{vx}|b_{xw}|), \text{ otherwise.}
\end{cases}
\]
}
\end{itemize} 
\end{definition}

\begin{remark}\label{R:involutive}
The following facts are well-known and straightforward to check.
\begin{itemize}
\item[$(1)$]
{
Mutation is involutive, i.e.\ for a seed $\Sigma = (\XX, \ex, B)$ and any $x \in \ex$ we have $\mu_{\mu_x(x)} \circ \mu_{x}(\Sigma) = \Sigma$. 
}
\item[$(2)$]{
Let $\Sigma = (\XX,\ex,B)$ be a seed and let $x \in \ex$. The cluster $\mu_{x}(\XX)$ of the seed $\mu_{x}(\Sigma)$ is a transcendence basis of the ambient field $\mathcal{F}_\Sigma = \mathbb{Q}(X)$ of $\Sigma$.}
\end{itemize}
\end{remark}

In the case where the exchange matrix $B$ is skew-symmetric, mutation of $B$ corresponds to quiver mutation of the associated quiver $Q_B$, where mutation of the quiver $Q_B$ at a vertex $v$ of $Q_B$ is denoted by $\mu_v(Q_B) := Q_{\mu_v(B)}$.

Consider our standard example of a seed $\Sigma_\T = (\XX_\T,\ex_\T,Q_\T)$ associated to a countable triangulation $\T$ of the closed disc with marked points $\Z \subseteq S^1$. Geometrically, mutation of $\Sigma_\T$ at an exchangeable variable in $\ex_{\T}$ can be represented by a so-called diagonal flip of $\T$. Every exchangeable arc $\{x_0,x_1\} \in \ex_\T$ is the diagonal of a unique quadrilateral with vertices $x_0,x_1,x'_0$ and $x'_1$ in $\Z$, whose sides $\{x_0,x'_0\}$, $\{x'_0,x_1\}$, $\{x_1,x'_1\}$ and $\{x'_1,x_0\}$ are all contained in $\T$. 
The {\em diagonal flip of $\T$ at $\alpha = \{x_0,x_1\}$} is the map $f_\alpha \colon \T \to (\T \setminus \alpha) \cup \overline{\alpha}$ which replaces the arc $\alpha$ in $\T$ by the arc $\overline{\alpha} = \{x'_0,x'_1\}$ of $\Z$ and leaves all other arcs invariant.

It is well-known for finite triangulations of the closed disc that for any exchangeable arc $\alpha \in \T$ we have $\mu_{\alpha}(Q_{\T}) = Q_{f_{\alpha}(\T)}$. Since mutations of quivers and diagonal flips are defined locally, only a finite subquiver of $Q_\T$ is affected by the mutation at $\alpha$: This is the full subquiver consisting of $\alpha$ and those vertices of $Q_\T$ that are labelled by the arcs of the unique quadrilateral in $\T$ that has $\alpha$ as a diagonal. Therefore the equality $\mu_{\alpha}(Q_{\T}) = Q_{f_{\alpha}(\T)}$ remains true for infinite triangulations.

\subsection{Rooted cluster algebras}\label{S:Rooted cluster algebras associated to triangulations of the closed disc}

Mutation of a seed at any exchangeable variable in its cluster yields another seed, which again can be mutated at any exchangeable variable in its respective cluster. Thus we can successively mutate a seed $\Sigma$ along what are called $\Sigma$-admissible sequences. Mutation along all possible $\Sigma$-admissible sequences will provide a prescribed set of generators of the cluster algebra associated to the seed $\Sigma$, the definition of which we will recall in this section.

\begin{definition}[{\cite[Definition~1.3]{ADS}}]\label{D:admissible sequence}
Let $\Sigma = (\XX, \ex, B)$ be a seed. For $l \geq 1$ a sequence $(x_1, \ldots, x_l)$ is called {\em $\Sigma$-admissible} if $x_1 \in \ex$ and for every $2 \leq k \leq l$, we have $x_k \in \mu_{x_{k-1}}\circ \ldots \circ \mu_{x_1}(\ex)$. The empty sequence of length $l = 0$ is $\Sigma$-admissible for every seed $\Sigma$ and mutation of $\Sigma$ along the empty sequence leaves $\Sigma$ invariant. We denote by
\[
\mathrm{Mut}(\Sigma) = \{\mu_{x_l} \circ \ldots \circ \mu_{x_1}(\Sigma)\mid l \geq 0, (x_1, \ldots, x_l) \text{ $\Sigma$-admissible}\}
\]
the set of all seeds which can be reached from $\Sigma$ by iterated mutation along $\Sigma$-admissible sequences and call it the {\em mutation class of $\Sigma$}.
\end{definition}

Since mutation is involutive (see Remark \ref{R:involutive} (1)), it is clear that mutation along $\Sigma$-admissible sequences induces an equivalence relation on seeds, where two seeds $\Sigma$ and $\Sigma'$ are {\em mutation equivalent} if and only if there exists a $\Sigma $-admissible sequence $(x_1, \ldots, x_l)$ with $\mu_{x_l} \circ \ldots \circ \mu_{x_1}(\Sigma) = \Sigma'$. The mutation class of a seed $\Sigma$ is thus really an equivalence class. Analogously, mutation equivalence of locally finite, skew-symmetrizable exchange matrices is defined.

\begin{remark} \label{R:coefficients in every seed}
It is a direct consequence of Definition \ref{D:mutation of a seed} that if two seeds $\Sigma$ and $\Sigma'$ are mutation equivalent, then the coefficients of $\Sigma$ are precisely the coefficients of $\Sigma'$. Further, by Remark \ref{R:involutive} (2), any two mutation equivalent seeds  give rise to the same ambient field $\mathcal{F}_\Sigma = \mathcal{F}_{\Sigma'}$.
\end{remark}

By mutating a seed $\Sigma$ along all possible $\Sigma$-admissible sequences we obtain the mutation class $\mathrm{Mut}(\Sigma)$ of $\Sigma$ and with it a collection of overlapping clusters. Let $P(\mathcal{F}_\Sigma)$ denote the powerset  of the ambient field $\mathcal{F}_\Sigma$, 
and let \[cl_{\Sigma} \colon \mathrm{Mut}(\Sigma) \to P(\mathcal{F}_\Sigma), (\tilde{\XX}, \tilde{\ex}, \tilde{B}) \mapsto \tilde{\XX}\] be the map assigning to each seed in the mutation class of $\Sigma$ its cluster. We now define the cluster algebra associated to a given seed $\Sigma$. The original definition for cluster algebras of finite rank is given by Fomin and Zelevinsky in \cite[Definition~2.3]{FZ1} and rooted cluster algebras are defined in \cite[Definition~1.4]{ADS}.

\begin{definition}\label{D:rooted cluster algebra}\label{D:cluster algebra}
Let $\Sigma$ be a seed. The {\em cluster algebra associated to $\Sigma$} is the $\ZZ$-subalgebra of its ambient field $\mathcal{F}_{\Sigma}$ given by
\[
\A(\Sigma) = \ZZ[x\mid x \in cl_{\Sigma}(\mathrm{Mut}(\Sigma))] \subseteq \mathcal{F}_{\Sigma}.
\]
The {\em rooted cluster algebra with initial seed $\Sigma$} is the pair $(\A(\Sigma), \Sigma)$, where $\A(\Sigma)$ is the cluster algebra associated to $\Sigma$.
The elements of $cl_{\Sigma}(\mathrm{Mut}(\Sigma))$ are called the {\em cluster variables} and the coefficients of $\Sigma$ are called the {\em coefficients} of the rooted cluster algebra $(\A(\Sigma),\Sigma)$. We call the rooted cluster algebra $(\A(\Sigma),\Sigma)$ {\em skew-symmetric}, if the matrix $B$ is skew-symmetric. The {\em rank} of the rooted cluster algebra $(\A(\Sigma),\Sigma)$ is defined as the cardinality of the cluster of $\Sigma$.
\end{definition}

\begin{remark}
Traditionally, the rank of a cluster algebra $\A(\Sigma)$ is defined as the cardinality of the set of exchangeable variables of $\Sigma$, while we define it as the cardinality of the cluster of $\Sigma$. The main point of interest in this paper are cluster algebras of infinite rank, and when we talk about those we explicitely want to include cluster algebras associated to seeds with infinitely many coefficients but only finitely many exchangeable variables.
\end{remark}

\begin{example}
For a seed $\Sigma = (X, \emptyset, B)$ with no exchangeable cluster variables, we have $\mathrm{Mut}(\Sigma) = \{ \Sigma \}$ and the cluster algebra $\A(\Sigma)$ is isomorphic to the polynomial algebra $\mathbb{Z}[x\mid x \in X]$. The empty seed $\Sigma_{\emptyset} = (\emptyset, \emptyset, \emptyset)$ gives rise to the cluster algebra $\A(\Sigma_{\emptyset}) \cong \mathbb{Z}$.
\end{example}

Two seeds in the same mutation class give rise to the same cluster algebra. 
However, they do not give rise to the same rooted cluster algebra. We can think of rooted cluster algebras as pointed versions of cluster algebras.

\begin{example}\label{E:Dynkin type A}
If $\T$ is a finite triangulation of the closed disc with marked points $\Z \subseteq S^1$ of cardinality $|\Z| = n+3$ for an $n \geq 1$, then the ring $\A(\Sigma_{\T})$ is a cluster algebra of Dynkin type $A_n$, i.e.\ it is skew-symmetric and the full subquiver of the exchange quiver of $\Sigma_\T$ consisting of the vertices associated to the exchangeable variables of $\Sigma_\T$ is mutation equivalent to an orientation of the Dynkin diagram $A_n$. We say that the cluster algebra $\A(\Sigma_\T)$, respectively the rooted cluster algebra $(\A(\Sigma_\T), \Sigma_\T)$ is {\em of finite Dynkin type $A$}.
\end{example}

\section[Rooted cluster morphisms and the category of rooted cluster algebras]{Rooted cluster morphisms and the category of rooted cluster algebras
\sectionmark{Rooted cluster morphisms}
}\label{S:Category of rooted cluster algebras}
\sectionmark{Rooted cluster morphisms}

When working with cluster algebras, it is natural to wonder what a ``morphism of cluster algebras'' should be. Intuitively we want such maps to be ring homomorphisms commuting with mutation. In \cite[Section~1.2]{FZ2}, Fomin and Zelevinsky considered what they called strong isomorphisms of cluster algebras. These are isomorphisms of rings between cluster algebras that map each seed to an isomorphic seed. This idea was generalized by Assem, Schiffler and Shramchenko in \cite{ASS} via the notion of cluster automorphisms. 
A cluster automorphism is a ring automorphism of a cluster algebra which sends a distinguished seed $\Sigma$ to another seed $f(\Sigma)$ in the mutation class of $\Sigma$, such that $f$ commutes with mutation at every variable in the two clusters. Again, only ring homomorphisms between isomorphic rings are considered. Furthermore, only cluster algebras without coefficients are considered and cluster automorphisms always bijectively map clusters to clusters: There is no way to ``delete'' cluster variables.

\subsection{Rooted cluster morphisms}

In \cite{ADS} Assem, Dupont and Schiffler introduced the notion of rooted cluster morphisms. Passing from cluster algebras to rooted cluster algebras by fixing an initial seed allows for a rigorous definition of what one means for a ring homomorphism between not necessarily ring isomorphic cluster algebras to commute with mutation.

\begin{definition}[{\cite[Definition~2.1]{ADS}}]\label{D:biadmissible}
Let $\Sigma$ and $\Sigma'$ be seeds and let $f \colon \A(\Sigma) \to \A(\Sigma')$ be a map between their associated cluster algebras. A $\Sigma$-admissible sequence $(x_1, \ldots, x_l)$ whose image $(f(x_1), \ldots, f(x_l))$ is $\Sigma'$-admis\-sible is called {\em $(f,\Sigma,\Sigma')$-biadmissible}. 
\end{definition}

\begin{definition}[{\cite[Definition~2.2]{ADS}}]\label{D:rooted cluster morphism}
Let \[\Sigma = (\XX,\ex,B) \quad \text{and} \quad \Sigma'=(\XX',\ex',B')\] be seeds and let $(\A(\Sigma),\Sigma)$ and $(\A(\Sigma'),\Sigma')$ be the corresponding rooted cluster algebras, see Definition \ref{D:rooted cluster algebra}. A {\em rooted cluster morphism} from $(\A(\Sigma),\Sigma)$ to $(\A(\Sigma'),\Sigma')$ is a ring homomorphism $f \colon \A(\Sigma) \to \A(\Sigma')$ of unital rings, i.e.\ a ring homomorphism with $f(1) = 1$, satisfying the following conditions:
\begin{itemize}
\item[CM1]{$f(\XX) \subseteq \XX' \cup \; \ZZ$.}
\item[CM2]{$f(\ex) \subseteq \ex' \cup \; \ZZ$.}
\item[CM3]{The homomorphism $f$ commutes with mutation along $(f,\Sigma,\Sigma')$\--bi\-ad\-mis\-sible sequences, i.e.\ for every $(f,\Sigma,\Sigma')$-biadmissible sequence $(x_1,\ldots, x_l)$ we have
\[
f(\mu_{x_l} \circ \ldots \circ \mu_{x_1}(y)) = \mu_{f(x_l)} \circ \ldots \circ \mu_{f(x_1)}(f(y))
\]
for all $y \in \XX$ with $f(y) \in \XX'$.
} 
\end{itemize}
\end{definition}

From now on by abuse of notation we write $\A(\Sigma)$ for the rooted cluster algebra $(\A(\Sigma), \Sigma)$. 

\begin{remark}
Every cluster automorphism in the sense of Assem, Schiffler und Shramchenko \cite{ASS} can be viewed as a rooted cluster morphism from a skew-symmetric rooted cluster algebra $\A(\Sigma)$ of finite rank and without coefficients to itself (where skew-symmetry, finite rank and having no coefficients are the assumptions in \cite{ASS} for the definition of a cluster automorphism).
Thus rooted cluster morphisms really provide a generalization of the concept of cluster automorphisms.
\end{remark}

The following example includes some of the more interesting things that can happen with rooted cluster morphisms: Firstly, they may exist between non-isomorphic rings, further we may ``delete'' cluster variables by sending them to integers and we may ``defreeze'' coefficients by sending them to exchangeable cluster variables. From now on, when we consider a seed with a skew-symmetric exchange matrix pictured as a quiver, we will mark vertices associated to coefficients with squares. 

\begin{example}\label{E:rooted cluster morphism}
Consider the seeds 
\[
\Sigma = (\{x_1,x_2,x_3\}, \{x_2,x_3\}, 
\begin{tikzpicture}[baseline={([yshift={-\ht\strutbox}]current bounding box.north)}, scale=2.5,cap=round,>=latex]
        \tikzstyle{every node}=[font=\small]
\node[rectangle, draw] (1) at (0,0) {$x_1$};
\node (2) at (1,0) {$x_2$};
\node (3) at (2,0) {$x_3$};

\draw[->] (1) -- (2);
\draw[->] (3) -- (2);
\end{tikzpicture})
\] and \[\Sigma' = (\{y_1,y_2\}, \{y_1,y_2\}, \begin{tikzpicture}[baseline={([yshift={-\ht\strutbox}]current bounding box.north)}, scale=2.5,cap=round,>=latex]
        \tikzstyle{every node}=[font=\small]
\node (1) at (0,0) {$y_1$};
\node (2) at (1,0) {$y_2$};

\draw[->] (1) -- (2);
\end{tikzpicture}).\] The associated cluster algebras are as rings isomorphic to 
\begin{align*}
\A(\Sigma) & \cong  \mathbb{Z}[x_1, x_2, x_3,\frac{x_1x_3+1}{x_2}, \frac{x_2+1}{x_3},\frac{x_1x_3+x_2+1}{x_2x_3}]\\
\A(\Sigma') & \cong  \mathbb{Z}[y_1, y_2,\frac{y_1+1}{y_2}, \frac{y_2+1}{y_1},\frac{y_1+y_2+1}{y_1y_2}].
\end{align*}
Consider the ring homomorphism $f \colon \A(\Sigma) \to \A(\Sigma')$ we obtain from the projection  of $x_3$ to $1$, which acts on the cluster variables of $\Sigma$ as $x_i \mapsto y_i$ for $i = 1,2$ and $x_3 \mapsto 1$.  This ring homomorphism satisfies axioms CM1 and CM2 by definition. The only exchangeable cluster variable in $\Sigma$ whose image is exchangeable in $\Sigma'$ is $x_2$ with $f(x_2)=y_2$, so the first entry of every $(f,\Sigma,\Sigma')$-biadmissible sequence has to be $x_2$. 
We have
\[
f(\mu_{x_2}(x_2)) = f\left(\frac{x_1x_3 + 1}{x_2}\right) = \frac{y_1+1}{y_2} = \mu_{y_2}(y_2) = \mu_{f(x_2)}(f(x_2))
\]
and, since $f(x_i) \neq f(x_2)$ for $i = 1,3$, we have \[f(\mu_{x_2}(x_i)) = f(x_i) = \mu_{f(x_2)}(f(x_i)).\]
Furthermore, the only exchangeable cluster variable in $\mu_{x_2}(\Sigma)$ whose image is exchangeable in $\mu_{y_2}(\Sigma')$ is  $\mu_{x_2}(x_2)$ with $f(\mu_{x_2}(x_2)) = \mu_{y_2}(y_2)$, so all $(f,\Sigma,\Sigma')$-biadmissible sequences have alternating entries $x_2$ and $\mu_{x_2}(x_2)$. Since mutation is involutive (see Remark \ref{R:involutive}), the ring homomorphism $f$ commutes with mutation along any of these sequences. Thus axiom CM3 is satisfied and $f$ is a rooted cluster morphism.
\end{example}

The following proposition shows that the conditions for a map $f \colon \A(\Sigma) \to \A(\Sigma')$ to be a rooted cluster morphism are preserved under mutation along biadmissible sequences. 

\begin{proposition}\label{P:rooted cluster morphism biadmissible}
Let $\Sigma$ and $\Sigma'$ be seeds and let $f \colon \A(\Sigma) \to \A(\Sigma')$ be a rooted cluster morphism. Then for every $(f,\Sigma,\Sigma')$-biadmissible sequence $(x_1, \ldots, x_l)$, the map $f$ induces a rooted cluster morphism $f \colon \A(\tilde{\Sigma}) \to \A(\tilde{\Sigma}')$ between the rooted cluster algebras with initial seeds $\tilde{\Sigma} = \mu_{x_l} \circ \ldots \circ \mu_{x_1}(\Sigma)$ and $\tilde{\Sigma}' = \mu_{f(x_l)} \circ \ldots \circ \mu_{f(x_1)}(\Sigma')$.
\end{proposition}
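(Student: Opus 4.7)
The underlying rings are unchanged: since $\Sigma$ and $\tilde{\Sigma}$ are mutation equivalent, Remark \ref{R:coefficients in every seed} yields $\F_{\tilde{\Sigma}} = \F_\Sigma$ and $\A(\tilde{\Sigma}) = \A(\Sigma)$, and analogously on the target side. Thus $f$ is the same ring homomorphism throughout, and the task is purely to verify that it satisfies axioms CM1--CM3 with respect to the new initial seeds. My plan is to first reduce to a single mutation by induction on $l$: if $(x_1,\ldots,x_l)$ is $(f,\Sigma,\Sigma')$-biadmissible, then the very definition of biadmissibility makes the tail $(x_l)$ an $(f,\mu_{x_{l-1}}\circ\cdots\circ\mu_{x_1}(\Sigma),\mu_{f(x_{l-1})}\circ\cdots\circ\mu_{f(x_1)}(\Sigma'))$-biadmissible sequence, so after applying the inductive hypothesis the problem collapses to a single mutation at some $x\in\ex$ with $f(x)\in\ex'$.

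For CM1 in this case, I would split $\mu_x(\XX) = (\XX\setminus\{x\})\cup\{\mu_x(x)\}$. The element $\mu_x(x)$ maps to $\mu_{f(x)}(f(x))\in\mu_{f(x)}(\XX')$ directly by CM3 for the original $f$. For $y\in\XX\setminus\{x\}$, if $f(y)\in\ZZ$ there is nothing to check; if $f(y)\in\XX'$ I apply CM3 to the biadmissible length-one sequence $(x)$ to obtain $f(y)=f(\mu_x(y))=\mu_{f(x)}(f(y))$, and then need to observe that this forces $f(y)\neq f(x)$. This is the one delicate point: if $f(y)$ equalled $f(x)$, the identity would read $f(x)=\mu_{f(x)}(f(x))$, which is impossible in the purely transcendental ambient field $\F_{\Sigma'}$ because the right-hand side is a Laurent expression properly involving $f(x)^{-1}$ and so cannot coincide with the variable $f(x)$. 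Hence $f(y)\in\XX'\setminus\{f(x)\}\subseteq\mu_{f(x)}(\XX')$. CM2 follows by the same argument restricted to $\ex\setminus\{x\}$, using $f(x)\in\ex'$ to place $\mu_{f(x)}(f(x))$ in $\mu_{f(x)}(\ex')$.

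For CM3 with respect to the new seeds, I take an $(f,\mu_x(\Sigma),\mu_{f(x)}(\Sigma'))$-biadmissible sequence $(y_1,\ldots,y_m)$; prepending $x$ yields an $(f,\Sigma,\Sigma')$-biadmissible sequence $(x,y_1,\ldots,y_m)$. Writing an arbitrary element of $\mu_x(\XX)$ as $\mu_x(y)$ with $y\in\XX$, the hypothesis $f(\mu_x(y))\in\mu_{f(x)}(\XX')$ forces $f(y)\in\XX'$ by the case analysis just given, so CM3 for the original $f$ applied to the extended biadmissible sequence produces
\[
f(\mu_{y_m}\circ\cdots\circ\mu_{y_1}\circ\mu_x(y)) = \mu_{f(y_m)}\circ\cdots\circ\mu_{f(y_1)}\circ\mu_{f(x)}(f(y)) = \mu_{f(y_m)}\circ\cdots\circ\mu_{f(y_1)}(f(\mu_x(y))),
\]
where the final equality rewrites $\mu_{f(x)}(f(y))$ as $f(\mu_x(y))$ via CM3 on $(x)$. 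I expect the algebraic-independence step distinguishing $f(x)$ from $\mu_{f(x)}(f(x))$ to be the only substantial obstacle; the rest unfolds mechanically from the definition of biadmissibility and from a single clean application of CM3 to the prepended sequence.
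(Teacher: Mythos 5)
Your proof is correct and follows essentially the same route as the paper's: both verify CM1--CM3 for the mutated seeds by combining the original morphism's axioms with the observation that prepending the given biadmissible sequence to any new biadmissible sequence yields a biadmissible sequence for the original pair of seeds. The only cosmetic differences are your induction reducing everything to a single mutation (the paper handles the full sequence in one pass) and your explicit algebraic-independence argument ruling out $f(y)=f(x)$, a step the paper sidesteps by noting that $\mu_{f(x_l)}\circ\cdots\circ\mu_{f(x_1)}(f(x))$ lies in the mutated cluster by the very definition of mutation (your argument is in substance the first assertion of Proposition \ref{P:almost injective}).
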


\begin{proof}
Because $\Sigma$ and $\tilde{\Sigma}$, respectively $\Sigma'$ and $\tilde{\Sigma}'$, are mutation equivalent, we have $\A(\Sigma) = \A(\tilde{\Sigma})$ and $\A(\Sigma') = \A(\tilde{\Sigma}')$ as algebras, so $f \colon \A(\tilde{\Sigma}) \to \A(\tilde{\Sigma}')$ is well-defined as a ring homomorphism. Let ${\Sigma} = ({\XX}, {\ex}, {B})$ and ${\Sigma}' = ({\XX}', {\ex}', {B}')$ and let $\tilde{\Sigma} = (\tilde{\XX}, \tilde{\ex}, \tilde{B})$ and $\tilde{\Sigma}' = (\tilde{\XX}', \tilde{\ex}', \tilde{B}')$. Then every element $\tilde{x}$ of $\tilde{\XX}$ (respectively of $\tilde{\ex}$) is of the form $\tilde{x} = \mu_{x_l} \circ \ldots \circ \mu_{x_1}(x)$ for an $x \in \XX$ (respectively $x \in \ex$). If $f(x) \in \mathbb{Z}$, then because  $(x_1, \ldots, x_l)$ is $(f,\Sigma,\Sigma')$-biadmissible we have $x \neq x_i$ for all $1 \leq i \leq l$. Thus $\tilde{x} = x$ and $f(\tilde{x}) = f(x) \in \mathbb{Z}$.
On the other hand, if $f(x) \notin \mathbb{Z}$, then by axiom CM1 (respectively CM2) we have $f(x) \in \XX'$ (respectively $f(x) \in \ex'$). Thus by axiom CM3 for $f \colon \A(\Sigma) \to \A(\Sigma')$ we have
\[
f(\tilde{x}) = \mu_{f(x_l)} \circ \ldots \circ \mu_{f(x_1)}(f(x))
\]
which lies in $\tilde{\XX}'$ (respectively in $\tilde{\ex}'$). Thus $f \colon \A(\tilde{\Sigma}) \to \A(\tilde{\Sigma}')$ satisfies axioms CM1 and CM2. 
Because $\tilde{\ex} = \mu_{x_l} \circ \ldots \circ \mu_{x_1}(\ex)$ and $\tilde{\ex}' = \mu_{f(x_l)} \circ \ldots \circ \mu_{f(x_1)}(\ex')$, every $(f,\tilde{\Sigma},\tilde{\Sigma}')$-biadmissible sequence $(y_1, \ldots, y_m)$ gives rise to a $(f,\Sigma,\Sigma')$-biadmissible sequence $(x_1, \ldots, x_l, y_1, \ldots, y_m)$. Let now $\tilde{y} \in \tilde{\XX}$ be such that $f(\tilde{y}) \in \tilde{\XX}'$. We have $\tilde{y} = \mu_{x_l} \circ \ldots \circ \mu_{x_1}(y)$ for a $y \in \XX$. If $f(y) \in \mathbb{Z}$, then with the same argument as above we have $\tilde{y} = y$ and thus $f(\tilde{y}) \in \mathbb{Z}$. Therefore whenever we have $f(\tilde{y}) \in \tilde{\XX}'$ we have $f(y) \in \XX'$ and by axiom CM3 for $f \colon \A(\Sigma) \to \A(\Sigma')$ we have
\begin{align*}
f(\mu_{y_m} \circ \ldots \circ \mu_{y_1}(\tilde{y})) &= f(\mu_{y_m} \circ \ldots \circ \mu_{y_1} \circ \mu_{x_l} \circ \ldots \circ \mu_{x_1}(y)) \\ &= \mu_{f(y_m)} \circ \ldots \circ \mu_{f(y_1)} \circ \mu_{f(x_l)} \circ \ldots \circ \mu_{f(x_1)}(f(y)) \\ &=\mu_{f(y_m)} \circ \ldots \circ \mu_{f(y_1)} (f(\tilde{y}))
\end{align*}
Thus axiom CM3 is satisfied for $f \colon \A(\tilde{\Sigma}) \to \A(\tilde{\Sigma}')$.
\end{proof}

We will show in Proposition \ref{P:almost injective} that a rooted cluster morphism is quite limited in its action on exchangeable variables of the initial seed: It has to be injective on the exchangeable variables that are not being sent to integers. Furthermore, it cannot map any coefficients to the same cluster variable to which it maps an exchangeable variable. It may however send two coefficients to the same cluster variable, as long as it is careful about their exchangeable neighbours.

\begin{definition}
Let $\Sigma =  (\XX,\ex,B=(b_{vw})_{v,w \in \XX})$ be a seed and let $x \in \XX$ be a cluster variable in $\Sigma$. We call a cluster variable $y \in \XX$ {\em a neighbour of $x$ in $\Sigma$}, if $b_{xy} \neq 0$. 
\end{definition}

\begin{remark}
Note that being neighbours is a symmetric relation: For a given seed  $\Sigma =  (\XX,\ex,B)$ a cluster variable $x \in \XX$ is a neighbour of $y \in \XX$ in $\Sigma$ if and only if $y$ is a neighbour of $x$ in $\Sigma$. We then say that {\em $x$ and $y$ are neighbours in $\Sigma$}.
\end{remark}

\begin{proposition}\label{P:almost injective}
Let $\Sigma = (\XX, \ex, B=(b_{vw})_{v,w \in \XX})$ and $\Sigma' = (\XX', \ex', B')$ be seeds and let $f \colon \A(\Sigma) \to \A(\Sigma')$ be a rooted cluster morphism. If $x \neq y$ are cluster variables of $\Sigma$ with $f(x) = f(y) \in \XX'$, then both $x$ and $y$ are coefficients of $\Sigma$. In that case for any $z \in \ex$ that is a neighbour of both $x$ and $y$ in $\Sigma$ and such that $f(z) \in \ex$, the entries $b_{zx}$ and $b_{zy}$ have the same sign.
\end{proposition}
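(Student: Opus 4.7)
The plan is to prove both assertions by contradiction, using mutation together with axioms CM1--CM3 and the stability of rooted cluster morphisms under mutation along biadmissible sequences (Proposition \ref{P:rooted cluster morphism biadmissible}).

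For the first assertion, I assume for contradiction that $x \in \ex$. Since $f(x) \in \XX'$ is not an integer, axiom CM2 forces $f(x) \in \ex'$, so the one-element sequence $(x)$ is $(f,\Sigma,\Sigma')$-biadmissible and Proposition \ref{P:rooted cluster morphism biadmissible} yields an induced rooted cluster morphism $f \colon \A(\mu_{x}(\Sigma)) \to \A(\mu_{f(x)}(\Sigma'))$. Because $y \neq x$ still lies in the mutated cluster $\mu_{x}(\XX)$, axiom CM1 applied to the induced morphism forces $f(y) = f(x) \in \mu_{f(x)}(\XX') \cup \ZZ$. Since $f(x) \notin \ZZ$ and clearly $f(x) \notin \XX' \setminus \{f(x)\}$, the only option is $f(x) = \mu_{f(x)}(f(x))$. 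Writing out the exchange relation at $f(x)$ yields $f(x)^2 = P+Q$ for monomials $P,Q$ in $\XX' \setminus \{f(x)\}$, which is impossible by algebraic independence of $\XX'$ in $\mathcal{F}_{\Sigma'}$ (comparing degrees in the indeterminate $f(x)$). By symmetry the case $y \in \ex$ is also ruled out, so $x, y \in \XX \setminus \ex$.

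For the second assertion, I take $z \in \ex$ a common neighbour of $x$ and $y$ with $f(z) \in \ex'$, so $(z)$ is $(f,\Sigma,\Sigma')$-biadmissible. Axiom CM3 then gives $f(\mu_{z}(z)) = \mu_{f(z)}(f(z))$; multiplying both sides by $f(z)$ and applying the exchange relations produces the identity
\[ M_+ + M_- = N_+ + N_- \quad \text{in } \ZZ[\XX'], \]
where $M_{\pm} = \prod_{v \colon \pm b_{zv} > 0} f(v)^{\pm b_{zv}}$ and $N_{\pm} = \prod_{w \colon \pm b'_{f(z)w} > 0} w^{\pm b'_{f(z)w}}$. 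Suppose for contradiction that $b_{zx} > 0 > b_{zy}$ (the reverse case is analogous). Then the factor $f(x)^{b_{zx}}$ appears in $M_+$ and the factor $f(y)^{-b_{zy}} = f(x)^{-b_{zy}}$ appears in $M_-$, so the variable $f(x) \in \XX'$ occurs with strictly positive exponent in both $M_+$ and $M_-$.

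The crux, and where the main work lies, is to compare this with the right-hand side by expanding the identity in the polynomial ring $\mathbb{Q}[\XX' \setminus \{f(x)\}][f(x)]$. Since the index sets $\{w \colon b'_{f(z)w} > 0\}$ and $\{w \colon b'_{f(z)w} < 0\}$ are disjoint, $f(x)$ occurs in at most one of $N_+, N_-$, so the right-hand side has at most one term of strictly positive $f(x)$-degree while the left-hand side has two (possibly coinciding) such terms. Comparing coefficients at each power of $f(x)$ pins down the constant-in-$f(x)$ term on the right-hand side to $0$, which forces $N_+$ or $N_-$ to vanish, contradicting that each is a product of variables in $\XX'$ with coefficient $1$. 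The main obstacle is a careful case analysis to handle degenerate subcases, in particular the possibility that some $f(v) = 0 \in \ZZ$ for a neighbour $v \neq x,y$ of $z$, making $M_+$ or $M_-$ itself vanish; even then, matching the constant and positive-degree terms in $f(x)$ still forces $N_\pm = 0$, completing the contradiction.
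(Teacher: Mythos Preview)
Your proposal is correct and follows essentially the same strategy as the paper's proof: derive $f(x)=\mu_{f(x)}(f(x))$ for the first part and exploit divisibility of both monomials $M_\pm$ by the common image $f(x)=f(y)$ for the second.

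Two minor differences are worth pointing out. For the first assertion you route through Proposition~\ref{P:rooted cluster morphism biadmissible} and then apply CM1 to the induced morphism; the paper instead applies CM3 directly to $y$ along the biadmissible sequence $(x)$: since $y\neq x$ one has $\mu_x(y)=y$, so $f(y)=\mu_{f(x)}(f(y))$, and setting $f(y)=f(x)$ gives the same contradiction in one line. For the second assertion you cast the argument in terms of the $f(x)$-degree and run a case analysis (including the degenerate case where some $f(v)=0$); the paper simply factors $x'=f(x)=f(y)$ out of $M_+ + M_-$ and observes that $x'$ cannot divide both $N_+$ and $N_-$ since $b'_{f(z)x'}$ has a definite sign, which immediately contradicts algebraic independence. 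This phrasing absorbs your degenerate subcases automatically, since the factorisation $M_+ + M_- = x'\cdot(\ldots)$ is valid regardless of whether some other $f(v)$ vanishes. Your version is not wrong, just more laboured than necessary.
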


\begin{proof}
Let $x \in \ex$ with $f(x) \in \XX'$. We want to show that $f(y) \neq f(x)$ for every cluster variable $y \in \XX \setminus x$. By axiom CM2 we have $f(x) \in \ex'$ and the sequence $(x)$ is $(f,\Sigma,\Sigma')$-biadmissible.  Let $y \in \XX$ with $y \neq x$. If $f(y) \in \mathbb{Z}$ then we have $f(x) \neq f(y)$, thus assume $f(y) \in \XX'$. By axiom CM3 we obtain
\[
f(y) = f(\mu_x(y)) = \mu_{f(x)}(f(y)).
\]
Assume for a contradiction that $f(y) = f(x) =:z' \in \ex'$. This would imply $z' = \mu_{z'}(z')$. Writing $B' = (b'_{xy})_{x,y \in \XX'}$ we thus would have
\[
(z')^2 = z' \mu_{z'}(z') = \prod\limits _{v \in \XX' \colon b'_{z'v}>0} v^{b'_{z'v}} + \prod\limits _{v \in \XX' \colon b'_{z'v}<0} v^{-b'_{z'v}},
\]
which contradicts algebraic independence of the cluster variables in $\XX'$.

We now prove the second part of the statement. Let $x \neq y \in \XX \setminus \ex$ be coefficients of $\Sigma$. Assume for a contradiction that $f(x) = f(y) = x' \in \XX'$ and there exists a $z \in \ex$ which is a neighbour of both $x$ and $y$ in $\Sigma$ with $f(z) \in \ex'$ such that $b_{zx}$ and $b_{zy}$ have opposite signs. Without loss of generality assume $b_{zx} >0$ and $b_{zy} < 0$. 
Then we have
\begin{align*}
f(z \mu_z(z)) &= \prod\limits _{v \in \XX \colon b_{zv}>0} f(v)^{b_{zv}} + \prod\limits _{v \in \XX \colon b_{zv}<0} f(v)^{-b_{zv}} \\
&=  x' \left(\prod\limits _{x \neq v \in \XX \colon b_{zv}>0} f(v)^{b_{zv}} + \prod\limits _{y \neq v \in \XX \colon b_{zv}<0} f(v)^{-b_{zv}}\right).
\end{align*}
By axiom CM3 this has to be equal to
\begin{align*}
f(z)\mu_{f(z)}(f(z)) & = \prod\limits _{v' \in \XX' \colon b'_{f(z)v'}>0} (v')^{b'_{f(z)v'}} + \prod\limits _{v' \in \XX' \colon b'_{f(z)v'}<0} (v')^{-b'_{f(z)v'}},
 \end{align*}
and since we either have $b'_{f(z)x'} \geq 0$ or $b'_{f(z)x'} < 0$ the cluster variable $x'$ cannot divide both summands on the right hand side.
This contradicts algebraic independence of the variables in $\XX'$.
\end{proof}

%

\subsection{Ideal rooted cluster morphisms} \label{S:ideal}
An ideal rooted cluster morphism is a rooted cluster morphism $f \colon \A(\Sigma) \to \A(\Sigma')$ whose image $f(\A(\Sigma))$ is the rooted cluster algebra with initial seed $f(\Sigma)$ the image of $\Sigma$. In the discussion before \cite[Problem~2.12]{ADS} (which asks for a characterization of all ideal rooted  cluster morphisms), the authors asked whether every rooted cluster morphism was ideal. In this section we answer the question by showing that not every rooted cluster morphism is ideal.

\begin{definition}[{\cite[Definition~2.8]{ADS}}]
Let \[\Sigma = (\XX, \ex, B) \quad \text{and} \quad \Sigma' = (\XX', \ex', B' = (b'_{vw})_{v,w\in \XX'})\] be seeds and let $f \colon \A(\Sigma) \to \A(\Sigma')$ be a rooted cluster morphism. Then the image $f(\Sigma)$ of the seed $\Sigma$ under the morphism $f$ is the seed \[f(\Sigma) = (f(\XX) \cap \XX' , f(\ex) \cap \ex', f(B) = (b'_{vw})_{v,w \in f(\XX)\cap \XX'}).\]
\end{definition}

Note that the exchangeable variables of the image seed $f(\Sigma)$ all are images of exchangeable variables of $\Sigma$. We might well have exchangeable variables of $\Sigma'$ that lie in the image $f(\XX \setminus \ex)$ of the coefficients of $\Sigma$ -- these are not exchangeable variables of $f(\Sigma)$.

If $f \colon \A(\Sigma) \to \A(\Sigma')$ is a rooted cluster morphism, then the seed $f(\Sigma)$ is an example of what is called a full subseed of the seed $\Sigma'$.

\begin{definition}[{\cite[Definition~4.9]{ADS}}]\label{D:full subseed}
Let \[\Sigma' = (\XX', \ex', B'=(b'_{vw})_{v,w \in \XX'})\] be a seed. A {\em full subseed} of $\Sigma'$ is a seed $\Sigma = (\XX,\ex,B=(b_{vw})_{v,w \in \XX})$ such that
\begin{itemize}
\item{$\XX \subseteq \XX'$,}
\item{$\ex \subseteq \ex'$,}
\item{$B$ is the submatrix of $B'$ formed by the entries labelled by $\XX \times \XX$, i.e.\;for all $v,w \in \XX$ we have $b_{vw} = b'_{vw}$.}
\end{itemize}
\end{definition}

\begin{remark}
Note that while all exchangeable variables of a full subseed of $\Sigma'$ have to be exchangeable variables of $\Sigma'$, cluster variables which are coefficients in the full subseed are not necessarily coefficients in $\Sigma'$.
\end{remark}

\begin{definition}[{\cite[Definition~2.11]{ADS}}]\label{D:ideal}
A rooted cluster morphism \[f \colon \A(\Sigma) \to \A(\Sigma')\] is called {\em ideal} if its image is the rooted cluster algebra with initial seed $f(\Sigma)$, i.e.\ if $f(\A(\Sigma)) = \A(f(\Sigma))$.
\end{definition}

In \cite[Lemma~2.10]{ADS} the authors showed that the inclusion $\A(f(\Sigma)) \subseteq f(\A(\Sigma))$ holds for any rooted cluster morphism $f \colon \A(\Sigma) \to \A(\Sigma')$. We can show that the converse is not true in general. 

\begin{theorem} \label{T:ideal}
Not every rooted cluster morphism is ideal.
\end{theorem}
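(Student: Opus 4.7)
The plan is to produce an explicit counterexample: a rooted cluster morphism $f\colon \A(\Sigma)\to\A(\Sigma')$ involving a specialization that is not ideal. By Proposition \ref{P:without specialisation implies ideal}, any counterexample must send at least one cluster variable of $\Sigma$ to an integer, so the construction begins by specifying small-rank seeds $\Sigma, \Sigma'$ and a ring-theoretic map $f$ on the initial cluster $\XX$ of $\Sigma$ with at least one such specialization. The target $\Sigma'$ is chosen just rich enough that the images of all cluster variables of $\A(\Sigma)$ land inside $\A(\Sigma')$, so that $f$ extends to a well-defined ring homomorphism on the whole cluster algebra via the Laurent phenomenon.

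Next, I would verify the axioms CM1, CM2 and CM3 of Definition \ref{D:rooted cluster morphism}. Axioms CM1 and CM2 are immediate from the specification of $f$ on $\XX$. For CM3, the key observation is that a specialized cluster variable $x\in\ex$ with $f(x)\in\mathbb{Z}$ cannot occur as the first entry of any $(f,\Sigma,\Sigma')$-biadmissible sequence (since $f(x)\notin\ex'$), so CM3 reduces to a finite combinatorial check on the non-specialized exchangeable variables, and by Proposition \ref{P:rooted cluster morphism biadmissible} it suffices to check the axiom at depth one.

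I would then compute the image seed $f(\Sigma) = (f(\XX)\cap\XX',\, f(\ex)\cap\ex',\, f(B))$ and the rooted cluster algebra $\A(f(\Sigma))$. The crucial feature is that $f(\Sigma)$ \emph{drops} the specialized variables together with their incident arrows in the exchange matrix, so the mutation class of $f(\Sigma)$ sees strictly less combinatorial information than the mutations in $\Sigma'$ involving those variables do; hence the arithmetic/divisibility constraints defining $\A(f(\Sigma))\subseteq\mathcal{F}_{f(\Sigma)}$ are stricter than those defining the relevant subring of $\A(\Sigma')$.

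Finally, I would exhibit a cluster variable $z \in \A(\Sigma)$ obtained through a mutation sequence that involves a specialized variable, such that $f(z) \in \A(\Sigma')$ but $f(z) \notin \A(f(\Sigma))$. Combined with the inclusion $\A(f(\Sigma))\subseteq f(\A(\Sigma))$ from \cite[Lemma~2.10]{ADS}, the witness $f(z)$ certifies the strict inclusion $\A(f(\Sigma))\subsetneq f(\A(\Sigma))$ and shows that $f$ is not ideal. The main obstacle is the tension between these two requirements: well-definedness of $f$ forces the images of cluster variables into $\A(\Sigma')$, which constrains the relationship between $\Sigma$ and $\Sigma'$, while producing a witness requires at least one such image to escape $\A(f(\Sigma))$; the delicate balancing of these opposing constraints is the central technical difficulty of the construction.
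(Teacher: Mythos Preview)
Your proposal outlines a sound strategy, but it stops short of being a proof: the entire content of Theorem~\ref{T:ideal} is the construction of a specific counterexample, and you never actually produce one. You correctly identify the constraints such an example must satisfy and you correctly anticipate that the witness $z$ will arise from a mutation through a specialized variable, but the ``delicate balancing'' you flag as the central difficulty is precisely what has to be carried out, not merely described. As written, the proposal is a recipe with no dish.

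The paper's proof is considerably simpler than your plan anticipates, and it is worth seeing why. Rather than leaving some exchangeable variables unspecialized and then performing a combinatorial check of CM3 at depth one, the paper takes a seed $\Sigma$ with a \emph{single} exchangeable variable $x$ (and two coefficients $a_1,a_2$) and specializes $x\mapsto 0$. Then there are no $(f,\Sigma,\Sigma')$-biadmissible sequences whatsoever, so CM3 is vacuous; moreover $f(\Sigma)=(\emptyset,\emptyset,\emptyset)$ and $\A(f(\Sigma))\cong\mathbb{Z}$. Well-definedness is arranged by sending the coefficients to $1$ and $-1$, so that the single exchange relation $xx'=a_1+a_2$ maps to $0=0$, leaving $x'$ free to be sent to a genuine cluster variable $y_1\in\A(\Sigma')$. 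That image $y_1$ is the witness: $f(\A(\Sigma))\cong\mathbb{Z}[y_1]\supsetneq\mathbb{Z}\cong\A(f(\Sigma))$. Note that this bypasses both parts of your plan that carry real work: there is no CM3 verification to do, and there is no need to invoke Proposition~\ref{P:rooted cluster morphism biadmissible} (which, incidentally, assumes $f$ is already a rooted cluster morphism, so using it to \emph{verify} CM3 would be circular).
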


\begin{proof}
We give an example of a rooted cluster morphism that is not ideal. Consider the seeds \[\Sigma = (\{a_1,a_2,x\}, \{x\}, 
\begin{tikzpicture}[baseline={([yshift={-\ht\strutbox}]1.north)}, scale=2.5,cap=round,>=latex]
        \tikzstyle{every node}=[font=\small]
\node[rectangle, draw] (1) at (0,0) {$a_1$};
\node (2) at (1,0) {$x$};
\node[rectangle, draw] (3) at (2,0) {$a_2$};

\draw[->] (1) -- (2);
\draw[->] (2) -- (3);
\end{tikzpicture}) \]
and
\[\Sigma' = (\{y_1,y_2\}, \{y_1,y_2\}, \begin{tikzpicture}[baseline={([yshift={-\ht\strutbox}]1.north)}, scale=2.5,cap=round,>=latex]
        \tikzstyle{every node}=[font=\small]
\node (1) at (0,0) {$y_1$};
\node (2) at (1,0) {$y_2$};

\draw[->] (1) -- (2);
\end{tikzpicture}). \]
As rings, the cluster algebras are isomorphic to 
\[\A(\Sigma) \cong \mathbb{Z}[a_1,a_2,x,x'] \big/ \langle xx' = a_1 + a_2 \rangle \]
and
\[\A(\Sigma') \cong \mathbb{Z}\bigg[y_1,y_2, \frac{1 + y_1}{y_2}, \frac{1+y_2}{y_1}, \frac{1+y_1+y_2}{y_1y_2}\bigg].\]
Consider the ring homomorphism $f \colon \A(\Sigma) \to \A(\Sigma')$ defined by the algebraic extension of the map which sends 
\begin{align*}
a_1 & \mapsto 1 & a_2 & \mapsto -1\\
x & \mapsto 0 & x' & \mapsto y_1.
\end{align*}
Because $f(xx') = 0 = f(a_1+ a_2)$ this is well-defined. Furthermore, it satisfies the axioms CM1 and CM2 for a rooted cluster morphism and because there are no $(f, \Sigma, \Sigma')$-biadmissible sequences it trivially satisfies axiom CM3 and thus is a rooted cluster morphism. The image of the seed $\Sigma$ is $f(\Sigma) = (\emptyset, \emptyset, \emptyset)$ and thus as a ring we have $\A(f(\Sigma)) \cong \mathbb{Z}$. However, the image of the cluster algebra $\A(\Sigma)$ is $f(\A(\Sigma)) \cong \mathbb{Z}[y_1]$.
\end{proof}

\subsection{The category of rooted cluster algebras}
Considering rooted cluster algebras and rooted cluster morphisms gives rise to a category.

\begin{definition}[{\cite[Definition~2.6]{ADS}}]
The {\em category of rooted cluster algebras $Clus$} is the category which has as objects rooted cluster algebras and as morphisms rooted cluster morphisms.
\end{definition}

In \cite[Section~2]{ADS} it was shown that $Clus$ satisfies the axioms of a category. In particular, axiom CM2 for rooted cluster morphisms is necessary to ensure that compositions of rooted cluster morphisms are again rooted cluster morphisms, as the following example illustrates.

\begin{example}
Consider the seeds 
\begin{align*}
\Sigma_1 &= \left( \{x_1,x_2,x_3\}, \{x_2\},\begin{tikzpicture}[scale=2.5,cap=round,>=latex, baseline={([yshift={-\ht\strutbox}]x_1.north)}]
        \tikzstyle{every node}=[font=\small]) 
        \node[rectangle, draw] (x_1) at (0,0) {$x_1$};
        \node (x_2) at (1,0) {$x_2$};
         \node[rectangle, draw] (x_3) at (2,0) {$x_3$};
         \draw[->] (x_1) -- (x_2);
         \draw[->] (x_2) -- (x_3);
         \end{tikzpicture} \right), \\
\Sigma_2 &= \left( \{z\}, \emptyset,\begin{tikzpicture}[scale=2.5,cap=round,>=latex, baseline={([yshift={-\ht\strutbox}]x_1.north)}]
        \tikzstyle{every node}=[font=\small]) \node[rectangle, draw] at (0,0) {$z$}; \end{tikzpicture} \right) \\ 
\Sigma_3 &= (\{y_1,y_2\}, \{y_1,y_2\}, \xymatrix{y_1 \ar[r] & y_2})
\end{align*} 
with associated cluster algebras \[\A(\Sigma_1) = \mathbb{Z}[x_1,x_2,x_3, \frac{x_1+x_3}{x_2}], \quad A(\Sigma_2) = \ZZ[z]\] and \[\A(\Sigma_3) = \ZZ[y_1,y_2,\frac{1+y_2}{y_1}, \frac{1+y_1}{y_2}, \frac{1+y_1+y_2}{y_1y_2}].\] Consider the ring homomorphisms $f \colon \A(\Sigma_1) \to \A(\Sigma_2)$, which is defined by sending $x_i \mapsto z$ for all $i = 1,2,3$, and $g \colon \A(\Sigma_2) \to \A(\Sigma_3)$ defined by sending $z \mapsto y_1$. Both $f \colon \A(\Sigma_1) \to \A(\Sigma_2)$ and $g \colon \A(\Sigma_2) \to \A(\Sigma_3)$ satisfy axiom CM1, but $f$ does not satisfy axiom CM2. Axiom CM3 is satisfied trivially by both $f$ and $g$, since there are neither $(f, \Sigma_1, \Sigma_2)$ nor $(g, \Sigma_2, \Sigma_3)$-biadmissible sequences. However, the composition $g \circ f$ does not satisfy axiom CM3: Consider the $(g \circ f, \Sigma_1, \Sigma_3)$-biadmissible sequence $(x_2)$. We have
\[
g \circ f (\mu_{x_2}(x_2)) = g \circ f \left(\frac{x_1 + x_3}{x_2}\right) = g(2) = 2
\]
but
\[
\mu_{g \circ f(x_2)}(g \circ f(x_2)) = \mu_{y_1}(y_1) = \frac{1 + y_2}{y_1}.
\]
\end{example}

\subsection{Coproducts and connectedness of seeds} \label{ss:Connectedness of seeds and coproducts}

Assem, Dupont and Schiffler showed in \cite[Lemma~5.1]{ADS} that countable coproducts exist in the category $Clus$ of rooted cluster algebras. Taking coproducts of a countable family $\{\A(\Sigma_i)\}_{i \in I}$ of rooted cluster algebras amounts to taking what can be intuitively described as the disjoint union $\Sigma$ of their seeds. The seeds $\Sigma_i$ will be full subseeds of the seed $\Sigma$ which are mutually disconnected.

\begin{definition}\label{D:connected}
Let $\Sigma = (\XX,\ex,B)$ be a seed. A sequence $x_0, x_1, \ldots, x_{l}$ of cluster variables in $\XX$ with $l  \geq 0$ such that for any $0 \leq i < l$ the cluster variables $x_i$ and $x_{i+1}$ are neighbours in $\Sigma$ is called a {\em path of length $l$ in $\Sigma$.} We call two cluster variables $x,y \in \XX$ {\em connected in $\Sigma$}, if there exists a path $x_0, \ldots, x_l$ of finite length $l \geq 0$ in $\Sigma$ such that $x = x_0$ and $y = x_l$. We call the seed $\Sigma$ {\em connected} if any two cluster variables $x,y \in \XX$ are connected in $\Sigma$. We call a rooted cluster algebra $\A(\Sigma)$ {\em connected} if its initial seed $\Sigma$ is connected.
\end{definition}


We can decompose any seed into its connected components. For a seed \[\Sigma = (\XX, \ex, B = (b_{vw})_{v,w \in \XX})\] and an element $x \in \XX$ of its cluster, the {\em connected component of $x$ in $\Sigma$} is the full connected subseed $\Sigma_x$ of $\Sigma$ consisting of those cluster variables in $\XX$ that are connected to $x$ and such that all coefficients in $\Sigma_x$ are also coefficients in $\Sigma$, i.e.\ 
\[
\Sigma_x = (\XX_x, \ex \cap \XX_x, B_x = (b_{vw})_{v,w \in \XX_x}),
\]
where $\XX_x = \{y \in \XX \mid x \text{ and }y \text{ are connected in } \Sigma\}$.
The decomposition is given as follows: Let $\{\Sigma_j = (\XX_j, \ex_j, B_j = (b^j_{vw})_{v,w \in \XX_j})\}_{j \in I}$ for a countable index set $I$ be the set of mutually distinct connected components in $\Sigma$. Since no vertex in $\XX_i$ is connected to any vertex in $\XX_j$ for $i \neq j \in I$, we have $b_{xy} = 0$ for $x \in \XX_i$ and $y \in \XX_j$ with $i \neq j \in I$. Thus we have $\XX = \bigcup_{j \in I} \XX_j$ and $\ex = \bigcup_{j \in I} \ex_j$, and the matrix $B$ has the matrices $B_j$ for $j \in I$ as block-diagonal entries, i.e.\;$b_{vw} = b^j_{vw}$ if $v,w \in \XX_j$ for some $j \in I$ and $b_{vw} = 0$ otherwise. 

\begin{remark}
It is a direct consequence of the definition of matrix mutation (cf.\ Definition \ref{D:mutation of a seed}) that mutation of seeds respects connected components.
\end{remark}

Conversely we can build a new seed from a countable collection of seeds by taking the disjoint union of the clusters and the exchangeable variables and constructing a big matrix which contains all of their exchange matrices as block-diagonal entries:  Let $\{\Sigma_j = (\XX_j, \ex_j, B_j)\}_{ j \in I}$ be a countable collection of seeds. Denote by $\bigsqcup$ the disjoint union and set \[\bigsqcup_{j \in I} \Sigma_j := (\bigsqcup_{j \in I} \XX_j, \bigsqcup_{j \in I} \ex_j, B), \] where $B$ is the block-diagonal matrix with blocks $B_j$ for $j \in I$. The analogous construction for rooted  cluster algebras is taking coproducts; by \cite[Lemma~5.1]{ADS}, the category $Clus$ of rooted cluster algebras admits countable coproducts $\coprod$ and for a countable index set $I$ we have \[\coprod_{j \in I}\A(\Sigma_j) \cong \A(\bigsqcup_{j \in I} \Sigma_j).\]
The seeds $\Sigma_j$ for $j \in I$ are mutually disconnected full subseeds of $\Sigma$. On the other hand, since we can decompose any given seed into its connected components and there are only countably many cluster variables, given a rooted cluster algebra $\A(\Sigma)$ we can write it as a countable coproduct of connected rooted cluster algebras.

\begin{remark}\label{R:connected implies countable}
If we omit the countability assumption for clusters of seeds (cf.\ Remark \ref{R:countability}), then uncountable coproducts exist: This follows directly from the proof of \cite[Lemma~5.1]{ADS}, where the countability assumption is solely needed for the cluster of the coproduct to be countable. All of the other arguments go through directly.

In fact, having uncountably many connected components is the only way for a seed to have an uncountable cluster; if a seed is connected, then the fact that it has a countable cluster is automatic: Let $\Sigma = (\XX, \ex, B)$ be a connected seed and let $x \in \XX$. For $l \geq 0$ we set \[\XX_l = \{y \in \XX \mid \text{ $x$ and $y$ are connected by a path of length $l$ in $\Sigma$}\}.\] Because $B$ is locally finite, for each $l  \geq 0$, the set $\XX_l$ is finite and because $\Sigma$ is connected, we have $\XX = \bigcup_{l \geq 0} \XX_l$. Therefore, $\XX$ is countable.

Thus every connected component of a seed has a countable cluster.  As a consequence, one does not currently gain much from considering uncountable clusters. The usual operations on seeds, namely mutations along (finite) admissible sequences, affect only finitely many connected components and hence only operate on a countable full subseed which is not connected to its invariant complement. So for all practical purposes one can restrict to working with countable seeds without any substantial loss of generality.
\end{remark}

Let us consider again the example of a countable triangulation $\T$ of the closed disc with marked points $\Z \subseteq S^1$. Note that by definition of the seed $\Sigma_\T$ associated to $\T$ two cluster variables $\alpha, \beta \in \T$ are neighbours in $\Sigma_\T$ if and only if the arcs $\alpha$ and $\beta$ are sides of a common triangle in $\T$ and they are connected in $\Sigma_\T$ if and only if there exists a $k \geq 0$ and a sequence of arcs $\gamma_0, \ldots, \gamma_k$, such that $\alpha = \gamma_0$ and $\beta = \gamma_k$ and for all $0 \leq i \leq k$ the arcs $\gamma_i$ and $\gamma_{i+1}$ are sides of a common triangle in $\T$. It turns out that the connected components of $\Sigma_\T$ depend on the behaviour of arcs in $\T$ in the neighbourhood of limit points of $\Z$.

\begin{definition}\label{D:fountain}
Let $\Z \subseteq S^1$. We say that a sequence $\{z_i\}_{i \in \mathbb{Z}_{\geq 0}}$ of points in $\Z$ converging to $z$ converges to $z \in S^1$ {\em from the right}, if for any $x \in S^1$ the set $[x,z) \cap \Z$ is infinite and the set $(z,x] \cap \Z$ is finite. We say that it converges to $z \in S^1$ {\em from the left}, if for any $x \in S^1$ the set $[x,z) \cap \Z$ is finite and the set $(z,x] \cap \Z$ is infinite. We say that it converges to $z \in S^1$ {\em from both sides}, if for any $x \in S^1$ both the set $[x,z) \cap \Z$ and the set $(z,x] \cap \Z$ are infinite.

Let $\{a_i,b_i\}_{i \in \mathbb{Z}_{\geq 0}}$ be a sequence of arcs of $\Z$ and let the sequence of endpoints converge to $a = \varinjlim a_i \in S^1$  and $b = \varinjlim b_i \in S^1$. If both sequences of endpoints $\{a_i\}_{i \in \mathbb{Z}_{\geq 0}}$ and $\{b_i\}_{i \in \mathbb{Z}_{\geq 0}}$ are non-constant, we say that the sequence $\{a_i,b_i\}_{i \in \mathbb{Z}_{\geq 0}}$ of arcs is a {\em nest} if $a = b$ and we say that it is a {\em half-nest} if $a \neq b$.

If the sequence $\{a_i\}_{i \in \mathbb{Z}_{\geq 0}}$ is constant and the sequence $\{b_i\}_{i \in \mathbb{Z}_{\geq 0}}$ is non-constant, we say that the sequence  $\{a_i,b_i\}_{i \in \mathbb{Z}_{\geq 0}}$ of arcs is a {\em right-fountain at $a$ converging to $b$}, if $\{b_i\}_{i \in \mathbb{Z}_{\geq 0}}$ converges to $b$ from the right, we say that it is a {\em left-fountain  at $a$ converging to $b$}, if $\{b_i\}_{i \in \mathbb{Z}_{\geq 0}}$ converges to $b$ from the left and we say that it is a {\em fountain  at $a$ converging to $b$}, if $\{b_i\}_{i \in \mathbb{Z}_{\geq 0}}$ converges to $b$ from both sides. We call a sequence $\{a_i, b_i\}_{i \in \ZZ_{\geq 0}}$ of arcs in $\Z$ a {\em split fountain converging to $b$}, if it can be partitioned into a left fountain $\{a_l, b_{i}\}_{i \in \ZZ_{< 0}}$ at $a_l \in \Z$ converging to $b \in S^1$ and a right fountain $\{a_r, b_i\}_{i \in \ZZ_{\geq 0}}$ at $a_r \in \Z$ converging to $b$ with $a_l \neq a_r$.
\end{definition}

To determine the connected components of the seed $\Sigma_\T$ associated to a given countable triangulation $\T$ of the closed disc with marked points $\Z$ it is helpful to view any half-nest, fountain and right-or left-fountain $\{a_i,b_i\}_{i \in \mathbb{Z}_{\geq 0}}$ as converging  to an arc $\{a,b\}$ of the topological closure $\overline{\Z}$ of $\Z \subseteq S^1$, where  $a = \varinjlim a_i$  and $b = \varinjlim b_i$.  Let $\T$ be a triangulation of the closed disc with marked points $\Z$ and let $\{a,b\}$ be an arc of $\overline{\Z}$ such that there is a half-nest, fountain and right-or left-fountain in $\T$ converging $\{a,b\}$. Then we call $\{a,b\}$ a {\em limit arc of $\T$}. Figure \ref{fig:fountains} provides an illustration of a left-fountain, a right-fountain and a fountain, while Figure \ref{fig:nests} illustrates a half-nest and a nest.

\begin{figure}
\centering{
\begin{tikzpicture}[scale = 2, font = \footnotesize, font = \sffamily, font=\sansmath\sffamily, cap=round,>=latex]
        \tikzstyle{every node}=[font=\small]
	\node at (60:0.6cm){$a$};
	\node at (220:0.6) {$b$};
	\node at (220:0.5){$\bullet$};

	\draw[black!20] (220:0.5) -- (60:0.5);
	
	\draw[] (60:0.5) -- (210:0.5);
	\draw[] (60:0.5) -- (200:0.5);
	\draw[] (60:0.5) -- (180:0.5);
	\draw[] (60:0.5) -- (195:0.5);
	\draw[] (60:0.5) -- (205:0.5);
	\draw[] (60:0.5) -- (210:0.5);
	\draw[] (60:0.5) -- (150:0.5);

	\draw (60:0.5cm) -- (180:0.5);

	\draw[dotted] (215:0.4) -- (205:0.4);
        	\draw (0,0) circle(0.5cm);
  \end{tikzpicture}
\begin{tikzpicture}[scale = 2, font = \footnotesize, font = \sffamily, font=\sansmath\sffamily, cap=round,>=latex]
        \tikzstyle{every node}=[font=\small]
	\node at (60:0.6cm){$a$};
	\node at (220:0.6) {$b$};
	\node at (220:0.5){$\bullet$};
	
	\draw[black!20] (220:0.5) -- (60:0.5);

	\draw[] (60:0.5) -- (230:0.5);
	\draw[] (60:0.5) -- (235:0.5);
	\draw[] (60:0.5) -- (245:0.5);
	\draw[] (60:0.5) -- (250:0.5);
	\draw[] (60:0.5) -- (260:0.5);

	\draw[dotted] (235:0.4) -- (220:0.4);
        	\draw (0,0) circle(0.5cm);
  \end{tikzpicture}
\begin{tikzpicture}[scale = 2, font = \footnotesize, font = \sffamily, font=\sansmath\sffamily, cap=round,>=latex]
        \tikzstyle{every node}=[font=\small]
	\node at (60:0.6cm){$a$};
	\node at (220:0.6) {$b$};
	\node at (220:0.5){$\bullet$};
	
	\draw[] (60:0.5) -- (210:0.5);
	\draw[] (60:0.5) -- (200:0.5);
	\draw[] (60:0.5) -- (180:0.5);
	\draw[] (60:0.5) -- (195:0.5);
	\draw[] (60:0.5) -- (205:0.5);
	\draw[] (60:0.5) -- (210:0.5);
	\draw[] (60:0.5) -- (150:0.5);

	\draw[black!20] (220:0.5) -- (60:0.5);
	
	\draw (60:0.5cm) -- (180:0.5);

	\draw[] (60:0.5) -- (230:0.5);
	\draw[] (60:0.5) -- (235:0.5);
	\draw[] (60:0.5) -- (245:0.5);
	\draw[] (60:0.5) -- (250:0.5);
	\draw[] (60:0.5) -- (260:0.5);

	\draw[dotted] (235:0.4) -- (220:0.4);
	\draw[dotted] (215:0.4) -- (205:0.4);
        	\draw (0,0) circle(0.5cm);
  \end{tikzpicture}
}
\caption{Triangulations of the closed disc consisting of (from left to right) a right fountain at $a \in \Z$, a left fountain at $a$ and a fountain at $a$, all converging to the limit arc $\{a,b\}$ of the respective triangulation}\label{fig:fountains}
\end{figure}
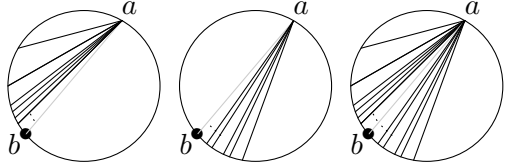

\begin{figure}
\centering{
\begin{tikzpicture}[scale = 2, font = \footnotesize, font = \sffamily, font=\sansmath\sffamily, cap=round,>=latex]
        \tikzstyle{every node}=[font=\small]
	\node at (60:0.6cm){$a$};
	\node at (200:0.6) {$b$};
	\node at (60:0.5){$\bullet$};
	\node at (200:0.5){$\bullet$};
	
	\draw[] (55:0.5) -- (225:0.5);
	\draw[] (225:0.5) -- (45:0.5);
	\draw[] (45:0.5) -- (230:0.5);
	\draw[] (30:0.5) -- (240:0.5);
	\draw[] (240:0.5) -- (25:0.5);
	\draw[] (240:0.5) -- (20:0.5);
	\draw[] (240:0.5) -- (10:0.5);
	\draw[] (10:0.5) -- (260:0.5);
	\draw[] (55:0.5) -- (210:0.5);

	\draw[black!20] (200:0.5) -- (60:0.5);
	\draw[dotted] (130:0.1) -- (130:0.2);

        	\draw (0,0) circle(0.5cm);
  \end{tikzpicture}
\begin{tikzpicture}[scale = 2, font = \footnotesize, font = \sffamily, font=\sansmath\sffamily, cap=round,>=latex]
\tikzstyle{every node}=[font=\small]
        
	\node at (60:0.6cm){$a$};
	\node at (60:0.5){$\bullet$};
	\draw[dotted] (65:0.4) -- (65:0.5);
	
	\draw[] (98:0.5) -- (30:0.5);
	\draw[] (20:0.5) -- (110:0.5);
	\draw[] (110:0.5) -- (15:0.5);
	\draw[] (15:0.5) -- (120:0.5);
	\draw[] (120:0.5) -- (10:0.5);
	\draw[] (10:0.5) -- (130:0.5);
	\draw[] (10:0.5) -- (135:0.5);
	\draw[] (10:0.5) -- (140:0.5);
	\draw[] (140:0.5) -- (5:0.5);

        	\draw (0,0) circle(0.5cm);
  \end{tikzpicture}
}
\caption{A triangulation of the closed disc consisting of a half nest converging to the limit arc $\{a,b\}$ of the triangulation, and a nest where the sequence of endpoints converges to $a$} \label{fig:nests}
\end{figure}
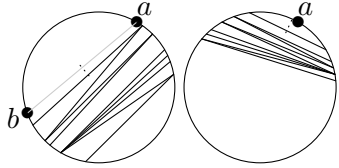

\begin{lemma}\label{L:connected components of triangulation}
Let $\T$ be a countable triangulation of the closed disc with marked points $\Z \subseteq S^1$. Two arcs $\{x_0,x_1\} \neq \{y_0,y_1\}$ are connected in $\Sigma_\T$ if and only if there is no limit arc $\{a,b\}$ of $\T$
such that $x_0,x_1 \in [a,b]$ and $y_0,y_1 \in [b,a]$ or vice-versa.
\end{lemma}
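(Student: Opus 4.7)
The plan is to prove each direction separately using topological arguments about the triangulation.

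For the forward direction (``only if''), I argue the contrapositive: given a limit arc $\{a,b\}$ of $\T$ with $x_0, x_1 \in [a,b]$ and $y_0, y_1 \in [b,a]$, I show $\{x_0,x_1\}$ and $\{y_0,y_1\}$ lie in different connected components of $\Sigma_\T$. The key is a \emph{stability lemma}: if $\gamma = \{u,v\} \in \T$ has $u, v \in [a,b]$, then any neighbour $\gamma'$ of $\gamma$ in $\Sigma_\T$ also has both endpoints in $[a,b]$. Since $\gamma$ and $\gamma'$ are sides of a common triangle in $\T$ with third vertex $w \in \Z$, it suffices to show $w \in [a,b]$. In the generic case when $u, v \in (a,b)$ strictly and $w \in (b,a)$ strictly, one of the triangle's remaining arcs transversally crosses the chord $\{a,b\}$, hence must cross all but finitely many arcs of the sequence in $\T$ converging to $\{a,b\}$, contradicting the pairwise non-crossing property. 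Boundary cases where $u$ or $v$ equals $a$ or $b$ are handled by case analysis on the type of limit (half-nest, fountain, or left/right-fountain at $a$ or $b$): in each case the specific fountain or nest arcs block any triangle straddling $\{a,b\}$. Iterating the stability lemma, every arc in the connected component of $\{x_0,x_1\}$ has both endpoints in $[a,b]$, so $\{y_0,y_1\}$, with endpoints in $[b,a]$ and distinct from $\{x_0,x_1\}$, lies in a different component.

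For the backward direction (``if''), I argue by contradiction: assuming $\{x_0,x_1\}$ and $\{y_0,y_1\}$ are not connected in $\Sigma_\T$, I construct a separating limit arc. Choose a simple continuous path $\pi\colon [0,1] \to \overline{D}_2$ from the midpoint of $\{x_0,x_1\}$ to the midpoint of $\{y_0,y_1\}$, lying in the open disc and generic with respect to $\T$ (avoiding all marked points and crossing each arc of $\T$ transversally in at most one point); such a path exists by standard transversality. List the arcs of $\T$ crossed by $\pi$ in the order of crossing as $\gamma_0 = \{x_0,x_1\}, \gamma_1, \gamma_2, \ldots$. Between two consecutive crossings, $\pi$ traverses the interior of a single triangle of $\T$, so $\gamma_i$ and $\gamma_{i+1}$ are sides of a common triangle, hence neighbours in $\Sigma_\T$. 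If the sequence is finite, ending at $\{y_0,y_1\}$, we obtain a finite path in $\Sigma_\T$ connecting the two arcs, contradicting non-connectedness. Thus the sequence is infinite; by compactness of $[0,1]$ the crossing parameters accumulate at some $t^\ast \in (0,1)$, and extracting convergent subsequences the endpoints of $\gamma_i$ converge to $a, b \in S^1$. Since the chords $\gamma_i$ pass through crossing points accumulating at the interior point $\pi(t^\ast)$, we must have $a \neq b$, and the extracted sequence of arcs falls into one of the types in Definition \ref{D:fountain}, so $\{a,b\}$ is a limit arc of $\T$. Since $\{x_0,x_1\}$ and $\{y_0,y_1\}$ do not transversally cross $\{a,b\}$ (being non-crossing with the converging sequence) but are connected by $\pi$ across $\{a,b\}$, they lie on opposite sides of $\{a,b\}$, giving the desired separating limit arc and contradicting the hypothesis.

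The main obstacle lies in handling the boundary cases in the forward direction, where arcs touch the limit arc's endpoints $a$ or $b$: the different limit types (fountain at $a$ versus half-nest versus split fountain) require separate but parallel analyses to rule out triangles straddling $\{a,b\}$. A secondary subtlety in the backward direction is ensuring the extracted sequence of crossed arcs falls into one of the categories in Definition \ref{D:fountain} rather than degenerating into a nest; this is guaranteed by placing the accumulation point in the interior of the open disc, which forces $a \neq b$.
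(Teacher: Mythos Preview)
Your forward direction is essentially the paper's argument spelled out with more care; both show that no finite chain of adjacent triangles can pass across a limit arc.

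Your backward direction takes a genuinely different route. The paper constructs the separating limit arc \emph{combinatorially}: starting from $\{a_0,b_0\}=\{x_0,x_1\}$ it repeatedly replaces $\{a_{i-1},b_{i-1}\}$ by the side of the adjacent triangle lying towards $\{y_0,y_1\}$, choosing $\{a_i,b_i\}$ so that $a_i\in[y_1,a_{i-1}]$ and $b_i\in[b_{i-1},y_0]$. Non-connectedness forces this walk never to reach $\{y_0,y_1\}$, and the monotone bounded endpoint sequences converge, directly producing a half-nest, fountain or one-sided fountain of the required kind.

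Your topological path argument has a genuine gap. You assume that after each crossing $\gamma_i$ there is a well-defined next crossing $\gamma_{i+1}$ and that the region traversed between them is a single triangle of $\T$. Neither need hold for infinite triangulations: the complement of the arcs in the open disc can contain non-triangular ``limit regions'' bounded on one side by an arc of $\T$ and on the others by infinitely many arcs accumulating toward a chord not in $\T$. The split fountain of Remark~\ref{R:exchangeable arcs} illustrates this: a path leaving $\{i,-i\}$ toward the limit point $1$ enters such a region, the arcs it subsequently meets accumulate from the right so there is no first one, and even when a next crossed arc exists it shares no triangle with $\gamma_i$. The argument is repairable---the appearance of such a region already exhibits the desired limit arc---but as written the key sentence ``between two consecutive crossings, $\pi$ traverses the interior of a single triangle of $\T$'' is false. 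A second, smaller gap is your claim that the extracted convergent subsequence ``falls into one of the types in Definition~\ref{D:fountain}'': those types demand monotone one-sided convergence of the endpoints, which an arbitrary subsequence of crossed arcs need not have. The paper's combinatorial walk builds this monotonicity in from the outset and so sidesteps both issues.
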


\begin{proof}
First assume that there exists
a limit arc $\{a,b\}$ of $\T$. It is straightforward to check that there cannot be a finite sequence of arcs connecting any arc with endpoints in $[a,b]$ with a distinct arc with endpoints in $[b,a]$, since there are infinitely many arcs from the right-or left-fountain, fountain or half-nest in $\T$ converging to the limit arc $\{a,b\}$ of $\T$ in between. 

On the other hand, assume that $\{x_0,x_1\}$ and  $\{y_0,y_1\}$ are not connected. Without loss of generality let $(x_0,x_1) \subseteq (x_0,y_0) \subseteq (x_0,y_1)$. We can construct a sequence of arcs $\{a_i,b_i\}_{i \in \mathbb{Z}_{\geq 0}}$ by setting $a_0 = x_0$ and $b_0 = x_1$ and for $i \geq 1$ choosing $\{a_i,b_i\}$ such that $\{a_{i-1},b_{i-1}\}$ and $\{a_i,b_i\}$ are sides of a common triangle in $\T$ and such that $a_i \in [y_1, a_{i-1}]$ and $b_i \in [b_{i-1},y_0]$. Because $\{x_0,x_1\}$ and $\{y_0,y_1\}$ are not connected, $a_i$ and $b_i$ are well-defined for all $i \geq 0$ and at least one of the sequences $\{a_i\}_{i \in \ZZ}$ and $\{b_i\}_{i \in \ZZ}$ is not constant.  Both sequences of endpoints $\{a_i\}_{i \in \mathbb{Z}_{\geq 0}}$ and $\{b_i\}_{i \in \mathbb{Z}_{\geq 0}}$ are monotone and bounded above and below and thus $\{a_i,b_i\}_{i \in \mathbb{Z}_{\geq 0}}$ is a half-nest, fountain or right-or left-fountain converging to a limit arc $\{a,b\}$ of $\T$
such that $x_0,x_1 \in [a,b]$ and $y_0,y_1 \in [b,a]$.
\end{proof}

\begin{remark}\label{R:connected components of triangulation}
For a given countable triangulation $\T$ with marked points $\Z \subseteq S^1$ the limit arcs $\{a,b\}$ of $\T$ partition the seed $\Sigma_\T$ associated to $\T$ into connected components.  If a limit arc $\{a,b\}$ of $\T$ is not an arc of $\Z$ (in particular this is always the case if $\Z$ is discrete) then it divides $\Sigma_\T$ into two mutually disconnected components. If the limit arc $\{a,b\}$ of $\T$ is an arc of $\Z$, then it is an arc in $\T$ (because it cannot cross any arc of $\T$) and it provides an additional connected component, consisting only of the arc $\{a,b\}$ itself. Figure \ref{fig:partition} provides an illustration of the partition of a triangulation into connected components.
\end{remark}

\begin{figure}
\centering{
\begin{tikzpicture}[scale = 2, font = \footnotesize, font = \sffamily, font=\sansmath\sffamily,cap=round,>=latex]
        \tikzstyle{every node}=[font=\small]
	\node at (220:0.5){$\bullet$};
	\node at (130:0.5){$\bullet$};
	\node at (35:0.5){$\bullet$};
	\node at (340:0.5){$\bullet$};

	\draw[black!20] (60:0.5) -- (120:0.5);
	\draw[black!20] (60:0.5) -- (110:0.5);
	\draw[black!20] (60:0.5) -- (105:0.5);
	\draw[black!20] (60:0.5) -- (100:0.5);
	\draw[black!20] (60:0.5) -- (90:0.5);

	\draw[black!20] (60:0.5) -- (230:0.5);
	\draw[black!20] (60:0.5) -- (235:0.5);
	\draw[black!20] (60:0.5) -- (245:0.5);
	\draw[black!20] (60:0.5) -- (250:0.5);
	\draw[black!20] (60:0.5) -- (260:0.5);

	\draw[black!20] (60:0.5) -- (215:0.5);
	\draw[black!20] (60:0.5) -- (135:0.5);
	\draw[black!20] (60:0.5) -- (150:0.5);
	\draw[black!20] (60:0.5) -- (160:0.5);
	\draw[black!20] (60:0.5) -- (180:0.5);
	\draw[black!20] (60:0.5) -- (185:0.5);
	\draw[black!20] (60:0.5) -- (190:0.5);
	\draw[black!20] (60:0.5) -- (200:0.5);
	\draw[black!20] (60:0.5) -- (210:0.5);

	\draw (220:0.5) -- (60:0.5);
	\draw (130:0.5) -- (60:0.5);

	\draw[black!20] (270:0.5) -- (60:0.5);
	\draw[black!20] (270:0.5) -- (50:0.5);
	\draw[black!20] (270:0.5) -- (45:0.5);
	\draw[black!20] (270:0.5) -- (40:0.5);
	\draw (270:0.5) -- (35:0.5);
	\draw[black!20] (270:0.5) -- (30:0.5);
	\draw[black!20] (270:0.5) -- (20:0.5);

	\draw[black!20] (270:0.5) -- (15:0.5);
	\draw[black!20] (280:0.5) -- (15:0.5);
	\draw[black!20] (280:0.5) -- (5:0.5);
	\draw[black!20] (300:0.5) -- (5:0.5);
	\draw[black!20] (320:0.5) -- (0:0.5);
	\draw[black!20] (320:0.5) -- (350:0.5);
	\draw[black!20] (330:0.5) -- (350:0.5);

        	\draw[black!20] (0,0) circle(0.5cm);
  \end{tikzpicture}
  }
\caption{Partition of a triangulation $\T$ of the closed disc with marked points $\Z \subseteq S^1$ into connected components; arcs in $\T$ are drawn in grey, limit arcs of $\T$ are drawn in black and limit points of $\Z$ are marked by bullets} \label{fig:partition}
\end{figure}
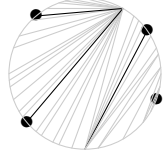

\begin{lemma}\label{L:connected triangulation}
Let $\T$ be a countable triangulation of the closed disc with marked points $\Z \subseteq S^1$ and let $\Sigma_\T$ be the associated seed. Then the rooted cluster algebra $\A(\Sigma_\T)$ is isomorphic to a coproduct of connected rooted cluster algebras associated to countable triangulations of the closed disc.
\end{lemma}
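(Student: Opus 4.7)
The plan is to combine the decomposition of $\Sigma_\T$ into its connected components with the construction of countable coproducts in $Clus$. By the discussion preceding Remark \ref{R:connected implies countable}, the seed $\Sigma_\T$ is the disjoint union of its connected components $\{C_i\}_{i \in I}$, yielding an isomorphism $\A(\Sigma_\T) \cong \coprod_{i \in I} \A(C_i)$ in $Clus$. Since $\T$ is countable, so is $I$. It therefore suffices to exhibit, for each $i \in I$, a countable triangulation $\T_i$ of a closed disc such that $C_i \cong \Sigma_{\T_i}$.

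Appealing to Lemma \ref{L:connected components of triangulation} and Remark \ref{R:connected components of triangulation}, the components fall into two types: (a) singletons $\{\{a,b\}\}$ where $\{a,b\}$ is a limit arc of $\T$ lying in $\T$, and (b) the collections of arcs of $\T$ contained in a closed region $R_i \subseteq \overline{D}_2$ cut out by limit arcs. In case (a), I would take $\T_i$ to be the (degenerate) triangulation of the closed disc with two marked points $\{a,b\}$ and the single edge $\{a,b\}$, whose associated seed is a single coefficient matching $C_i$.

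For case (b), I would define $\Z_i \subseteq S^1$ as the set of endpoints of arcs in $C_i$ and view $R_i$ abstractly as a closed disc, with $\Z_i$ placed on its boundary in the cyclic order inherited from $S^1$; the triangulation $\T_i$ is then the collection of arcs of $C_i$ (now viewed as arcs of $\Z_i$). To verify $\Sigma_{\T_i} \cong C_i$ as seeds, the clusters agree by construction; the exchange quivers agree because any triangle of $\T$ with a side in $C_i$ has all three sides in $C_i$ (they are pairwise neighbours, hence connected), and these triangles are precisely the triangles of $\T_i$; and exchangeability is preserved because any quadrilateral in $\T$ with diagonal in $C_i$ has all four sides in $C_i$, forming a quadrilateral in $\T_i$. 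Countability of $\T_i$ is inherited from $\T$.

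The subtlest step will be handling arcs that are internal in $\T$ but become edges in $\T_i$ (for example, an arc lying along a bounding limit chord of $R_i$, like $\{i,-i\}$ in the example of Remark \ref{R:exchangeable arcs}). One must verify such arcs are coefficients in both seeds: they are coefficients in $\Sigma_{\T_i}$ by construction, and they are never exchangeable in $\Sigma_\T$ because the limit sequence bounding $R_i$ prevents them from being the diagonal of any quadrilateral in $\T$. Additionally, when a bounding limit arc of $R_i$ already lies in $\T$, its endpoints must be excluded from $\Z_i$ unless they are endpoints of some arc in $C_i$, so as not to introduce spurious coefficients that would be double-counted against the standalone component from case (a). Once these verifications are in place, assembling everything yields the desired decomposition $\A(\Sigma_\T) \cong \coprod_{i \in I} \A(\Sigma_{\T_i})$.
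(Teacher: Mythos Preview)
Your proposal is correct and follows the same strategy as the paper: partition $\Sigma_\T$ into connected components via the limit arcs of $\T$ (Lemma \ref{L:connected components of triangulation} and Remark \ref{R:connected components of triangulation}) and then apply the coproduct decomposition from Section \ref{ss:Connectedness of seeds and coproducts}. The paper's own proof is considerably terser than yours---it simply records the partition into components $\{\Sigma_{\T_i}\}_{i\in I}$ and cites the coproduct discussion, without explicitly constructing the triangulations $\T_i$ or verifying the seed isomorphisms---so your treatment of the two cases and the boundary subtleties fills in precisely what the paper leaves to geometric intuition.
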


\begin{proof}
The limit arcs of $\T$ partition the seed $\Sigma_\T$ associated to $\T$ into countably many (since $\T$ is countable) connected components $\{\Sigma_{\T_i}\}_{i \in I}$ for some countable index set $I$. 
Thus by the discussion in Section \ref{ss:Connectedness of seeds and coproducts} the rooted cluster algebra $\A(\Sigma_{\T})$ is isomorphic to the countable coproduct $\coprod_{i \in I}\A(\Sigma_{\T_i})$.
\end{proof}

\subsection{Isomorphisms of rooted cluster algebras} 
An isomorphism of rooted cluster algebras implies a close combinatorial relation between their initial seeds. First, we introduce some useful terminology.

Let $\Sigma = (\XX, \ex, B)$ be a seed. We say that two cluster variables $x,y \in \XX$ are {\em connected by exchangeable variables in $\Sigma$} if there exists a path $x_0, x_1, \ldots, x_{l}$ of finite length $l \geq 0$ in $\Sigma$ (see Definition \ref{D:connected}), such that $x = x_0$, $y = x_{l}$, and $x_1, \ldots, x_{l-1}$ lie in $\ex$. Further, if $l \in \{0,1\}$ then at least one of $x_0$ and $x_l$ has to lie in $\ex$. Thus, two coefficients that are neighbours are not necessarily connected by exchangeable variables and a coefficient is not necessarily connected to itself by exchangeable variables. For an exchangeable variable $x \in \ex$ we define the {\em exchangeably connected component of $x$ in $\Sigma$} to be the full subseed $\Sigma^{ex}_x = (\XX^{ex}_x, \ex \cap \XX^{ex}_x, B_x = (b_{vw})_{v,w \in \XX^{ex}_x})$ of $\Sigma$ where
\[
\XX^{ex}_x = \{y \in \XX \mid x \text{ and $y$ are connected by exchangeable variables in $\Sigma$}\}.
\]

Partitioning a seed into its exchangeably connected components can be useful when studying mutations of a seed; mutations within an exchangeably connected component leave all other exchangeably connected components unchanged.

\begin{remark}\label{R:mutation of exchangeably connected components}
We can decompose any seed $\Sigma = (\XX, \ex, B)$ into its exchangeably connected components $\{\Sigma_j = (\XX_j, \ex_j, B_j = (b^j_{vw})_{v,w \in \XX_j})\}_{j \in I}$, where $I$ is a countable index set and $\ex = \sqcup_{j \in I} \ex_j$. Mutation along a $\Sigma_j$-admissible sequence leaves all other exchangeably connected components unchanged (up to entries of the exchange matrix labelled by coefficients), i.e. if $(x_1, \ldots, x_l)$ is a $\Sigma_i$-admissible sequence and $y \in \ex_j$ with $i \neq j \in I$ we have
\[(\mu_{x_l} \circ \ldots \circ \mu_{x_1}(\Sigma))_y = (\XX_j, \ex_j, \tilde{B}_j = (\tilde{b}^j_{vw})_{v,w \in \XX_j}),\]
where $\tilde{b}^j_{vw} = b^j_{vw}$ for all $v,w \in \ex^j$.
This follows directly from the fact that mutation at an exchangeable variable $x$ of $\Sigma$ only affects entries in the exchange matrix that are labelled by neighbours of $x$ in $\Sigma$. No entries that are labelled by exchangeable variables of any other exchangeably connected component are affected. 

In particular, the same holds true for connected components rather than just exchangeably connected components: Mutation in one connected component does not affect any other connected component, since two cluster variables in different connected components are necessarily in different exchangeably connected components. 
\end{remark}

\begin{definition}
We call two seeds $\Sigma = (\XX, \ex, B = (b_{vw})_{v,w \in \XX})$ and $\Sigma' = (\XX',\ex',B'=(b'_{vw})_{v,w \in \XX'})$ similar, if there exists a bijection $\varphi \colon \XX \to \XX'$ restricting to a bijection $\varphi \colon \ex \to \ex'$ such that for every exchangeable variable $x \in \XX$ the exchangeably connected component $\Sigma^{ex}_x$ of $x$ in $\Sigma$ is isomorphic (cf.\ Definition \ref{D:seed}) to the exchangeably connected component $\Sigma^{ex}_{\varphi(x)}$ of $\varphi(x)$ in $\Sigma'$ or to its opposite seed $(\Sigma^{ex}_{\varphi(x)})^{op}$. 
\end{definition}

\begin{theorem}\label{T:similar}
The rooted cluster algebras $\A(\Sigma)$ and $\A(\Sigma')$ are isomorphic if and only if  their initial seeds $\Sigma$ and $\Sigma'$ are similar.
\end{theorem}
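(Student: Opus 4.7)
The plan is to prove the two implications separately: the forward direction builds a rooted cluster isomorphism out of the bijection witnessing the similarity, while the backward direction extracts such a bijection from a given rooted cluster algebra isomorphism.

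For the direction ``similar implies isomorphic'', let $\varphi\colon\XX\to\XX'$ be the bijection witnessing similarity. I extend $\varphi$ uniquely to a $\mathbb{Z}$-algebra isomorphism $\tilde\varphi\colon\mathcal{F}_{\Sigma}\to\mathcal{F}_{\Sigma'}$ of ambient fields and claim it restricts to a rooted cluster isomorphism $\A(\Sigma)\to\A(\Sigma')$. Axioms CM1 and CM2 are immediate. For CM3 applied to a single biadmissible mutation $(x)$ with $x\in\ex$, similarity gives that the restriction of $\varphi$ to $\XX^{ex}_x$ is either a seed isomorphism onto the exchangeably connected component of $\varphi(x)$ in $\Sigma'$ or a seed isomorphism onto its opposite; in both cases the exchange matrix entries agree up to a common sign $\epsilon\in\{\pm 1\}$, and sign reversal merely swaps the two monomials in the numerator of the exchange relation without altering their sum. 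To iterate over arbitrary biadmissible sequences I use two ingredients: matrix mutation commutes with sign reversal, $\mu_x(-B)=-\mu_x(B)$ entrywise (direct from the formula), and by Remark \ref{R:mutation of exchangeably connected components} mutation in one exchangeably connected component leaves all others invariant. Together these show that after one biadmissible step the target seeds remain similar under the same bijection, and the claim follows by induction. An inverse rooted cluster morphism is obtained from $\varphi^{-1}$ by the same argument.

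For the direction ``isomorphic implies similar'', let $f\colon\A(\Sigma)\to\A(\Sigma')$ be an isomorphism in $Clus$, and note that its inverse $f^{-1}$ is also a rooted cluster morphism. Step 1: I show $f$ sends no cluster variable to an integer, since otherwise $x=f^{-1}(f(x))\in\mathbb{Z}$ would contradict $x$ being an indeterminate in the ambient field; combined with CM1 and CM2 for both $f$ and $f^{-1}$, this gives bijections $\varphi:=f|_\XX\colon\XX\to\XX'$ and $\ex\to\ex'$. Step 2: for each $x\in\ex$, applying CM3 to the length-one biadmissible sequence $(x)$ equates the numerators of the two exchange relations,
\[
\prod_{b_{xv}>0}\varphi(v)^{b_{xv}}+\prod_{b_{xv}<0}\varphi(v)^{-b_{xv}}=\prod_{b'_{\varphi(x)w'}>0}(w')^{b'_{\varphi(x)w'}}+\prod_{b'_{\varphi(x)w'}<0}(w')^{-b'_{\varphi(x)w'}}.
\]
Algebraic independence of $\XX'$ together with unique factorisation, modulo the freedom to swap the two monomials, yields a sign $\epsilon_x\in\{\pm 1\}$ with $b'_{\varphi(x)\varphi(v)}=\epsilon_x b_{xv}$ for every $v\in\XX$. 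Step 3: I show $\epsilon_x$ is constant on the exchangeable variables of each exchangeably connected component. If $x,y\in\ex$ are neighbours, skew-symmetrizability forces $b_{xy},b_{yx}$ and $b'_{\varphi(x)\varphi(y)},b'_{\varphi(y)\varphi(x)}$ to have opposite signs within each pair, and comparing the identities from Step 2 at both $x$ and $y$ yields $\epsilon_x=\epsilon_y$.

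It then remains, in Step 4, to verify that $\varphi$ carries each exchangeably connected component $\XX^{ex}_x$ bijectively onto the exchangeably connected component of $\varphi(x)$ in $\Sigma'$ and that the restriction gives the required seed isomorphism or isomorphism to the opposite. Transport of paths with internal exchangeable vertices under $\varphi$ is immediate from Step 2, since for $z\in\ex$ the relation $b_{zv}\neq 0$ is preserved. Matching of exchange matrix entries $b'_{\varphi(v)\varphi(w)}=\epsilon\, b_{vw}$ inside the component is given directly by Step 2 whenever at least one of $v,w$ is exchangeable. The main obstacle I anticipate is the case in which both $v$ and $w$ are coefficients lying in the same exchangeably connected component: the entry $b_{vw}$ never enters any exchange relation of $\A(\Sigma)$ and hence is not directly controlled by $f$. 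Resolving this is the subtlest point and will likely require either exploiting structural constraints on coefficient-to-coefficient entries within a common exchangeably connected component, or an independent combinatorial adjustment on pairs of coefficients to pin these entries down.
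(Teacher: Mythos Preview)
Your overall strategy coincides with the paper's. For the forward direction the paper defers to the characterisation in Theorem~\ref{T:rooted cluster morphisms without specializations}, whose proof amounts to your inductive verification of CM3 together with the observations $\mu_x(-B)=-\mu_x(B)$ and Remark~\ref{R:mutation of exchangeably connected components}; it then checks that $\varphi$ and $\varphi^{-1}$ are mutual inverses by induction on the length of admissible sequences. For the backward direction the paper quotes \cite[Corollary~3.2]{ADS} to obtain the bijection $\varphi=f|_{\XX}\colon\XX\to\XX'$ (your Step~1) and then simply invokes condition~(3) of Theorem~\ref{T:rooted cluster morphisms without specializations}, which is exactly the content of your Steps~2--3.

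Your reservation in Step~4 is justified, and it is worth noting that the paper's proof does not address it either: condition~(3) of Theorem~\ref{T:rooted cluster morphisms without specializations} constrains only rows of $B'$ labelled by images of \emph{exchangeable} variables, so it says nothing about an entry $b'_{\varphi(v)\varphi(w)}$ when $v$ and $w$ are both coefficients lying in the same exchangeably connected component. Concretely, take $\XX=\XX'=\{x,a,b\}$, $\ex=\ex'=\{x\}$, and skew-symmetric exchange matrices with $b_{xa}=b_{xb}=b'_{xa}=b'_{xb}=1$, $b_{ab}=1$, $b'_{ab}=-1$. Both seeds yield the cluster algebra $\mathbb{Z}[x,a,b,(ab+1)/x]$ and the identity map is a rooted cluster isomorphism, yet its restriction to $\Sigma^{ex}_x$ is neither a seed isomorphism nor an isomorphism to the opposite: the exchangeable row forces the sign $+1$, while $b'_{ab}=-b_{ab}$. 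Similarity still holds in this example, but only via the bijection fixing $x$ and swapping $a$ and $b$, which is \emph{not} $f|_{\XX}$. So the ``independent combinatorial adjustment on pairs of coefficients'' you anticipate is genuinely required, and the paper's one-line appeal to condition~(3) glosses over precisely this point; completing the argument means allowing the bijection witnessing similarity to differ from $f|_{\XX}$ on coefficients.
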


This statement can be derived from \cite[Section~3]{ADS}. However, we consider the case where a seed might consist of several exchangeably connected components, so for the convenience of the reader we give a short proof.

\begin{proof}
Let $\Sigma = (\XX, \ex, B = (b_{vw})_{v,w \in \XX})$ and $\Sigma' = (\XX', \ex', B' = (b'_{vw})_{v,w \in \XX'})$ be similar via a bijection $\varphi \colon \XX \to \XX'$. It involves some calculations to check that $\varphi$ induces a rooted cluster morphism; in the interest of not giving a rather technical argument twice, we refer to a result that will be proved in Section \ref{S:Behaviour of rooted cluster morphisms without specializations}: In Theorem \ref{T:rooted cluster morphisms without specializations} we give three necessary and sufficient conditions for a map between clusters of seeds to give rise to a rooted cluster morphism. It is straightforward to check that $\varphi$ satisfies all of these: Since it is a bijection restricting to a bijection from $\ex$ to $\ex'$ it satisfies conditions (1) and (2). It satisfies condition (3) because for every two exchangeable cluster variables $x$ and $y$ in the same exchangeably connected component of $\Sigma$ we have $b'_{\varphi(x)w'} = b_{x\varphi^{-1}(w')}$ and $b'_{\varphi(y)w'} = b_{y\varphi^{-1}(w')}$ or $b'_{\varphi(x)w'} = -b_{x\varphi^{-1}(w')}$ and $b'_{\varphi(y)w'} = -b_{y\varphi^{-1}(w')}$  for all $w' \in \XX'$ . Thus it induces a rooted cluster morphism $f \colon \A(\Sigma) \to \A(\Sigma')$. For the same reasons, the inverse $\varphi^{-1} \colon \XX' \to \XX$ induces a rooted cluster morphism $g \colon \A(\Sigma') \to \A(\Sigma)$. It remains to check that $f$ and $g$ are mutual inverses as rooted cluster morphisms.

Let $x$ be a cluster variable in $\A(\Sigma)$. It is of the form $x = \mu_{x_l} \circ \ldots \circ \mu_{x_1}(y)$ for some $y \in \XX$ and a $\Sigma$-admissible sequence $(x_1, \ldots, x_l)$. By induction on the length $l$ of the admissible sequence, we show that $g \circ f(x) = x$ and thus $g \circ f$ is the identity on $\A(\Sigma)$: If $l = 0$ we have $g \circ f (x) = \varphi^{-1} \circ \varphi(x) = x$. If $g \circ f$ is the identity on all cluster variables which can be written as a mutation along a $\Sigma$-admissible sequence of length $l-1$, then in particular $g \circ f (\mu_{x_{l-1} \circ \ldots \circ \mu_{x_1}}(y)) = \mu_{x_{l-1}} \circ \ldots \circ \mu_{x_1} (y)$ and $g \circ f (x_{l}) = x_l$ and $(x_1, \ldots, x_l)$ is $(g\circ f, \Sigma, \Sigma)$-biadmissible. Thus by axiom CM3 for $g \circ f$ we have 
\begin{align*}
g \circ f(x) &= \mu_{g \circ f(x _{l})} \circ \ldots \circ \mu_{g \circ f(x _{1})}(g \circ f (y)) \\
&= \mu_{x_l} \circ \mu_{g \circ f(x _{l-1})} \circ \ldots \circ \mu_{g \circ f(x_1)}(g \circ f (y)) \\
&= \mu_{x_l}(g \circ f (\mu_{x_{l}} \circ \ldots \circ \mu_{x_1}(y))) \\
&= \mu_{x_l} \circ \ldots \circ \mu_{x_1} (y) = x.
\end{align*}
The argument for $f \circ g$ being the identity on $\A(\Sigma')$ is symmetric and thus $f$ is a rooted cluster isomorphism.

On the other hand, if $f \colon \A(\Sigma) \to \A(\Sigma')$ is a rooted cluster isomorphism then by \cite[Corollary~3.2]{ADS} it induces a bijection $\varphi \colon \XX \to \XX'$. By condition (3) of Theorem \ref{T:rooted cluster morphisms without specializations} it follows that $\Sigma$ and $\Sigma'$ are similar under this bijection.
\end{proof}

\subsection{Rooted cluster morphisms without specializations} \label{S:Behaviour of rooted cluster morphisms without specializations}

The definition of rooted cluster morphisms (see Definition \ref{D:rooted cluster morphism}) allows cluster variables to be sent to integers. Sending a cluster variable to an integer is called a {\em specialization}. Given the seeds $\Sigma = (\XX, \ex, B)$ and $\Sigma' = (\XX', \ex', B')$ we call a rooted cluster morphism $f \colon \A(\Sigma) \to \A(\Sigma')$ a {\em rooted cluster morphism without specializations}, if $f(\XX) \subseteq \XX'$, i.e.\ if all cluster variables get sent to cluster variables. 

\begin{lemma}\label{L:biadmissible}
Let $f \colon \A(\Sigma) \to \A(\Sigma')$ be a rooted cluster morphism without specializations. Then every $\Sigma$-ad\-missi\-ble sequence is $(f,\Sigma,\Sigma')$-biadmissible.
\end{lemma}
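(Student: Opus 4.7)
The plan is to proceed by induction on the length $l$ of the $\Sigma$-admissible sequence $(x_1, \ldots, x_l)$, leveraging Proposition \ref{P:rooted cluster morphism biadmissible} which lets us pass to the mutated seeds at each step.

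For the base case $l = 1$, by definition $x_1 \in \ex$, so by axiom CM2 we have $f(x_1) \in \ex' \cup \ZZ$. Since $f$ has no specializations, $f(x_1) \in \XX'$, and hence $f(x_1) \in \ex'$. Thus $(f(x_1))$ is a $\Sigma'$-admissible sequence of length one, so $(x_1)$ is $(f,\Sigma,\Sigma')$-biadmissible.

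For the inductive step, suppose that $(x_1, \ldots, x_{l-1})$ is $(f,\Sigma,\Sigma')$-biadmissible. By Proposition \ref{P:rooted cluster morphism biadmissible}, $f$ induces a rooted cluster morphism $f \colon \A(\tilde{\Sigma}) \to \A(\tilde{\Sigma}')$ where $\tilde{\Sigma} = \mu_{x_{l-1}} \circ \ldots \circ \mu_{x_1}(\Sigma)$ and $\tilde{\Sigma}' = \mu_{f(x_{l-1})} \circ \ldots \circ \mu_{f(x_1)}(\Sigma')$. The key observation is that this induced morphism also has no specializations: every cluster variable $\tilde{y} \in \tilde{\XX}$ has the form $\tilde{y} = \mu_{x_{l-1}} \circ \ldots \circ \mu_{x_1}(y)$ for some $y \in \XX$, and since $f(y) \in \XX'$ by hypothesis, axiom CM3 (as used in the proof of Proposition \ref{P:rooted cluster morphism biadmissible}) gives $f(\tilde{y}) = \mu_{f(x_{l-1})} \circ \ldots \circ \mu_{f(x_1)}(f(y)) \in \tilde{\XX}'$.

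Since $(x_1, \ldots, x_l)$ is $\Sigma$-admissible, the last entry $x_l$ lies in $\tilde{\ex}$. Applying axiom CM2 for the induced rooted cluster morphism $f \colon \A(\tilde{\Sigma}) \to \A(\tilde{\Sigma}')$, we get $f(x_l) \in \tilde{\ex}' \cup \ZZ$, and the no-specialization property of the induced morphism forces $f(x_l) \in \tilde{\ex}'$. This is precisely the condition that $(f(x_1), \ldots, f(x_l))$ is $\Sigma'$-admissible, completing the induction. There is no real obstacle here; the work was already done in Proposition \ref{P:rooted cluster morphism biadmissible}, and the statement becomes essentially a direct consequence of CM2 once we observe that the no-specialization property is preserved under mutation along a biadmissible prefix.
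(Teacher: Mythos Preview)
Your proof is correct and follows essentially the same inductive approach as the paper's. The only difference is cosmetic: the paper applies axiom CM3 directly to the element $x_{l+1} = \mu_{x_l}\circ\cdots\circ\mu_{x_1}(y)$ with $y\in\ex$ (using CM2 to get $f(y)\in\ex'$), whereas you first package the same computation as the statement that the induced morphism $f\colon\A(\tilde\Sigma)\to\A(\tilde\Sigma')$ from Proposition~\ref{P:rooted cluster morphism biadmissible} is again without specializations, and then invoke CM2 for that induced morphism. The underlying calculation is identical.
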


\begin{proof}
We prove the claim by induction on the length $l$ of a $\Sigma$-admissible sequence. It is satisfied trivially for sequences of length $l=0$ . Assume now that it is satisfied for all $\Sigma$-admissible sequences of lengths at most $l \geq 0$ and let $(x_1, \ldots, x_{l+1})$ be a $\Sigma$-admissible sequence of length $l+1$. By Definition \ref{D:admissible sequence} we have $x_{l+1} = \mu_{x_l} \circ \ldots \circ \mu_{x_1}(y)$ for some $y \in \ex$ and thus -- because, by induction hypothesis, $(x_1, \ldots, x_l)$ is $(f,\Sigma,\Sigma')$-biadmissible --  $f(x_{l+1}) = f(\mu_{x_{l}} \circ \ldots \circ \mu_{x_1}(y)) = \mu_{f(x_{l})} \circ \ldots \circ \mu_{f(x_1)}(f(y))$ by axiom CM3. By axiom CM2 $f(y) \in \ex'$ so we have $f(x_{l+1}) \in \mu_{f(x_{l})} \circ \ldots \circ \mu_{f(x_1)}(ex')$ and thus $(x_1, \ldots, x_{l+1})$ is $(f, \Sigma, \Sigma')$-biadmissible.
\end{proof}

Lemma \ref{L:biadmissible} helps us to further understand ideal rooted cluster morphisms (see Definition \ref{D:ideal}): In \cite[Problem~2.12]{ADS}, Assem, Dupont and Schiffler asked for a classification of ideal rooted cluster morphisms. We provide a partial answer via the following consequence.

\begin{proposition}\label{P:without specialisation implies ideal}
Every rooted cluster morphism without specializations is ideal.
\end{proposition}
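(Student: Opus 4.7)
The plan is to exploit Lemma \ref{L:biadmissible}, which guarantees that every $\Sigma$-admissible sequence is biadmissible, together with a careful comparison of mutation computed in $\Sigma'$ versus in the full subseed $f(\Sigma)$. By \cite[Lemma~2.10]{ADS} we already have $\A(f(\Sigma)) \subseteq f(\A(\Sigma))$, so it suffices to prove that every cluster variable of $\A(\Sigma)$ is mapped by $f$ into $\A(f(\Sigma))$. Any such cluster variable is of the form $\mu_{x_l}\circ\cdots\circ\mu_{x_1}(y)$ for some $\Sigma$-admissible $(x_1,\ldots,x_l)$ and some $y\in\XX$; by Lemma \ref{L:biadmissible} and axiom CM3, its image under $f$ is $\mu_{f(x_l)}\circ\cdots\circ\mu_{f(x_1)}(f(y))$, a priori computed inside $\A(\Sigma')$.

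The technical heart of the argument is the following observation: for every $x\in\ex$ with $f(x)\in\ex'$ (which is automatic in the absence of specializations), every neighbour of $f(x)$ in $\Sigma'$ actually lies in $f(\XX)$. To see this, apply $f$ to the exchange relation $x\cdot\mu_x(x)=\prod_{b_{xv}>0}v^{b_{xv}}+\prod_{b_{xv}<0}v^{-b_{xv}}$ and use axiom CM3 to identify $f(\mu_x(x))$ with $\mu_{f(x)}(f(x))$. This yields an equality in $\ZZ[w\mid w\in\XX']$ between a sum of two monomials in $f(\XX)$ and a sum of two monomials in $\XX'$. Proposition \ref{P:almost injective} ensures that on the left-hand side the two monomials involve disjoint sets of variables (any collision of coefficients preserves signs), matching the disjoint-variable structure of the right-hand side. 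By algebraic independence of the cluster of $\Sigma'$, the two sums match monomial-by-monomial, which forces every $w\in\XX'$ with $b'_{f(x)w}\neq 0$ to appear on the left, i.e.\ to lie in $f(\XX)$.

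A direct consequence is that mutation of $f(x)$ computed inside the full subseed $f(\Sigma)\subseteq\Sigma'$ agrees with mutation of $f(x)$ inside $\Sigma'$: the exchange relation only refers to neighbours of $f(x)$, and these are all in $f(\XX)$. Moreover, since matrix mutation only alters entries involving $f(x)$ or pairs of its neighbours, the submatrix of $\mu_{f(x)}(B')$ on $f(\XX_1)$ coincides with $\mu_{f(x)}(f(B))$. Combined with Proposition \ref{P:almost injective} (so that $f(x_1)$ is not accidentally identified with the image of another cluster variable), this gives the seed-level identity $f(\mu_{x_1}(\Sigma))=\mu_{f(x_1)}(f(\Sigma))$.

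Now set up an induction on $l$: Proposition \ref{P:rooted cluster morphism biadmissible} shows that $f$ restricts to a rooted cluster morphism $\A(\tilde\Sigma)\to\A(\tilde\Sigma')$ along any biadmissible sequence, and this restriction is again without specializations. Applying the preceding observation at every step yields $f(\mu_{x_l}\circ\cdots\circ\mu_{x_1}(\Sigma))=\mu_{f(x_l)}\circ\cdots\circ\mu_{f(x_1)}(f(\Sigma))$, so $f(\mu_{x_l}\circ\cdots\circ\mu_{x_1}(y))$ lies in the cluster of an iterated mutation of $f(\Sigma)$ and hence in $\A(f(\Sigma))$. This gives $f(\A(\Sigma))\subseteq\A(f(\Sigma))$ and completes the proof. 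The main obstacle is the key observation of the second paragraph; once one can localize neighbours of $f(x)$ inside $f(\XX)$, the inductive propagation is routine thanks to Proposition \ref{P:rooted cluster morphism biadmissible}.
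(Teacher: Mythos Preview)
Your argument is correct and follows the same skeleton as the paper's proof: invoke Lemma~\ref{L:biadmissible} so that every $\Sigma$-admissible sequence is biadmissible, apply CM3 to write the image of a cluster variable as $\mu_{f(x_l)}\circ\cdots\circ\mu_{f(x_1)}(f(y))$, and cite \cite[Lemma~2.10]{ADS} for the inclusion $\A(f(\Sigma))\subseteq f(\A(\Sigma))$.

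The difference is in how much work is done at the crucial step. The paper, after obtaining $f(x)=\mu_{f(x_l)}\circ\cdots\circ\mu_{f(x_1)}(f(y))$ (computed in $\Sigma'$), simply asserts ``this is an element of $\A(f(\Sigma))$'' and stops. You supply the missing justification: you prove that every neighbour of $f(x)$ in $\Sigma'$ already lies in $f(\XX)$, so mutation at $f(x)$ inside the full subseed $f(\Sigma)$ agrees with mutation inside $\Sigma'$, and then you propagate this along the sequence using Proposition~\ref{P:rooted cluster morphism biadmissible}. That neighbour-localisation argument is precisely (the ``empty sum equals $0$'' case of) Lemma~\ref{L:rooted cluster morphisms restrictive}, which in the paper appears immediately \emph{after} the proposition and is not cited in its proof. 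So your version is a more careful unpacking of the same proof rather than a genuinely different route; it makes explicit a step the paper treats as evident. One small notational point: your ``$f(\XX_1)$'' should read $f(\mu_{x_1}(\XX))$ (equivalently $\mu_{f(x_1)}(f(\XX))$).
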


\begin{proof}
Let $\Sigma = (\XX, \ex, B)$ and $\Sigma' = (\XX', \ex', B')$ be seeds and let $f \colon \A(\Sigma) \to \A(\Sigma')$ be a rooted cluster morphism without specializations. Every element of $f(\A(\Sigma))$ can be written as an integer polynomial in the images of cluster variables of $\A(\Sigma)$. A cluster variable  $x \in \A(\Sigma)$ is of the form $x = \mu_{x_l} \circ \ldots \circ \mu_{x_1}(y)$ for $y \in \XX$ and a $\Sigma$-admissible sequence $(x_1, \ldots, x_l)$. Because $f$ is without specializations we have $f(y) \in \XX'$ and by Lemma \ref{L:biadmissible} $(f(x_1), \ldots, f(x_l))$ is $\Sigma'$-admissible. By axiom CM3 we obtain $f(x) = \mu_{f(x_l)} \circ \ldots \circ \mu_{f(x_1)}(f(y))$. This is an element of $\A(f(\Sigma))$ and thus $f(\A(\Sigma)) \subseteq \A(f(\Sigma))$. The other inclusion always holds and was proved in \cite[Lemma~2.1]{ADS}.
\end{proof}

Generally, if we have a rooted cluster morphism $f \colon \A(\Sigma) \to \A(\Sigma')$ the combinatorial structures of the two seeds $\Sigma$ and $\Sigma'$ are linked via those exchangeable cluster variables in the cluster of $\Sigma$ that do not get sent to integers. This provides a particularly strong combinatorial link between two rooted cluster algebras $\A(\Sigma)$ and $\A(\Sigma')$ for which there exists a rooted cluster morphism $f \colon \A(\Sigma) \to \A(\Sigma')$ without specializations.

\begin{lemma}\label{L:rooted cluster morphisms restrictive}
Let \[\Sigma = (\XX, \ex, B = (b_{vw})_{v,w \in \XX}) \quad \text{and} \quad \Sigma' = (\XX', \ex', B'=(b'_{v'w'})_{v',w' \in \XX'})\] be seeds and let $f \colon \A(\Sigma) \to \A(\Sigma')$ be a rooted cluster morphism. Let $x \in \ex$ be an exchangeable variable of $\Sigma$ with $f(x) \in \ex'$. 
Consider the exchangeably connected component \[f(\Sigma)^{ex}_{f(x)} = (f(\XX)^{ex}_{f(x)} \cap \XX', f(\ex)^{ex}_{f(x)}\cap \ex', (b'_{vw})_{v,w \in f(\XX)^{ex}_{f(x)}\cap \XX'})\] of $f(x)$ in the full subseed $f(\Sigma) \subseteq \Sigma'$. 
Then we have
\begin{align*}
b'_{f(y)v'} &= \sum\limits_{v \in \XX \colon f(v) = v'}b_{yv} \text{ for all $f(y) \in f(\ex)^{ex}_{f(x)} \cap \ex'$ and all $v' \in \XX'$ or } \\
b'_{f(y)v'} &= -\sum\limits_{v \in \XX \colon f(v) = v'}b_{yv} \text{ for all $f(y) \in f(\ex)^{ex}_{f(x)} \cap \ex'$ and all $v' \in \XX'$},
\end{align*}
where the empty sum is taken to be $0$. In particular, if $x,y \in \ex$ are exchangeable variables of $\Sigma$ with $f(x), f(y) \in \ex'$, then we have $b'_{f(x)f(y)} = \pm b_{xy}$. 
\end{lemma}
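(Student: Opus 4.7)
The plan is to first establish the claim locally at a single exchangeable variable $x \in \ex$ with $f(x) \in \ex'$, and then to propagate the sign $\epsilon_x \in \{+1,-1\}$ along paths of exchangeable neighbours in the image seed $f(\Sigma)$.

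For the local step, I would apply axiom CM3 to the $(f,\Sigma,\Sigma')$-biadmissible sequence $(x)$, yielding $f(\mu_x(x)) = \mu_{f(x)}(f(x))$. Multiplying by $f(x)$ gives
\[
\prod_{v \in \XX \colon b_{xv}>0} f(v)^{b_{xv}} + \prod_{v \in \XX \colon b_{xv}<0} f(v)^{-b_{xv}} = \prod_{v' \in \XX' \colon b'_{f(x)v'}>0} (v')^{b'_{f(x)v'}} + \prod_{v' \in \XX' \colon b'_{f(x)v'}<0} (v')^{-b'_{f(x)v'}}.
\]
Here the key technical point is to collect the factors $f(v)$ on the left by their images. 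Proposition \ref{P:almost injective} guarantees that whenever $v_1 \neq v_2$ in $\XX$ satisfy $f(v_1)=f(v_2)=v'\in \XX'$ and are both neighbours of $x$ in $\Sigma$ (with $f(x)\in \ex'$), the entries $b_{xv_1}$ and $b_{xv_2}$ share a sign. Thus all the preimages of a given $v'\in \XX'$ contribute to only one of the two monomials on the left-hand side, and the exponent of $v'$ in that monomial equals $\sum_{v \colon f(v)=v'} b_{xv}$ (where integer images of cluster variables contribute to an overall integer coefficient). Algebraic independence of $\XX'$ in $\F_{\Sigma'}$ then forces the two monomials on the left to match, in some order, the two monomials on the right. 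These two possible matchings are exactly the two sign choices $\epsilon_x = \pm 1$, and in either case we obtain $b'_{f(x)v'} = \epsilon_x \sum_{v \colon f(v)=v'} b_{xv}$ for every $v' \in \XX'$.

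For the global step, I would argue by induction on the path length in $f(\Sigma)^{ex}_{f(x)}$ that the sign $\epsilon_y$ is independent of the choice of $y \in \ex$ with $f(y) \in f(\ex)^{ex}_{f(x)}\cap \ex'$. The inductive step reduces to showing that if $y, y' \in \ex$ with $f(y), f(y')\in \ex'$ are neighbours in $\Sigma'$, then $\epsilon_y = \epsilon_{y'}$. Since $f(y), f(y') \in \ex'$, Proposition \ref{P:almost injective} implies that $y$ is the unique preimage of $f(y)$ and likewise for $y'$, so the sums $\sum_{v \colon f(v)=f(y)} b_{y'v}$ and $\sum_{v \colon f(v)=f(y')} b_{yv}$ collapse to $b_{y'y}$ and $b_{yy'}$ respectively. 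The skew-symmetrizability of $B$ and $B'$ forces $b_{yy'}$ and $b_{y'y}$, as well as $b'_{f(y)f(y')}$ and $b'_{f(y')f(y)}$, to have opposite signs (or to vanish simultaneously). Combining this with the local formula at $y$ and at $y'$ at the coordinate $v' = f(y')$, respectively $v' = f(y)$, yields $\epsilon_y = \epsilon_{y'}$; here the hypothesis that $f(y),f(y')$ are neighbours in $\Sigma'$ ensures $b_{yy'}\neq 0$, so signs may legitimately be cancelled. The ``in particular'' statement is then the special case $v' = f(y)$ with $f(y)\in \ex'$, where the collapsing argument above reduces the sum to a single term $b_{xy}$.

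The main obstacle I anticipate is the bookkeeping in the local step, specifically, ruling out that preimages of a single $v' \in \XX'$ straddle both monomials of the exchange relation or cancel one another. This is precisely the content of the second part of Proposition \ref{P:almost injective}, so invoking it cleanly is essential. The global propagation step is then, by comparison, a relatively formal induction once one is careful about the distinction between being neighbours in $\Sigma'$ versus in the (possibly smaller) subseed $f(\Sigma)$.
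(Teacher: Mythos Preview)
Your proposal is correct and follows essentially the same two-step strategy as the paper: first use axiom CM3 on the biadmissible sequence $(x)$ to equate the two exchange binomials and match monomials by algebraic independence of $\XX'$, then propagate the resulting sign $\epsilon_x$ along paths of exchangeable neighbours in $f(\Sigma)$. The only cosmetic difference is that you invoke the second part of Proposition~\ref{P:almost injective} up front to rule out ``straddling'' of preimages across the two monomials, whereas the paper carries the integer factors $k_1,k_2$ explicitly and argues the monomial matching directly before citing Proposition~\ref{P:almost injective} for the collapse of the sum in the special case $v'=f(y)\in\ex'$.
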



\begin{proof}
Let $\tilde{\XX} = \{x \in \XX \mid f(x) \in \XX'\}$ be the set of cluster variables in $\XX$ that get mapped to cluster variables in $\XX'$ and let $x \in \tilde{\XX} \cap \ex$ with $f(x) = x'$. 
Because $f$ is a ring homomorphism we have
\begin{align*}
f(\mu_x(x)x) =& f \left( \prod\limits _{v \in \XX \colon b_{xv}>0} v^{b_{xv}} + \prod\limits _{v \in \XX \colon b_{xv}<0} v^{-b_{xv}}\right) \\
=& k_1\prod\limits _{v \in \tilde{\XX} \colon b_{xv}>0} f(v)^{b_{xv}} + k_2 \prod\limits _{v \in \tilde{\XX} \colon b_{xv}<0} f(v)^{-b_{xv}}
\end{align*}
for some $k_1, k_2 \in \mathbb{Z}$. By axiom CM3 this has to be equal to
\[
\mu_{f(x)}(f(x))f(x) = \mu_{x'}(x')x' = \prod\limits _{v' \in \XX' \colon b'_{x'v'}>0} (v')^{b'_{x'v'}} + \prod\limits _{v' \in \XX' \colon b'_{x'v'}<0} (v')^{-b'_{x'v'}}.
\]
We set
\begin{align*}
M_1 = \prod\limits _{v \in \tilde{\XX} \colon b_{xv}>0} f(v)^{b_{xv}} & & M_2 = \prod\limits _{v \in \tilde{\XX} \colon b_{xv}<0} f(v)^{-b_{xv}}\\
M'_1 = \prod\limits _{v' \in \XX' \colon b'_{x'v'}>0} (v')^{b'_{x'v'}} & & M'_2 =  \prod\limits _{v' \in \XX' \colon b'_{x'v'}<0} (v')^{-b'_{x'v'}},
\end{align*}
and thus have $k_1 M_1 + k_2M_2 = M'_1 + M'_2$, where $M_1, M_2, M'_1$ and $M'_2$ are non-zero monic monomials in $\XX'$ over $\ZZ$. Assume first that $b_{x'v'} = 0$ for all $v' \in \XX'$. Then we have $M'_1 = M'_2 = 1$, which implies $k_1 M_1 + k_2M_2 = 2$ and by algebraic independence of variables in $\XX'$ we have $M_1 = M_2 = 1$. Therefore we have $b_{xv} = 0$ for all $v \in \tilde{\XX}$ and in particular for all $v' \in \XX'$ we have $0 = b'_{x'v'} = \sum_{f(v) = v'}b_{xv} = 0$. Assume now that there exists a $z' \in \XX'$ with $b'_{x'z'} \neq 0$. Without loss of generality we may assume $b'_{x'z'} > 0$. Then $z'$ divides $M'_1$. Since $z'$ does not divide $M'_2$, and $M'_1,M'_2 \neq 0$,  by algebraic independence of variables in $\XX'$ either $z$ divides $M_1$ or $z$ divides $M_2$. If $z$ divides $M_1$, by comparing coefficients of $z'$ we obtain $k_1M_1 = M'_1$ and $k_2M_2 = M'_2$ and if $z$ divides $M_2$ we obtain $k_1M_1 = M'_2$ and $k_2M_2 = M'_1$. Either way, since $M'_1$ and $M'_2$ are monic, we get 
$
k_1 = k_2 = 1
$
and the first case implies
\[b'_{f(x)v'}=b'_{x'v'} = \sum\limits_{v\in \XX \colon f(v) = v'}b_{xv},\]
for all $v' \in \XX'$ and the second case implies
\[b'_{f(x)v'}=b'_{x'v'} = -\sum\limits_{v\in \XX \colon f(v) = v'}b_{xv}\]
for all $v' \in \XX'$. In particular, if $x,y \in \ex \cap \tilde{\XX}$ then by Proposition \ref{P:almost injective} we have $b'_{f(x)f(y)} = \pm b_{xy}$.


Let now $x, y \in \tilde{\XX} \cap \ex$ be cluster variables such that their images $f(x)$ and $f(y)$ are cluster variables in the same exchangeably connected component of $f(\Sigma)$ and let them be connected by the path $f(x) = f(x_0), f(x_1), \ldots, f(x_l) = f(y)$ with $x_1, \ldots, x_{l-1} \in \ex$. Assume that we have 
\[b'_{f(x)v'} = \sum\limits_{v\in \XX \colon f(v) = v'} b_{xv}\] 
for all $v' \in \XX'$, i.e.\ no signs occur. By the above argument, this is the case if and only if $b'_{f(x)f(x_1)} = b_{xx_1}$. Iteratively applying the same argument to $x_i$ for $i = 0, \ldots, l-1$, yields that this holds  if and only if $b'_{f(x_i)f(x_{i+1})} = b_{x_ix_{i+1}}$ for all $0 \leq i < l$; in particular if and only if $b'_{f(x_{l-1})f(y)} = b_{x_{l-1}y}$, which holds if and only if \[b'_{f(y)v'} = \sum\limits_{v\in \XX \colon f(v) = v'} b_{yv}\] for all $v' \in \XX'$. This proves the claim.

\end{proof}


In the following, we want to characterize rooted cluster morphisms without specializations. Before we do that, we observe the following useful fact.

\begin{remark}\label{R:values on initial cluster}
Let $\Sigma = (\XX, \ex, B)$ and $\Sigma' = (\XX', \ex', B')$ be seeds. Any map $f \colon \XX \to \XX'$ gives rise to a unique ring homomorphism $f \colon \A(\Sigma) \to\mathcal{F}_{\Sigma'}$, because all elements of the ring $\A(\Sigma)$ are Laurent polynomials in $\XX$. Thus in particular every rooted cluster morphism without specializations is uniquely determined by its values on the cluster of the initial seed.
\end{remark}

\begin{theorem}\label{T:rooted cluster morphisms without specializations}
Let \[\Sigma = (\XX, \ex, B=(b_{vw})_{v,w \in \XX}) \quad \text{and} \quad \Sigma'= (\XX', \ex', B'=(b'_{vw})_{v,w \in \XX'})\] be seeds and let $f \colon \XX \to \XX'$ be a map. Then the algebraic extension of $f$ to $\A(\Sigma)$ gives rise to a rooted cluster morphism $f \colon \A(\Sigma) \to \A(\Sigma')$ if and only if the following hold:
\begin{itemize}
\item[$(1)$]{
The map $f$ restricts to an injection $f \mid_{\ex} \colon \ex \to \ex'$.
}
\item[$(2)$]{
If $f(x) = f(y)$ for some $x \neq y \in \XX$ then both $x$ and $y$ are coefficients of $\Sigma$. In that case for any $\Sigma$-admissible sequence $(x_1, \ldots, x_l)$, setting $\mu_{x_l} \circ \ldots \circ \mu_{x_1}(\Sigma) =: (\tilde{\XX}, \tilde{\ex}, \tilde{B} = (\tilde{b}_{vw})_{v,w \in \tilde{\XX}})$, and for any neighbour $z \in \tilde{\ex}$ of both $x$ and $y$ in $\tilde{\Sigma}$ the entries $\tilde{b}_{zx}$ and $\tilde{b}_{zy}$ have the same sign. 
}
\item[$(3)$] \label{BP3:rooted cluster morphisms without specializations}{
Let $x \in \ex$ and consider the exchangeably connected component \[f(\Sigma)^{ex}_{f(x)} = (f(\XX)^{ex}_{f(x)}, f(\ex)^{ex}_{f(x)}, (b'_{vw})_{v,w \in f(\XX)^{ex}_{f(x)}})\] of $f(x)$ in the full subseed $f(\Sigma) \subseteq \Sigma'$. 
Then we have

\begin{align*}
b'_{f(y)v'} &= \sum\limits_{v\in \XX \colon f(v) = v'}b_{yv} \text{ for all $f(y) \in f(\ex)^{ex}_{f(x)}$ and all $v' \in \XX'$ or } \\
b'_{f(y)v'} &= -\sum\limits_{v \in \XX \colon f(v) = v'}b_{yv} \text{ for all $f(y) \in f(\ex)^{ex}_{f(x)}$ and all $v' \in \XX'$},
\end{align*}
where the empty sum is taken to be $0$.
}
\end{itemize}
\end{theorem}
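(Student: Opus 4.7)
The forward direction is almost immediate from results already established in the paper. Condition (1) follows because axiom CM2 combined with the no-specializations hypothesis forces $f(\ex) \subseteq \ex'$, and then the first part of Proposition \ref{P:almost injective} gives injectivity on $\ex$. Condition (2) is obtained by applying the second part of Proposition \ref{P:almost injective} to the induced rooted cluster morphism $f \colon \A(\tilde\Sigma) \to \A(\tilde\Sigma')$ for each mutation $\tilde\Sigma$ of $\Sigma$, which is still a rooted cluster morphism by Proposition \ref{P:rooted cluster morphism biadmissible}. Condition (3) is precisely Lemma \ref{L:rooted cluster morphisms restrictive}.

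For the converse, Remark \ref{R:values on initial cluster} allows one to extend $f$ uniquely to a ring homomorphism $\A(\Sigma) \to \mathcal{F}_{\Sigma'}$, and the task is to verify that its image lies in $\A(\Sigma')$ and that axioms CM1--CM3 hold. Axioms CM1 and CM2 are immediate from the standing hypothesis $f(\XX) \subseteq \XX'$ and condition (1). The heart of the proof is CM3, which I plan to establish by induction on the length $l$ of a $\Sigma$-admissible sequence, noting that biadmissibility itself has to be proved alongside the commutation identity, since Lemma \ref{L:biadmissible} is not yet available.

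For the base case $l = 1$ and $x \in \ex$, when $y \neq x$ the identity $f(\mu_x(y)) = \mu_{f(x)}(f(y))$ reduces to the non-equality $f(y) \neq f(x)$, which follows from (1) if $y$ is exchangeable and from (2) if $y$ is a coefficient, since $x \in \ex$. When $y = x$ one must show $f(\mu_x(x)) = \mu_{f(x)}(f(x))$ by a direct comparison of the two exchange relations. The key simplification is that condition (2) forces, for each $v' \in \XX'$, all non-zero summands $b_{xv}$ with $f(v) = v'$ to share a common sign, so that $\bigl|\sum_{v \colon f(v) = v'} b_{xv}\bigr| = \sum_{v \colon f(v) = v'} |b_{xv}|$. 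Substituting this into condition (3) collapses the exchange relation on the $\Sigma'$-side into the $f$-image of the exchange relation on the $\Sigma$-side; an overall sign $\varepsilon = -1$ merely interchanges the two monomial terms in the numerator.

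The inductive step reduces to showing that conditions (1)--(3) are preserved when $\Sigma$ is replaced by $\mu_x(\Sigma)$ and $\Sigma'$ by $\mu_{f(x)}(\Sigma')$; after this the base case applied to the next mutation finishes the argument. Preservation of (1) follows because $f(\mu_x(x)) = \mu_{f(x)}(f(x))$ was just verified, and preservation of (2) is automatic because its statement already quantifies over all mutations of $\Sigma$. The main obstacle I anticipate is verifying that (3) survives mutation: this is a direct but somewhat intricate calculation with the matrix mutation formula applied to both sides of $b'_{f(y)v'} = \varepsilon \sum_{v \colon f(v) = v'} b_{yv}$, and once again the cancellation-free identity $\bigl|\sum b_{xv}\bigr| = \sum |b_{xv}|$ provided by (2) is what makes the algebra align. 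A minor subtlety requiring care is that the exchangeably connected components of $f(\mu_x(\Sigma))$ inside $\mu_{f(x)}(\Sigma')$ may differ from those of $f(\Sigma)$ inside $\Sigma'$, so the sign $\varepsilon$ has to be tracked component by component in the mutated seed.
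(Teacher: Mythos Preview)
Your proposal is correct and follows essentially the same strategy as the paper's proof: the forward direction invokes the same three results (Proposition~\ref{P:almost injective}, Proposition~\ref{P:rooted cluster morphism biadmissible}, and Lemma~\ref{L:rooted cluster morphisms restrictive}), and the converse is handled by the same induction on the length of an admissible sequence, the only difference being organizational---the paper packages the induction as four simultaneous conditions (a)--(d), whereas you frame it as a base case plus preservation of (1)--(3) under a single mutation, which is equivalent. One small omission: in the forward direction you should invoke Lemma~\ref{L:biadmissible} before applying Proposition~\ref{P:rooted cluster morphism biadmissible}, since the latter only applies to biadmissible sequences.
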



\begin{remark}
Condition (2) of Theorem \ref{T:rooted cluster morphisms without specializations} is not always easy to check for two given seeds $\Sigma = (\XX, \ex, B)$ and $\Sigma' = (\XX', \ex', B')$ and a map $f \colon \XX \to \XX'$. However, it is useful for checking when such a map does not induce a rooted cluster morphism. On the other hand, if for all $x, y \in \XX \setminus \ex$ with $f(x) = f(y)$ we have $b_{xv} = b_{yv}$ for all $v \in \ex$ then it is straightforward to check using the definition of matrix mutation in Definition \ref{D:mutation of a seed} that condition (2) is satisfied.
\end{remark}

\begin{proof}
Assume first that the map $f$ extends to a rooted cluster morphism. By axiom CM2 and Proposition \ref{P:almost injective} point (1) holds. By Lemma \ref{L:biadmissible} every $\Sigma$-admissible sequence is $(f, \Sigma, \Sigma')$-biadmissible. Point (2) follows from Proposition \ref{P:almost injective} by using  Proposition \ref{P:rooted cluster morphism biadmissible} to view $f$ as a rooted cluster morphism with source $\A(\tilde{\Sigma})$. By Lemma \ref{L:rooted cluster morphisms restrictive} point (3) is satisfied.

Assume, on the other hand, that $f \colon \XX \to \XX'$ is a map satisfying conditions (1) to (3). By Remark \ref{R:values on initial cluster}, it gives rise to a unique ring homomorphism $f \colon \A(\Sigma) \to \mathcal{F}_{\Sigma'}$. This ring homomorphism satisfies axioms CM1 and CM2 by definition and condition (1). It remains to check axiom CM3 and that the image $f(\A(\Sigma))$ is contained in $\A(\Sigma')$.

We show the following points for every $\Sigma$-admissible sequence $(x_1, \ldots, x_l)$ by induction on the length $l$.
\begin{itemize}
\item[(a)]{The sequence $(x_1, \ldots, x_l)$ is $(f, \Sigma, \Sigma')$-biadmissible.}
\item[(b)]{For any $y \in \XX$ we have \[f(\mu_{x_l} \circ \ldots \circ \mu_{x_1}(y)) = \mu_{f(x_l)} \circ \ldots \circ \mu_{f(x_1)}(f(y)).\]}
\item[(c)]{Set \[\mu_{x_l} \circ \ldots \mu_{x_1}(\Sigma) =: \tilde{\Sigma}  = (\tilde{\XX}, \tilde{\ex}, \tilde{B} = (\tilde{b}_{vw})_{v,w \in \tilde{\XX}})\] to be the mutation of the seed $\Sigma$ along $(x_1, \ldots, x_l)$ and \[\mu_{f(x_l)} \circ \ldots \mu_{f(x_1)}(\Sigma') =: \tilde{\Sigma}' = (\tilde{\XX}', \tilde{\ex}', \tilde{B}' = (\tilde{b}'_{vw})_{v,w \in \tilde{\XX}'})\] to be the mutation of $\Sigma'$ along $(f(x_1), \ldots, f(x_l))$. If $f(x) = f(y)$ for some $x \neq y \in \tilde{X}$, then both $x$ and $y$ are coefficients of $\tilde{\Sigma}$. (This is equivalent to saying that for any $x \in \tilde{\ex}$ and any $y \in \tilde{\XX}$ with $x \neq y$ we have $f(x) \neq f(y)$.)
}
\item[(d)]{For every $v \in \tilde{\ex}$ we have
\begin{align*}\tilde{b}'_{f(y)v'} =& \sum\limits_{v \in \tilde{\XX} \colon f(v) = v'}\tilde{b}_{yv}  &\text{for all $v' \in \tilde{\XX}'$ or } \\ \tilde{b}'_{f(y)v'} =& -\sum\limits_{v\in \tilde{\XX} \colon f(v) = v'}\tilde{b}_{yv} &\text{for all $v' \in \tilde{\XX}'$,}\end{align*}
and for all $y \in \tilde{\ex}$ such that $f(x)$ and $f(y)$ lie in the same exchangeably connected component of $f(\tilde{\Sigma})$ we have \[\tilde{b}'_{f(x)v'} = \sum\limits_{v \in \tilde{\XX} \colon f(v) = v'}\tilde{b}_{xv}\] for all $v' \in \tilde{\XX}'$ if and only if \[\tilde{b}'_{f(y)v'} = \sum\limits_{v \in \tilde{\XX} \colon f(v) = v'}\tilde{b}_{yv}\] for all $v' \in \tilde{\XX}'$.
}

\end{itemize}
If these conditions are satisfied for every $\Sigma$-admissible sequence $(x_1, \ldots, x_l)$, then by condition (b) axiom CM3 is satisfied and by conditions (a) and (b) the image of $\A(\Sigma)$ under the algebraic extension of $f$ lies in $\A(\Sigma')$. Conditions (c) and (d) are used to help prove conditions (a) and (b).

We check conditions (a) to (d) for arbitrary $\Sigma$-admissible sequences by induction on their length $l$. For a $\Sigma$-admissible sequence of length $l = 0$ conditions (a) and (b) are satisfied trivially, condition (c) is satisfied by condition (2) and condition (d) is satisfied by condition (3). Assume now that they are satisfied for all $\Sigma$-admissible sequences of length $ \leq l$ and let $(x_1, \ldots, x_{l+1})$ be a $\Sigma$-admissible sequence of length $l+1$. We set
 \[\mu_{x_l} \circ \ldots \mu_{x_1}(\Sigma) =: \tilde{\Sigma}  = (\tilde{\XX}, \tilde{\ex}, \tilde{B} = (\tilde{b}_{vw})_{v,w \in \tilde{\XX}})\]
and
 \[\mu_{f(x_l)} \circ \ldots \mu_{f(x_1)}(\Sigma') =: \tilde{\Sigma}' = (\tilde{\XX}', \tilde{\ex}', \tilde{B}' = (\tilde{b}'_{vw})_{v,w \in \tilde{\XX}'})\]
as above.

We start by proving condition (a) for $(x_1, \ldots, x_{l+1})$.  We have $x_{l+1} = \mu_{x_l} \circ \ldots \circ \mu_{x_1}(y)$ for some $y \in \ex$ and thus $f(x_{l+1}) = f(\mu_{x_{l}} \circ \ldots \circ \mu_{x_1}(y)) = \mu_{f(x_{l})} \circ \ldots \circ \mu_{f(x_1)}(f(y))$ by induction assumption (b) on the sequence $(x_1, \ldots, x_l)$. By condition (1) we have $f(y) \in \ex'$ and thus $f(x_{l+1}) \in \mu_{f(x_{l})} \circ \ldots \circ \mu_{f(x_1)}(\ex')$ and $(x_1, \ldots, x_{l+1})$ is $(f, \Sigma, \Sigma')$-biadmissible.
 
We now prove condition (b) for $(x_1, \ldots, x_{l+1})$. Let $y \in \XX$. We have $\mu_{x_l} \circ \ldots \circ \mu_{x_1}(y) \in \tilde{X}$ and $x_{l+1} \in \tilde{\ex}$. If we have $x_{l+1} \neq \mu_{x_l} \circ \ldots \circ \mu_{x_1}(y)$ this implies \[ f(\mu_{x_l} \circ \ldots \circ \mu_{x_1}(y)) \neq f(x_{l+1})\] by induction assumption (c). In this case mutation at $x_{l+1}$, respectively at $f(x_{l+1})$ acts trivially on $\mu_{x_{l}} \circ \ldots \circ \mu_{x_1}(y)$, respectively on $\mu_{f(x_{l})} \circ \ldots \circ \mu_{f(x_1)}(f(y))$ and we obtain
\begin{align*}
f(\mu_{x_{l+1}} \circ \ldots \circ \mu_{x_1}(y)) &= f(\mu_{x_{l}} \circ \ldots \circ \mu_{x_1}(y)) \\
&= \mu_{f(x_{l})} \circ \ldots \circ \mu_{f(x_1)}(f(y)) \\ &= \mu_{f(x_{l+1})} \circ \ldots \circ \mu_{f(x_1)}(f(y)),
\end{align*}
where the second equality follows from induction assumption (b) on the sequence $(x_1, \ldots, x_l)$.
If, on the other hand, $x_{l+1} = \mu_{x_l} \circ \ldots \circ \mu_{x_1}(y)$ then we have 
\begin{align}\label{E:substitute}
f(\mu_{x_{l+1}} \circ \ldots \circ \mu_{x_1}(y))
&= \frac{\prod\limits_{v \in \tilde{\XX}:\tilde{b}_{x_{l+1}v}>0}f(v)^{\tilde{b}_{x_{l+1}v}} + \prod\limits_{v\in \tilde{\XX}:\tilde{b}_{x_{l+1}v}<0}f(v)^{-\tilde{b}_{x_{l+1}v}}}{f(x_{l+1})}.
\end{align}
By induction assumption (d) we have
\begin{align*}\tilde{b}'_{f(x_{l+1})v'} =& \sum\limits_{v\in \tilde{\XX} \colon f(v) = v'}\tilde{b}_{x_{l+1}v}   \text{ for all $v' \in \tilde{\XX}'$ or } \\ \tilde{b}'_{f(x_{l+1})v'} =& -\sum\limits_{v\in \tilde{\XX} \colon f(v) = v'}\tilde{b}_{x_{l+1}v}  \text{ for all $v' \in \tilde{\XX}'$.}\end{align*}
Without loss of generality, assume that the first equation holds (otherwise we can simply change the signs below accordingly). By condition (2), for any $v' \in \tilde{\XX'}$ all non-trivial summands in \[\sum\limits_{v \in \tilde{\XX} \colon f(v) = v'}\tilde{b}_{x_{l+1}v}\] have the same sign. Therefore for any $v' \in \tilde{\XX}'$, we have $\tilde{b}_{f(x_{l+1})v'} \geq 0$ if and only if $\tilde{b}_{x_{l+1}v} \geq 0$ for all $v \in \XX$ with $f(v) = v'$, and $\tilde{b}_{f(x_{l+1})v'} = 0$ if and only if $\tilde{b}_{x_{l+1}v} = 0$ for all $v \in \tilde{\XX}$ with $f(v) = v'$. We get
\begin{align*}
\prod\limits_{v' \in \tilde{\XX}':\tilde{b}'_{f(x_{l+1})v'}>0}(v')^{\tilde{b}'_{f(x_{l+1})v'}} &= \prod\limits_{v' \in \tilde{\XX}':\tilde{b}'_{f(x_{l+1})v'}>0}(v')^{(\sum\limits_{v \in \tilde{\XX} \colon f(v) = v'}\tilde{b}_{x_{l+1}v})} \\ &= \prod\limits_{v \in \tilde{\XX}:\tilde{b}_{x_{l+1}v} >0}f(v)^{\tilde{b}_{x_{l+1}v}}
\end{align*}
and the analogous statement for the product over $v' \in \tilde{\XX}'$ with $\tilde{b}'_{f(x_{l+1})v'}<0$. Substituting into Equation (\ref{E:substitute}) we obtain that $f(\mu_{x_{l+1}} \circ \ldots \circ \mu_{x_1}(y))$ is equal to
\begin{align*}
\frac{\prod\limits_{v' \in \tilde{\XX}':\tilde{b}'_{f(x_{l+1})v'}>0}(v')^{\tilde{b}'_{f(x_{l+1})v'}} + \prod\limits_{v' \in \tilde{\XX}':\tilde{b}'_{f(x_{l+1})v'}<0}(v')^{-\tilde{b}'_{f(x_{l+1})v'}}}{f(x_{l+1})},
\end{align*}
which by definition of mutation is equal to \[\mu_{f(x_{l+1})}(f(x_{l+1})) = \mu_{f(x_{l+1})}(f(\mu_{x_l} \circ \ldots \circ \mu_{x_1}(y))).\] By induction assumption (b) we obtain \[f(\mu_{x_{l+1}} \circ \ldots \circ \mu_{x_1}(y)) = \mu_{f(x_{l+1})} \circ \ldots \circ \mu_{f(x_1)}(f(y)).\] 

We can now prove condition (c) for $(x_1, \ldots, x_{l+1})$. Let now $x \in \mu_{x_{l+1}}(\tilde{\ex})$ and $y \in \mu_{x_{l+1}}(\tilde{\XX})$ with $x \neq y$. We want to show that $f(x) \neq f(y)$. We have $x = \mu_{x_{l+1}}(\tilde{x})$ and  $y = \mu_{x_{l+1}}(\tilde{y})$ for some $\tilde{x} \in \tilde{\ex}$ and $\tilde{y} \in \tilde{\XX}$ with $\tilde{x} \neq \tilde{y}$. If both $\tilde{x} \neq x_{l+1}$ and $\tilde{y} \neq x_{l+1}$, then $x = \tilde{x} \in \tilde{\ex}$ and $y = \tilde{y} \in \tilde{\XX}$ and by induction assumption (c) we have $f(x) \neq f(y)$.
Thus assume without loss of generality that $\tilde{x}= x_{l+1}$ and $\tilde{y} \neq x_{l+1}$. Then we have 
\begin{align*}
f(x)f(x_{l+1}) &= f(\mu_{x_{l+1}}(x_{l+1}))f(x_{l+1}) \\ &= \prod\limits_{v \in \tilde{\XX}: \tilde{b}_{x_{l+1}v}>0}f(v)^{\tilde{b}_{x_{l+1}v}} + \prod\limits_{v \in \tilde{\XX}:\tilde{b}_{x_{l+1}v}<0}f(v)^{-\tilde{b}_{x_{l+1}v}}
\end{align*} 
and thus $f(x)$ divides the right hand side of the equation. On the other hand, we have $f(y) = f(\mu_{x_{l+1}}(\tilde{y})) = f(\tilde{y}) \in \tilde{\XX}'$. Assume for a contradiction that $f(x) = f(y)$. In particular, this implies $f(x) \in \tilde{\XX}'$. By algebraic independence of the elements of $\tilde{\XX}'$, $f(x)$ must divide both \[\prod\limits_{v \in \tilde{\XX}: \tilde{b}_{x_{l+1}v}>0}f(v)^{\tilde{b}_{x_{l+1}v}}\] and \[\prod\limits_{v \in \tilde{\XX}:\tilde{b}_{x_{l+1}v}<0}f(v)^{-\tilde{b}_{x_{l+1}v}}.\] This would mean that there exist $v \neq w \in \tilde{X}$ with $f(v) = f(w)$ and such that $\tilde{b}_{x_{l+1}v}>0$ and $\tilde{b}_{x_{l+1}w} < 0$, which contradicts condition (2). Thus we have $f(x) \neq f(y)$.

Finally we prove condition (d) for $(x_1, \ldots, x_{l+1})$.
Set now \[\mu_{x_{l+1}}(\tilde{B}) =: \mathcal{B} = (\beta_{vw})_{v,w \in \mu_{x_{l+1}}(\tilde{\XX})}\] and \[\mu_{f(x_{l+1})}(\tilde{B}') =: \mathcal{B}' = (\beta'_{vw})_{v,w \in \mu_{f(x_{l+1})}(\tilde{\XX}')}.\] Fix $v = \mu_{x_{l+1}}(\tilde{v}) \in \mu_{x_{l+1}}(\tilde{\ex})$. By definition of matrix mutation (Definition \ref{D:mutation of a seed}), for all $w = \mu_{x_{l+1}}(\tilde{w}) \in \mu_{x_{l+1}}(\tilde{\XX})$ we have
\begin{align*}
\beta_{vw}= \begin{cases}
-\tilde{b}_{\tilde{v}\tilde{w}}, \text{ if } \tilde{v} = x_{l+1} \text{ or } \tilde{w}= x_{l+1}\\
\tilde{b}_{\tilde{v}\tilde{w}} + \frac{1}{2}(|\tilde{b}_{\tilde{v}x_{l+1}}| \tilde{b}_{x_{l+1}\tilde{w}} + \tilde{b}_{\tilde{v}x_{l+1}} |\tilde{b}_{x_{l+1}\tilde{w}}|), \text{ else.}
\end{cases}
\end{align*}
We have shown that condition (b) holds for the sequence $(x_1, \ldots, x_{l+1})$ and thus we have $f(v) = \mu_{f(x_{l+1})}(f(\tilde{v}))$.
Thus for every $w' = \mu_{f(x_{l+1})}(\tilde{w}') \in \mu_{f(x_{l+1})}(\tilde{\XX}')$ we have
\begin{align*}
\beta'_{f(v)w'} &= \begin{cases}
-\tilde{b}'_{f(\tilde{v})\tilde{w}'}, \text{ if } f(\tilde{v}) = f(x_{l+1}) \text{ or } \tilde{w}' = f(x_{l+1})\\
\tilde{b}'_{f(\tilde{v})\tilde{w}'} + \frac{1}{2}(|\tilde{b}'_{f(\tilde{v})f(x_{l+1})}| \tilde{b}'_{f(x_{l+1})\tilde{w}'} + \tilde{b}'_{f(\tilde{v})f(x_{l+1})} |\tilde{b}'_{f(x_{l+1})\tilde{w}'}|), \\ \text{ else.}
\end{cases}
\end{align*}

By induction assumption (d) we have
\begin{eqnarray*} \label{assumption}
\tilde{b}'_{f(\tilde{v})\tilde{u}'} = \pm \sum\limits_{\tilde{u} \in \tilde{\XX} \colon f(\tilde{u})=\tilde{u}'} \tilde{b}_{\tilde{v}\tilde{u}} \text{ and } \tilde{b}'_{f(x_{l+1})\tilde{u}'} = \pm \sum\limits_{\tilde{u} \in \tilde{\XX} \colon f(\tilde{u})=\tilde{u}'}\tilde{b}_{x_{l+1}\tilde{u}}
\end{eqnarray*}
for all $\tilde{u}'\in \tilde{\XX}'$ and the signs of the two sums are the same if $f(\tilde{v})$ and $f({x_{l+1}})$ are connected by a path of variables in $f(\tilde{\ex})$, hence in particular if $\tilde{b}'_{f(v)f(x_{l+1})} \neq 0$. Note further that since $\tilde{v} \in \tilde{\ex}$ we have $\tilde{b}'_{f(\tilde{v})f(x_{l+1})} = \pm \tilde{b}_{\tilde{v}x_{l+1}}$ by induction assumption (d) and $f(\tilde{v}) = f(x_{l+1})$ if and only if $\tilde{v} = x_{l+1}$ by assumption (c). Setting \[S:=  \sum\limits_{\tilde{w} \in \tilde{\XX} \colon f(\tilde{w})=\tilde{w}'}\tilde{b}_{x_{l+1}\tilde{w}}\] we obtain 
\begin{align*}
\beta'_{f(v)w'} =& \begin{cases}
-(\pm \sum\limits_{ \tilde{w} \in \tilde{\XX} \colon f(\tilde{w})=\tilde{w}'} \tilde{b}_{\tilde{v}\tilde{w}}), \text{ if }\tilde{v} = x_{l+1} \text{ or } \tilde{w}' = f(x_{l+1}) \\
\pm \sum\limits_{\tilde{w} \in \tilde{\XX} \colon f(\tilde{w})=\tilde{w}'} \tilde{b}_{\tilde{v}\tilde{w}} + \frac{1}{2}\big(|\tilde{b}_{\tilde{v}x_{l+1}}| (\pm S) \pm \tilde{b}_{\tilde{v}x_{l+1}} | S|\big), \text{ else.}
\end{cases}
\end{align*}
Pulling out the sum yields
\begin{align*}
\beta'_{f(v)w'}=& \begin{cases}
\pm \sum\limits_{\tilde{w} \in \tilde{\XX} \colon f(\tilde{w})=\tilde{w}'} (-\tilde{b}_{\tilde{v}\tilde{w}}), \text{ if } \tilde{v} = x_{l+1}\\
\pm (-\tilde{b}_{\tilde{v}x_{l+1}}), \text{ if } \tilde{w}' = f(x_{l+1}) \\
\pm \sum\limits_{\tilde{w} \in \tilde{\XX} \colon f(\tilde{w})=\tilde{w}'} \Big( \tilde{b}_{\tilde{v}\tilde{w}} + \frac{1}{2}\big(|\tilde{b}_{\tilde{v}x_{l+1}}| \tilde{b}_{x_{l+1}\tilde{w}} + \tilde{b}_{\tilde{v}x_{l+1}} |\tilde{b}_{x_{l+1}\tilde{w}}|\big)\Big), \text{ else.}
\end{cases}\\
=&\pm \sum\limits_{\tilde{w} \in \tilde{\XX} \colon f(\tilde{w})=\tilde{w}'} \mu_{x_{l+1}}(\tilde{b}_{\tilde{v}\tilde{w}}) \\
=& \pm \sum\limits_{w \in \mu_{x_{l+1}}(\tilde{\XX}) \colon f(w) = w'} \beta_{vw},
\end{align*}
where the last equality holds because for $w = \mu_{x_{l+1}}(\tilde{w})$ by condition (b) we have \[f(w) = f(\mu_{x_{l+1}}(\tilde{w})) = \mu_{f(x_{l+1})}(f(\tilde{w})).\] 
Thus for every $w' = \mu_{f(x_{l+1})}(\tilde{w}')$ we have $f(w) = w' = \mu_{f(x_{l+1})}(\tilde{w}')$ if and only if $f(\tilde{w}) = \tilde{w}'$.

Observe that by definition of matrix mutation, if for $x,y \in \mu_{x_{l+1}}(\XX)$ with $x = \mu_{x_{l+1}}(\tilde{x})$ and $y = \mu_{x_{l+1}}(\tilde{y})$ we have $\beta'_{f(x)f(y)} \neq 0$, then we have $\tilde{b}'_{f(\tilde{x})f(\tilde{y})} \neq 0$ or both $\tilde{b}'_{f(\tilde{x})f(x_{l+1})} \neq 0$ and $\tilde{b}'_{f(\tilde{y})f(x_{l+1})} \neq 0$. Therefore, if two variables $f(x) = \mu_{f(x_{l+1})}(f(\tilde{x})) \in f(\mu_{x_{l+1}}(\XX))$ and $f(y) = \mu_{f(x_{l+1})}(f(\tilde{y})) \in f(\mu_{x_{l+1}}(\XX))$ are exchangeably connected in $f(\mu_{x_{l+1}}(\tilde{\Sigma}))$, then $f(\tilde{x})$ and $f(\tilde{y})$ are exchangeably connected in $f(\tilde{\Sigma})$. 
Thus the signs of the sums in a given exchangeably connected component of $f(\Sigma)$ carry over from $\tilde{B}'$ to $\mathcal{B}'$ and we obtain by induction assumption (d) that \begin{align*} \beta'_{f(x)u'} &= \sum\limits_{u \in \mu_{x_{l+1}}(\tilde{\XX}) \colon f(u)=u'} \beta_{xw} \end{align*} for all $u' \in \mu_{f(x_{l+1})}(\tilde{\XX}')$ if and only if \begin{align*}\beta'_{f(y)u'} &= \sum\limits_{u \in \mu_{x_{l+1} \colon f(u)=u'}(\tilde{\XX})} \beta_{yw}\end{align*}  for all $u' \in \mu_{f(x_{l+1})}(\tilde{\XX}')$.
\end{proof}

\section[Rooted cluster algebras of infinite rank as colimits of rooted cluster algebras of finite rank]{Rooted cluster algebras of infinite rank as colimits of rooted cluster algebras of finite rank
\sectionmark{Rooted cluster algebras of infinite rank}
}\label{S:colimits}
\sectionmark{Rooted cluster algebras of infinite rank}

In this section, we show that every rooted cluster algebra of infinite rank can be written as a linear
colimit of rooted cluster algebras of finite rank. This yields a formal way to manipulate cluster algebras of infinite rank by viewing them locally as cluster algebras of finite rank. 

\subsection{Colimits and limits in $Clus$}

We start by recalling the definition of colimits, and limits (their dual notion). Let $\CC$ and $\J$ be categories and let $F \colon \J \to \CC$ be a diagram of type $\J$ in the category $\CC$, i.e.\ a functor from $\J$ to $\CC$. 

%

The {\em colimit $\mathrm{colim} (F)$ of $F$} (if it exists) is an object $\mathrm{colim} (F) \in \CC$ together with a family of morphisms $f_i\colon F(i) \to \mathrm{colim} (F)$ in $\CC$ indexed by the objects $i \in \J$ such that for any morphism $f_{ij}\colon i \to j$ in $\J$ we have $f_j \circ F(f_{ij}) = f_i$ and for any object $C \in \CC$ with a family of morphisms $g_i\colon F(i) \to C$ in $\CC$ for objects $i \in \J$  such that $g_j \circ F(f_{ij}) = g_i$ for all morphisms $f_{ij}\colon i \to j$ in $\J$ there exists a unique morphism $h \colon \mathrm{colim} (F) \to C$ such that the following diagram commutes.
\[
\xymatrix{& C  & \\ & \mathrm{colim}( F) \ar[u]^h & \\ F(i) \ar[ruu]^{g_i} \ar[ru]_{f_i} \ar[rr]^{F(f_{ij})} && F(j) \ar[luu]_{g_j} \ar[lu]^{f_j}}
\]
The {\em limit $\mathrm{lim} (F)$ of $F$} (if it exists) is defined dually.

A limit $\mathrm{lim}(F)$ or colimit $\mathrm{colim}(F)$ is called {\em finite} if the index category $\J$ in the diagram $F \colon \J \to \CC$ is finite. It is called {\em small} if the index category $\J$ in the diagram $F \colon \J \to \CC$ is small.
A category is called {\em complete}, respectively {\em cocomplete}, if it has all small limits, respectively colimits.

\begin{remark}\label{R:finite limits}
Products are examples of limits. They are limits of diagrams whose index category is a discrete category, i.e.\ a category with no morphisms except the identity morphisms. Dually, coproducts are examples of colimits.

Coequalizers are examples for finite colimits. They are colimits of diagrams $G: \mathcal{J} \to \CC$, where $\mathcal{J}$ is the category with two objects $i_1$ and $i_2$ and two parallel morphisms $i_1 \rightrightarrows i_2$ in addition to the identity morphisms. Dually, equalizers are examples of finite limits.

In fact, these are rather important examples as having equalizers and small products is necessary and sufficient for a category to be complete, and dually a category is cocomplete if and only if it has coequalizers and small coproducts, see for example Mac Lane's book \cite[Chapter~V]{MacLane}.
\end{remark}

\begin{theorem}
The category $Clus$ is neither complete nor cocomplete.
\end{theorem}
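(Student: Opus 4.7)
The plan is to establish non-cocompleteness and non-completeness separately. By Remark \ref{R:finite limits}, for non-cocompleteness it is enough to exhibit one coequalizer that fails to exist (countable coproducts exist by \cite[Lemma~5.1]{ADS} and uncountable ones by Remark \ref{R:connected implies countable}); for non-completeness it is enough to exhibit one small product that fails.

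For the coequalizer, I would take $\A(\Sigma) = \ZZ[a]$ with $\Sigma = (\{a\}, \emptyset, 0)$ and $\A(\Sigma')$ the $A_2$-type cluster algebra with initial seed $\Sigma' = (\{b_1, b_2\}, \{b_1, b_2\}, Q)$, where $Q$ is the single arrow $b_1 \to b_2$. The maps $f, g \colon \A(\Sigma) \to \A(\Sigma')$ defined by $f(a) = b_1$ and $g(a) = b_2$ are evidently rooted cluster morphisms. Supposing a coequalizer $q \colon \A(\Sigma') \to \A(\Sigma_C)$ existed, the common value $w := q(b_1) = q(b_2)$ would lie in $\ex_{\Sigma_C} \cup \ZZ$ by axioms CM1 and CM2. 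If $w \in \ex_{\Sigma_C}$, axiom CM3 applied to the biadmissible sequence $(b_1)$ would force $w\,\mu_w(w) = w + 1$ in $\A(\Sigma_C)$; but by Definition \ref{D:mutation of a seed} the exchange polynomial at $w$ is a polynomial in the cluster variables other than $w$, so this identity is incompatible with the algebraic independence of the initial cluster. If instead $w \in \ZZ$, then $q((b_2 + 1)/b_1) = (w+1)/w$ must lie in $\A(\Sigma_C) \cap \mathbb{Q} = \ZZ$ (using the Laurent phenomenon for $\A(\Sigma_C)$), forcing $w \in \{\pm 1\}$; but then the two rooted cluster morphisms $q_{\pm 1} \colon \A(\Sigma') \to \ZZ$ sending $b_i \mapsto \pm 1$ both coequalize $f$ and $g$ and must factor uniquely through $q$, yielding ring homomorphisms $h_{\pm 1} \colon \A(\Sigma_C) \to \ZZ$ with $h_{\pm 1}(w) = \pm 1$, which is impossible since both fix $\ZZ$.

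For the product, I would show that the product of $\ZZ = \A(\Sigma_\emptyset)$ with itself does not exist. Assuming such a product $P$ with projections $\pi_1, \pi_2$ existed, the universal property applied to the test object $A = \ZZ[a]$ yields a bijection between $\ZZ^2$ (pairs of images of $a$ in $\ZZ$) and the elements $e_{m,n} \in \XX_P \cup \ZZ$ prescribed by their projection pair $(m, n)$; a short uniqueness argument then gives $e_{m,m} = m$ for diagonal pairs and $e_{m,n} \in \XX_P$ whenever $m \neq n$. Reusing the $A_2$-seed $\Sigma'$ from above, the pair of morphisms $\phi_k \colon \A(\Sigma') \to \ZZ$ defined by $\phi_k(b_i) = (-1)^{k+1}$ for $i, k \in \{1,2\}$ are both rooted cluster morphisms (one verifies directly that every cluster variable of $\A(\Sigma')$ maps to an integer). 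The universal property produces a unique $h \colon \A(\Sigma') \to P$ with $\pi_k \circ h = \phi_k$; composing $h$ with the obvious rooted cluster morphisms $\iota_j \colon A \to \A(\Sigma')$, $a \mapsto b_j$, and invoking uniqueness of $h_{1,-1}$ forces $h(b_1) = h(b_2) = e_{1,-1} \in \XX_P$. Axiom CM3 applied to the biadmissible sequence $(b_1)$ then gives $\mu_{e_{1,-1}}(e_{1,-1}) = (e_{1,-1} + 1)/e_{1,-1}$, so the exchange polynomial at $e_{1,-1}$ equals $e_{1,-1} + 1$, impossible by algebraic independence of $\XX_P$ for the same reason as in the coequalizer case.

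The main obstacle in both arguments is to combine the rigidity of the universal property with axiom CM3 so as to extract the forbidden exchange identity $v\,\mu_v(v) = v + 1$ at some cluster variable $v$; once obtained, this identity is incompatible with Definition \ref{D:mutation of a seed} combined with algebraic independence of the initial cluster, since the exchange polynomial at $v$ lies in the cluster variables other than $v$, while $v + 1$ involves $v$. The product case carries the additional technical step of verifying, via composition with the $\iota_j$, that both $h(b_1)$ and $h(b_2)$ coincide with the canonically prescribed $e_{1,-1}$.
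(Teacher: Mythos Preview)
Your argument is correct, and both halves differ from the paper's in interesting ways.

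For non-cocompleteness, the paper also exhibits a missing coequalizer, but with a different parallel pair: it takes two copies of the $A_2$ rooted cluster algebra and the two isomorphisms given by $x_i \mapsto y_i$ and $x_i \mapsto y_{1-i}$. The obstruction is then deduced from Proposition~\ref{P:almost injective} directly (two exchangeable variables cannot have the same image in a cluster), and the integer case is ruled out by testing against two specialization morphisms to $\A(\Sigma_\emptyset) = \ZZ$. Your setup with $\ZZ[a] \rightrightarrows \A(\Sigma')$ is lighter in that the source carries no exchangeable variables, so axiom CM3 for $f,g$ is vacuous; you then recover the same obstruction by hand via the forbidden identity $w\,\mu_w(w) = w+1$. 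Your choice of the test morphisms $q_{\pm 1}$ (specializing at $\pm 1$) is also a clean way to kill the integer case; note that the intermediate Laurent-phenomenon step restricting $w$ to $\{\pm 1\}$ is not actually needed, since $h_{\pm 1}$ already fix $\ZZ$ pointwise and hence force $w = 1$ and $w = -1$ simultaneously.

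For non-completeness, the paper simply invokes \cite[Proposition~5.4]{ADS}, which shows that finite products do not exist in $Clus$. Your argument is a self-contained proof of (a special case of) that proposition: the product $\ZZ \times \ZZ$ fails. The mechanism you use---probe with $A = \ZZ[a]$ to manufacture canonical elements $e_{m,n}$, then force $h(b_1) = h(b_2) = e_{1,-1} \in \ex_P$ via uniqueness, and finally extract the impossible exchange relation---is essentially the same CM3-versus-algebraic-independence trick as in the coequalizer half, recycled in the product setting. This buys you independence from the cited result, at the cost of a longer argument.
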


\begin{proof}
If the category $Clus$ were complete, then finite products would exist, cf.\ Remark \ref{R:finite limits}. However, by  \cite[Proposition~5.4]{ADS}, the category $Clus$ does not admit finite products, hence it cannot be complete. 

Furthermore, if $Clus$ were cocomplete then coequalizers would exist. However, consider the seeds \[\Sigma_0 = (\{x_0,x_1\}, \{x_0,x_1\}, x_0 \to x_1) \text{ and } \Sigma_1 = (\{y_0,y_1\}, \{y_0,y_1\}, y_0 \to y_1)\] and the parallel rooted cluster isomorphisms defined by the algebraic extensions of
\begin{align*}
f \colon\begin{cases} \A(\Sigma_0) \to \A(\Sigma_1) \\ 
x_i \mapsto y_i \text{ for } i = 0,1 \end{cases} \text{ and }
g \colon \begin{cases} \A(\Sigma_0) \to \A(\Sigma_1) \\ 
x_i \mapsto y_{1-i}  \text{ for } i = 0,1. \end{cases}
\end{align*} 
Assume for a contradiction that there exists a coequalizer for $f$ and $g$, i.e.\;a  rooted cluster algebra $\A(\Sigma)$ with initial seed $\Sigma = (\XX, \ex, B)$ with a rooted cluster morphism $\varphi \colon \A(\Sigma_1) \to \A(\Sigma)$ such that $\varphi \circ f = \varphi \circ g$ and it is universal with this property. Because $\varphi$ is a rooted cluster morphism and $\varphi \circ f = \varphi \circ g$ we have $\varphi(y_0) = \varphi(y_1) \in \ex \cup \mathbb{Z}$. By Proposition \ref{P:almost injective} two distinct exchangeable variables of $\Sigma_1$ cannot be sent to the same exchangeable variable via a rooted cluster morphism. 
Thus we must have $\varphi(y_0) = \varphi(y_1) \in \mathbb{Z}$. Consider the empty seed $\Sigma_{\emptyset} = (\emptyset, \emptyset, \emptyset)$. As a ring, we have $\A(\Sigma_{\emptyset}) \cong \mathbb{Z}$. Consider the rooted cluster morphisms $\psi_1 \colon \A(\Sigma_1) \to \A(\Sigma_{\emptyset})$, defined by sending all cluster variables in $\A(\Sigma_1)$ to $0$, and $\psi_2 \colon \A(\Sigma_1) \to \A(\Sigma_{\emptyset})$ defined by evaluating both $y_0$ and $y_1$ at $1$. Because a rooted cluster morphism is a ring homomorphism between unital rings, any rooted cluster morphism from $\A(\Sigma)$ to $\A(\Sigma_\emptyset)$ acts as the identity on the subring $\mathbb{Z}$. Thus, if $\varphi(y_0) = \varphi(y_1) \neq 0$, then $\psi_1$ does not factor through $\varphi$ and if $\varphi(y_0) = \varphi(y_1) = 0$, then $\psi_2$ does not factor through $\varphi$. Therefore there exists no coequalizer for $f$ and $g$ and $Clus$ is not cocomplete.
\end{proof}

\subsection{Rooted cluster algebras of infinite rank as colimits} 
Even though colimits do not in general exist in $Clus$, we can show that there are sufficient colimits such that every rooted cluster algebra of infinite rank is isomorphic to a colimit of rooted cluster algebras of finite rank. More precisely, we can write them as linear colimits. A colimit $\mathrm{colim}(F)$ in a category $\CC$ is called {\em linear}, if the index category $\J$ of the diagram $F \colon \J \to \CC$ is a set endowed with a linear order viewed as a category. A diagram $F \colon \J \to \CC$ where $\J$ is endowed with a linear order $\leq$ is just a {\em linear system} of objects in $\CC$, that is a family of objects $\{C_i\}_{i \in \J}$ and a family of morphisms $\{ f_{ij}\}_{i  \leq j \in \J}$ such that $f_{jk} \circ f_{ij} = f_{ik}$ and $f_{ii} = \id_{C_i}$ for all $i\leq j \leq k$ in $\J$. In order to explicitly construct a suitable linear system of rooted cluster algebras of finite rank, we use the fact that in certain nice cases inclusions of subseeds give rise to rooted cluster morphisms.

In general, if $\Sigma$ is a full subseed of $\Sigma'$ (see Definition \ref{D:full subseed}), the natural inclusion of rings $\A(\Sigma) \to \mathcal{F}_{\Sigma'}$ does not give rise to a rooted cluster morphism $\A(\Sigma) \to \A(\Sigma')$, see \cite[Remark~4.10]{ADS}. However, we can fix this with an additional condition which has to do with how the subseed is connected to the bigger seed.

\begin{definition}
Let $\Sigma' = (\XX',\ex',B')$ be a seed with a full subseed $\Sigma = (\XX, \ex, B=(b_{vw})_{v,w \in \XX})$ such that for every $x \in \XX$ with a neighbour $y \in \XX' \setminus \XX$ in $\Sigma'$ we have $x \in \XX \setminus \ex$, i.e.\ $x$ is a coefficient of $\Sigma$. Then we say that {\em $\Sigma$ and $\Sigma'$ are connected only by coefficients of $\Sigma$}.
\end{definition}

The condition of being connected only by coefficients is transitive.

\begin{lemma}\label{L:coefficients transitive}
If $\Sigma = (\XX,\ex,B)$ is a full subseed of $\Sigma'= (\XX',\ex',B')$ and $\Sigma'$ is a full subseed of $\Sigma''= (\XX'',\ex'',B'')$, such that $\Sigma$ and $\Sigma'$ are only connected by coefficients in $\Sigma$ and such that $\Sigma'$ and $\Sigma''$ are only connected by coefficients in $\Sigma'$, then $\Sigma$ and $\Sigma''$ are only connected by coefficients in $\Sigma$.
\end{lemma}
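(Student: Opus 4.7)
The plan is a direct unpacking of definitions, essentially a two-case analysis, preceded by the (obvious) observation that being a full subseed is itself transitive. First I would verify that $\Sigma$ is a full subseed of $\Sigma''$: the inclusions $\XX \subseteq \XX'\subseteq \XX''$ and $\ex\subseteq\ex'\subseteq\ex''$ are immediate, and for $v,w\in\XX$ the exchange matrix entries satisfy $b''_{vw}=b'_{vw}=b_{vw}$ by applying the full-subseed condition twice. So the question is only about which cluster variables can see variables outside $\XX$ through the exchange matrix of $\Sigma''$.

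Next, I would pick an arbitrary $x\in\XX$ that has a neighbour $y\in\XX''\setminus\XX$ in $\Sigma''$ and show $x\in\XX\setminus\ex$. Split into two cases according to where $y$ lives. If $y\in\XX'\setminus\XX$, then since $x,y\in\XX'$ and $\Sigma'$ is a full subseed of $\Sigma''$, the nonzero entry $b''_{xy}$ coincides with $b'_{xy}$, so $y$ is a neighbour of $x$ in $\Sigma'$; the hypothesis that $\Sigma$ and $\Sigma'$ are connected only by coefficients of $\Sigma$ then forces $x\in\XX\setminus\ex$. If instead $y\in\XX''\setminus\XX'$, then $x\in\XX\subseteq\XX'$ has a neighbour $y\in\XX''\setminus\XX'$ in $\Sigma''$, so by the hypothesis on $\Sigma'\subseteq\Sigma''$ we have $x\in\XX'\setminus\ex'$; combining with $\ex\subseteq\ex'$ (from $\Sigma\subseteq\Sigma'$ being full) yields $x\notin\ex$, hence $x\in\XX\setminus\ex$ as required.

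There is really no obstacle here; the only minor point that needs care is not conflating "coefficient of $\Sigma$" with "coefficient of $\Sigma'$" in Case~2, which is resolved by the inclusion $\ex\subseteq\ex'$. Since both cases produce $x\in\XX\setminus\ex$, this proves that $\Sigma$ and $\Sigma''$ are connected only by coefficients of $\Sigma$.
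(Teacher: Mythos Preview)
Your proof is correct and follows essentially the same approach as the paper's. The paper argues the contrapositive—starting from $x\in\ex$ and showing every neighbour of $x$ in $\Sigma''$ already lies in $\XX$—whereas you start from $x$ having an outside neighbour and split on where that neighbour lives; but the underlying logic (use $\ex\subseteq\ex'$ and the $\Sigma'\subseteq\Sigma''$ hypothesis to rule out neighbours in $\XX''\setminus\XX'$, then use the $\Sigma\subseteq\Sigma'$ hypothesis together with the full-subseed matrix agreement to rule out neighbours in $\XX'\setminus\XX$) is identical.
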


\begin{proof}
If $x \in \ex$ is an exchangeable variable of $\Sigma$ then, by the definition of full subseed, it is an exchangeable variable of $\Sigma'$. Because $\Sigma'$ and $\Sigma''$ are only connected by coefficients of $\Sigma'$, $x$ cannot have a neighbour in $\Sigma''$ that lies in $\XX'' \setminus \XX'$. All neighbours of $x$ in $\Sigma''$ thus lie in $\XX'$, and, because $\Sigma'$ is a full subseed of $\Sigma''$, these are exactly those variables that are neighbours of $x$ in $\Sigma'$. Because $\Sigma$ and $\Sigma'$ are only connected by coefficients in $\Sigma$, these neighbours must be elements of $\XX$.
\end{proof}

If $\Sigma$ is a full subseed of $\Sigma'$, such that the seeds are connected only by coefficients of $\Sigma$, then the inclusion of $\Sigma$ in $\Sigma'$ induces a rooted cluster morphism.

\begin{lemma}\label{L:subseed morphism}
Let $\Sigma = (\XX, \ex, B)$ be a full subseed of $\Sigma' = (\XX',\ex',B')$ such that $\Sigma$ and $\Sigma'$ are connected only by coefficients of $\Sigma$. Then the inclusion $f \colon \XX \to \XX'$ gives rise to a rooted cluster morphism $f \colon \A(\Sigma) \to \A(\Sigma')$.
\end{lemma}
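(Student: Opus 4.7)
The plan is to apply Theorem \ref{T:rooted cluster morphisms without specializations}, which provides necessary and sufficient conditions for a map $f \colon \XX \to \XX'$ between clusters to extend to a rooted cluster morphism. Here $f$ is just the inclusion, so this strategy should reduce the problem to straightforward verifications, with the hypothesis that $\Sigma$ and $\Sigma'$ are connected only by coefficients of $\Sigma$ entering exactly where the exchange matrices must agree in a suitable sense.

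First I would check condition (1): since $\Sigma$ is a full subseed of $\Sigma'$, we have $\ex \subseteq \ex'$ and the inclusion $f|_\ex \colon \ex \hookrightarrow \ex'$ is obviously injective. Condition (2) is vacuous because $f$ is the inclusion, hence injective, so there are no distinct $x,y \in \XX$ with $f(x) = f(y)$.

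The remaining and only substantive step is verifying condition (3). Fix $x \in \ex$, let $y \in \ex$ be any exchangeable variable in the exchangeably connected component $f(\Sigma)^{ex}_{f(x)}$, and let $v' \in \XX'$. Since $f$ is the inclusion, $y = f(y)$ and the sum $\sum_{v \in \XX \colon f(v) = v'} b_{yv}$ collapses to $b_{yv'}$ if $v' \in \XX$ and to the empty sum $0$ otherwise. So the identity to prove is $b'_{yv'} = b_{yv'}$ for $v' \in \XX$ and $b'_{yv'} = 0$ for $v' \in \XX' \setminus \XX$. The first is immediate from the definition of full subseed (see Definition \ref{D:full subseed}). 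The second is exactly where the hypothesis kicks in: because $\Sigma$ and $\Sigma'$ are connected only by coefficients of $\Sigma$, any cluster variable of $\Sigma$ with a neighbour in $\XX' \setminus \XX$ must be a coefficient of $\Sigma$; since $y \in \ex$, it cannot have such a neighbour, forcing $b'_{yv'} = 0$. Thus the "plus" branch of condition (3) holds uniformly (with no sign change needed in any exchangeably connected component), completing the verification.

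I expect no real obstacles here: the only thing to be careful about is making sure the hypothesis is applied to the correct vertices. The statement is used with $y$ exchangeable in $\Sigma$ (and hence in $\Sigma'$), so the "$x \in \XX \setminus \ex$" conclusion of the hypothesis never triggers, which is precisely what we need to kill the potentially obstructing entries of $B'$.
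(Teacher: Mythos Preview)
Your proof is correct and takes exactly the same approach as the paper, which simply states that the lemma follows directly from Theorem \ref{T:rooted cluster morphisms without specializations}. You have spelled out the verification of conditions (1)--(3) that the paper leaves implicit, and your identification of where the ``connected only by coefficients'' hypothesis enters (namely, to force $b'_{yv'} = 0$ for $y \in \ex$ and $v' \in \XX' \setminus \XX$) is exactly right.
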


\begin{proof}
This follows directly from Theorem \ref{T:rooted cluster morphisms without specializations}.
\end{proof}

For any given rooted cluster algebra $\A(\Sigma)$ we can build a linear system $\{\A(\Sigma_i)\}_{i \in \ZZ}$ of rooted cluster algebras whose initial seeds are finite full subseeds $\Sigma_i$ of $\Sigma$ such that for all $i \in \ZZ$, the seeds $\Sigma_i$ and $\Sigma$ are only connected by coefficients of $\Sigma_i$. Further, we can construct it in a way, such that for all $i \leq j$ the seed $\Sigma_i$ is a full subseed of $\Sigma_j$ and the two are connected only by coefficients of $\Sigma_i$. This construction yields a linear system of rooted cluster algebras of finite rank which has the desired rooted cluster algebra $\A(\Sigma)$ as its colimit.

\begin{theorem}\label{T:connected colimit}
Every rooted cluster algebra is isomorphic to a linear colimit of rooted cluster algebras of finite rank  in the category $Clus$ of rooted cluster algebras.
\end{theorem}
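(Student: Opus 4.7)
The plan is to realize $\A(\Sigma)$ as a linear colimit of finite-rank rooted cluster algebras coming from an exhausting chain of finite full subseeds. Let $\Sigma = (\XX, \ex, B)$; since a connected seed is automatically countable by Remark \ref{R:connected implies countable}, I restrict to the countable case (the uncountable modification being purely a matter of indexing) and fix an enumeration $\XX = \{x_1, x_2, \ldots\}$. For each $k \geq 1$ I set $\ex_k = \{x_1, \ldots, x_k\} \cap \ex$ and let $\XX_k$ be the union of $\{x_1, \ldots, x_k\}$ with every $\Sigma$-neighbour of each variable in $\ex_k$; local finiteness of $B$ makes $\XX_k$ finite. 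Restricting $B$ to $\XX_k \times \XX_k$ yields a full subseed $\Sigma_k = (\XX_k, \ex_k, B_k)$ of $\Sigma$. By construction, every exchangeable variable of $\Sigma_k$ has all its $\Sigma$-neighbours inside $\XX_k$, so $\Sigma_k$ and $\Sigma$ are connected only by coefficients of $\Sigma_k$; the same observation, applied to the nested inclusion $\Sigma_k \subseteq \Sigma_{k+1} \subseteq \Sigma$, shows that $\Sigma_k$ is a full subseed of $\Sigma_{k+1}$ also connected only by coefficients of $\Sigma_k$.

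Lemma \ref{L:subseed morphism} then promotes these inclusions to rooted cluster morphisms $\iota_{k,k+1} \colon \A(\Sigma_k) \to \A(\Sigma_{k+1})$ and $\iota_k \colon \A(\Sigma_k) \to \A(\Sigma)$ satisfying $\iota_{k+1} \circ \iota_{k,k+1} = \iota_k$, producing a linear system $\{\A(\Sigma_k), \iota_{k,k+1}\}_{k \geq 1}$ with a cone to $\A(\Sigma)$. To check universality, I take an arbitrary rooted cluster algebra $\A(\Sigma')$ together with a compatible family $\varphi_k \colon \A(\Sigma_k) \to \A(\Sigma')$. Since $\XX = \bigcup_k \XX_k$, I define $\varphi(x) := \varphi_k(x)$ for any $k$ with $x \in \XX_k$; compatibility makes this unambiguous, and Remark \ref{R:values on initial cluster} extends $\varphi$ uniquely to a ring homomorphism $\A(\Sigma) \to \mathcal{F}_{\Sigma'}$. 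Axioms CM1 and CM2 are immediate because every $x \in \XX$, respectively $x \in \ex$, lies in some $\XX_k$, respectively $\ex_k$, so $\varphi$ inherits them from $\varphi_k$; uniqueness follows from a rooted cluster morphism being determined by its values on the initial cluster.

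The main obstacle is axiom CM3. Given a $(\varphi, \Sigma, \Sigma')$-biadmissible sequence $(y_1, \ldots, y_l)$ and a variable $y \in \XX$ with $\varphi(y) \in \XX'$, I have to establish
\[
\varphi(\mu_{y_l} \circ \cdots \circ \mu_{y_1}(y)) \;=\; \mu_{\varphi(y_l)} \circ \cdots \circ \mu_{\varphi(y_1)}(\varphi(y)).
\]
My strategy is to pick $k$ large enough that $y \in \XX_k$ and the sequence $(y_1, \ldots, y_l)$ is $\Sigma_k$-admissible, and then argue that every exchange relation invoked by the iterated mutation is computed identically in $\Sigma_k$ and in $\Sigma$. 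The crux of the reduction is that when an exchangeable slot used in the sequence originates in $\ex_k$, all its $\Sigma$-neighbours already lie in $\XX_k$ by construction, so the corresponding row of $B$ equals the corresponding row of $B_k$; an inductive application of the matrix-mutation formula of Definition \ref{D:mutation of a seed} shows that this ``confined row'' property is preserved after each subsequent mutation (entries $\tilde{b}_{y_j w}$ for $w \notin \XX_k$ vanish because both $b_{y_j w}$ and the relevant $b_{y_i w}$ already vanish). Consequently $(y_1, \ldots, y_l)$ is $(\varphi_k, \Sigma_k, \Sigma')$-biadmissible, CM3 for $\varphi_k$ supplies the required identity, and transporting it along $\iota_k$ delivers CM3 for $\varphi$. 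This completes the verification of the universal property and identifies $\A(\Sigma)$ with $\mathrm{colim}_k \A(\Sigma_k)$ in $Clus$.
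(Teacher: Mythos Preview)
Your exhaustion by finite full subseeds $\Sigma_k$ is a clean variant of the paper's construction (which instead grows balls around a base point in each connected component and takes disjoint unions). Both produce a chain of finite full subseeds connected to $\Sigma$ only by coefficients, so Lemma~\ref{L:subseed morphism} applies either way; your version is arguably more direct since it does not need the preliminary decomposition into connected components.

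There is, however, a genuine gap in how you manufacture the comparison map $\varphi$. You invoke Remark~\ref{R:values on initial cluster} to extend the assignment $x\mapsto\varphi_k(x)$ from $\XX$ to a ring homomorphism $\A(\Sigma)\to\mathcal{F}_{\Sigma'}$, but that remark is stated only for maps $\XX\to\XX'$, i.e.\ for maps \emph{without specializations}. The test cone $(\varphi_k)_k$ is an arbitrary family of rooted cluster morphisms and may well send some $x\in\XX_k$ to $0\in\ZZ$; in that case the Laurent-polynomial extension is not defined, and your subsequent uses of the identity $\varphi|_{\A(\Sigma_k)}=\varphi_k$ (needed both for CM3 and for the image to land in $\A(\Sigma')$ rather than merely $\mathcal{F}_{\Sigma'}$) are unjustified. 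The paper sidesteps this entirely: it first observes, via exactly your ``confined row'' argument, that every element of $\A(\Sigma)$ already lies in $\iota_k(\A(\Sigma_k))$ for $k$ large, i.e.\ $\A(\Sigma)=\bigcup_k\iota_k(\A(\Sigma_k))$, and then defines $\varphi(a):=\varphi_k(a)$ for any $k$ with $a\in\A(\Sigma_k)$. This is well-defined by compatibility, lands in $\A(\Sigma')$ automatically, and gives $\varphi\circ\iota_k=\varphi_k$ for free. If you reorganise your proof so that the exhaustion $\A(\Sigma)=\bigcup_k\A(\Sigma_k)$ is established first and $\varphi$ is defined on all of $\A(\Sigma)$ in this way, the remainder of your CM3 verification goes through unchanged.
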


\begin{proof}
Let $\A(\Sigma)$ be a rooted cluster algebra with initial seed $\Sigma = (\XX, \ex, B=(b_{vw})_{v,w \in \XX})$. Let $\Sigma = \bigsqcup_{j \in J} \Sigma^j$ be its decomposition into connected seeds with $\Sigma^j = (\XX^j, \ex^j,B^j)$ for $j \in J$, where $J$ is some countable index set (since the cluster $\XX$ is countable by Definition \ref{D:seed}, there are only countably many connected components). We can thus write the rooted cluster algebra $\A(\Sigma)$ as the countable coproduct of the connected rooted cluster algebras $\A(\Sigma^j)$:\[\A(\Sigma) \cong \coprod_{j \in J}\A(\Sigma^j).\] For notational simplicity we assume $J = \{0, 1, \ldots, n\}$ for some $n \in \ZZ_{\geq 0}$ if $J$ is finite, and $J = \ZZ_{\geq 0}$ if $J$ is infinite. We construct a linear system of rooted cluster algebras as follows. For $j \in J$ choose $x^j_0 \in \XX^j$ and inductively define full subseeds $\Sigma_i^j$ of $\Sigma$ by
\begin{align*}
\Sigma^j_0 &= (\XX^j_0, \ex^j_0,B^j_0) = (\{x^j_0\}, \emptyset, \left[0\right])\\
\Sigma^j_{i+1} &= (\XX^j_{i+1}, \ex^j_{i+1},B^j_{i+1}) \\ 
&= \left( \XX^j_i \cup \{ w \in \XX \mid b_{vw} \neq 0 \text{ for some }v \in \XX^j_i\}, \XX^j_i \cap \ex, B^j_{i+1}\right), \text{ for } i \geq 0
\end{align*}
where $B^j_{i+1}$ is the full submatrix of $B$ formed by the entries labelled by $\XX^j_{i+1} \times \XX^j_{i+1}$. Note that because $B$ is skew-symmetrizable $b_{vw} \neq 0$ is equivalent to $b_{wv} \neq 0$. Because $B^j$ is locally finite, for all $i \geq 0$ the cluster $\XX^j_i$ in the seed $\Sigma^j_i$ is finite. We set
\[
\tilde{\Sigma}_i := \coprod_{j \in J: j\leq i} \Sigma^j_{i-j} 
\]
and write $\tilde{\Sigma}_i = (\tilde{\XX}_i, \tilde{\ex}_i, \tilde{B}_i = ((\tilde{b}_i)_{vw})_{v,w \in \tilde{X}_i})$. Because the cluster in each of the seeds $\Sigma^j_{i-j}$ for $j \in J$ with $0 \leq j \leq i$ is finite, so is the cluster $\tilde{\XX}_i$ of $\tilde{\Sigma}_i$. By definition, the seed $\tilde{\Sigma}_i$ is a full subseed of the seed $\tilde{\Sigma}_{i+1}$ for all $i \geq 0$ and all the seeds $\tilde{\Sigma}_i$ are full subseeds of $\Sigma$. 

We now want to show that for all $i \geq 0$ the seeds $\tilde{\Sigma}_i$ and $\tilde{\Sigma}_{i+1}$ are connected only by coefficients of $\tilde{\Sigma}_i$. From that it follows by Lemma \ref{L:coefficients transitive}, that $\tilde{\Sigma}_i$ and $\tilde{\Sigma}_j$  for all $i \leq j$ are connected only by coefficients of $\tilde{\Sigma}_i$.
Because the subseeds $\Sigma^j_i$ and $\Sigma^{j'}_{i'}$ are by definition mutually disconnected for $j \neq j'$ in $J$ and any $i, i' \in \mathbb{Z}_{\geq 0}$, it is enough to check that $\Sigma_i^j$ and $\Sigma^j_{i+1}$ are connected only by coefficients of $\Sigma_i^j$ for any $i \in \mathbb{Z}$ and $j \in J$. Let  $x \in \ex^j_i$ and $y \in \XX^j_{i+1}$ with $b_{xy} \neq 0$. We want to show that this implies $y \in \XX^j_i$. We have $i > 0$, since $\ex^{j}_0 = \emptyset$ for all $j \in J$. It follows that $x \in \ex^j_i = \XX^j_{i-1} \cap \ex \subseteq \XX^j_{i-1}$ and thus $y \in \{ w \in \XX \mid b_{vw} \neq 0 \text{ for some }v \in \XX^j_{i-1}\} \subseteq \XX^j_i$. Therefore $\tilde{\Sigma}_i$ and $\tilde{\Sigma}_{i+1}$ are connected only by coefficients of $\tilde{\Sigma}_i$. The same argument shows that for any $i \geq 0$ the seeds $\tilde{\Sigma}_i$ and $\Sigma$ are connected only by coefficients of $\tilde{\Sigma}_i$.

By Lemma \ref{L:subseed morphism} for $0 \leq i \leq j$, the natural inclusion $f_{ij} \colon \tilde{\XX}_i \to \tilde{\XX}_j$ gives rise to a rooted cluster morphism $f_{ij} \colon \A(\tilde{\Sigma}_i) \to \A(\tilde{\Sigma}_j)$. For all $0 \leq i \leq j \leq k$ we have $f_{jk} \circ f_{ij} = f_{ik}$ and $f_{ii} = \id_{\A(\tilde{\Sigma}_i)}$, so the morphisms form a linear system of rooted cluster algebras of finite rank. Further, again by Lemma \ref{L:subseed morphism}, for $i \geq 0$ the natural inclusion $f_i \colon \tilde{\XX}_i \to \XX$ gives rise to a rooted cluster morphism $f_i \colon\A(\tilde{\Sigma}_i) \to \A(\Sigma)$. We show that $\A(\Sigma)$ together with the maps $f_i \colon \A(\tilde{\Sigma}_i) \to \A(\Sigma)$ for $i \geq 0$ is in fact the colimit of this linear system in the category of rooted cluster algebras. 

Because for any $j \in J$, the seed $\Sigma^j$ is connected, we have $\XX^j = \bigcup_{i \geq 0}\XX^j_i$ and thus 
\[\XX = \bigsqcup_{j \in J} \XX^j = \bigsqcup_{j \in J} \bigcup_{i \geq 0} \XX^j_i  = \bigcup_{i \geq 0} \bigsqcup_{j \in J} \XX^j_i = \bigcup_{i \geq 0} \tilde{\XX}_i.\] 
Because every exchange relation in $\A(\Sigma)$ lifts to an exchange relation in $\A(\tilde{\Sigma}_i)$ for all $i$ big enough (by virtue of the exchange matrices $\tilde{B}_i$ being arbitrarily large restrictions of the exchange matrix $B$), any fixed element of $\A(\Sigma)$ is contained in $\A(\tilde{\Sigma}_i)$ for all $i$ sufficiently large.

Let $\Sigma'=(\XX', \ex', Q')$ be a seed such that for all $i \geq 0$ there are rooted cluster morphisms $g_i \colon \A(\tilde{\Sigma}_i) \to \A({\Sigma'})$ compatible with the linear system $f_{ij} \colon \A(\tilde{\Sigma}_i) \to \A(\tilde{\Sigma}_j)$. 
We define a ring homomorphism $f \colon \A(\Sigma) \to \A({\Sigma'})$ by $f(x) = g_i(x)$, whenever $x \in \A(\tilde{\Sigma}_i)$, i.e.\ it is the unique ring homomorphism making the following diagram commute.

\[
\xymatrix{
&\A({\Sigma'})& \\
& \A(\Sigma) \ar@{-->}[u]^f& \\
\A(\tilde{\Sigma}_i) \ar[ruu]^{g_i} \ar[ru]_{f_i} \ar[rr]^{f_{ij}} && \A(\tilde{\Sigma}_j) \ar[luu]_{g_j} \ar[lu]^{f_j}
}
\]
For every $x \in \XX$ (respectively $x \in \ex$), there exists a $k \geq 0$ such that $x \in \tilde{\XX}_i$ (respectively $x \in \tilde{\ex}_i$) for all $i \geq k$. Thus $f(x) = g_i(x)$ for all $i \geq k$ lies in $\XX'$ (respectively in ${\ex'}$), because $g_i$ is a rooted cluster morphism for all $i \geq 0$. Thus the ring homomorphism $f$ satisfies axioms CM1 and CM2. Let now $(x_1, \ldots, x_l)$ be a $(f,\Sigma,{\Sigma'})$-biadmissible sequence and let $y \in \XX$ such that $f(y) \in {\XX'}$. Then there exists an $i \geq 0$ such that $y \in \tilde{\XX}_i$ and the sequence $(x_1, \ldots, x_l)$ is $(g_i,\tilde{\Sigma}_i,\Sigma')$-biadmissible. Thus we get \begin{align*}
f(\mu_{x_l} \circ \ldots \circ \mu_{x_1}(y)) &= f \circ f_i(\mu_{x_l} \circ \ldots \circ \mu_{x_1}(y))\\
&= g_i(\mu_{x_l} \circ \ldots \circ \mu_{x_1}(y))= \mu_{g_i(x_l)} \circ \ldots \circ \mu_{g_i(x_1)}(g_i(y)))\\
&= \mu_{f(x_l)} \circ \ldots \circ \mu_{f(x_1)}(f(y)).
\end{align*}
Therefore the ring homomorphism $f$ satisfies CM3 and is a rooted cluster morphism. Thus $\A(\Sigma)$ satisfies the required universal property.
\end{proof}

\begin{remark}
Work in progress by Stovicek and van Roosmalen shows the analogue of Theorem \ref{T:connected colimit} for cluster categories of infinite rank. However, their approach is different and it is not clear that either result can be easily obtained from the other.
\end{remark} 

\begin{remark}\label{R:colimit uncountable}
The proof of Theorem \ref{T:connected colimit} assumes that the seed of our cluster algebra has a countable cluster. We can omit this assumption, but the price we pay is that the colimit is no longer linear. If we allow seeds with uncountable clusters, by Remark \ref{R:connected implies countable} every connected component is still countable. For any given rooted cluster algebra of possibly uncountable rank, we can take the decomposition of its initial seed into (possibly uncountably many) connected components. This allows us to write our rooted cluster algebra as a (possibly uncountable) coproduct of linear colimits of rooted cluster algebras of finite rank, which -- since taking coproducts is a special example of a colimit -- is a colimit of rooted cluster algebras of finite rank.
\end{remark}

\subsection{Positivity for cluster algebras of infinite rank}\label{S:positivity}
Fomin and Zelevinsky showed in \cite[Theorem~3.1]{FZ1} that every cluster variable of a cluster algebra of finite rank is a Laurent polynomial in the elements of its initial cluster over $\ZZ$ and they conjectured that the coefficients in this Laurent polynomial are nonnegative. The so-called positivity conjecture has been a central problem in the theory of cluster algebras and has recently been solved by Lee and Schiffler \cite{LS} for all skew-symmetric cluster algebras of finite rank. Previously, the problem had been solved via different approaches for important special cases, such as for acyclic cluster algebras by Kimura and Qin \cite{KQ} and for cluster algebras from surfaces by Musiker, Schiffler and Williams \cite{MSW}.

\begin{theorem}\label{T:positivity}
The positivity conjecture holds for every skew-symmetric cluster algebra of infinite rank, i.e.\ for every skew-symmetric cluster algebra $\A(\Sigma)$ of infinite rank associated to a seed $\Sigma = (\XX, \ex, Q)$, every cluster variable in $\A(\Sigma)$ is a Laurent polynomial in $\XX$ over $\mathbb{Z}$ with nonnegative coefficients.
\end{theorem}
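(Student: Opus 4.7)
The plan is to combine Theorem~\ref{T:connected colimit} with the finite-rank positivity theorem of Lee and Schiffler \cite{LS}. Fix a cluster variable $x \in \A(\Sigma)$, so that $x = \mu_{x_l} \circ \cdots \circ \mu_{x_1}(y)$ for some $y \in \XX$ and some $\Sigma$-admissible sequence $(x_1, \ldots, x_l)$. Using the linear system $\{\A(\tilde{\Sigma}_i)\}_{i \geq 0}$ of finite-rank rooted cluster algebras with inclusions $f_i \colon \A(\tilde{\Sigma}_i) \to \A(\Sigma)$ constructed in the proof of Theorem~\ref{T:connected colimit}, I will exhibit an index $i$ for which $x$ is itself a cluster variable of $\A(\tilde{\Sigma}_i)$, then invoke \cite{LS} inside the finite-rank algebra and transport the expansion back to $\A(\Sigma)$.

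First, choose $i$ large enough that $y$ and every $x_j$ lie in $\tilde{\XX}_i$, and moreover so that every $x_j$ that belongs to $\ex$ already lies in $\tilde{\ex}_i$; this is possible because the chain $\{\tilde{\ex}_i\}_{i \geq 0}$ is ascending and exhausts $\ex$, and only finitely many variables need to be accommodated. I then verify by induction on $1 \leq j \leq l$ that $x_j \in \mu_{x_{j-1}} \circ \cdots \circ \mu_{x_1}(\tilde{\ex}_i)$: indeed, since $(x_1, \ldots, x_l)$ is $\Sigma$-admissible, each $x_j$ is either of the form $\mu_{x_k} \circ \cdots \circ \mu_{x_1}(z)$ for some $z \in \ex$ which, by the choice of $i$, has already entered $\tilde{\ex}_i$, and hence lies in $\mu_{x_{j-1}} \circ \cdots \circ \mu_{x_1}(\tilde{\ex}_i)$. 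Thus $(x_1, \ldots, x_l)$ is $\tilde{\Sigma}_i$-admissible. By Lemma~\ref{L:subseed morphism} the inclusion $f_i$ is a rooted cluster morphism without specializations, so by Lemma~\ref{L:biadmissible} this sequence is also $(f_i, \tilde{\Sigma}_i, \Sigma)$-biadmissible, and axiom CM3 yields that the element $\mu_{x_l} \circ \cdots \circ \mu_{x_1}(y)$ computed inside $\A(\tilde{\Sigma}_i)$ coincides, under the inclusion $f_i$, with the cluster variable $x$ of $\A(\Sigma)$. In particular $x$ is itself a cluster variable of $\A(\tilde{\Sigma}_i)$.

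To finish, I note that $\A(\tilde{\Sigma}_i)$ is skew-symmetric, because its exchange matrix $\tilde{B}_i$ is a principal submatrix of the skew-symmetric matrix $B$, and of finite rank by construction. The theorem of Lee and Schiffler \cite{LS} therefore expresses $x$ as a Laurent polynomial in $\tilde{\XX}_i$ with nonnegative integer coefficients; since $\tilde{\XX}_i \subseteq \XX$ and the elements of $\XX$ are algebraically independent over $\mathbb{Q}$, this is at the same time the Laurent expansion of $x$ in the initial cluster $\XX$, and all its coefficients lie in $\mathbb{Z}_{\geq 0}$.

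The technical heart of the argument is the bookkeeping needed to realize $x$ not merely as an element but as a cluster variable of the finite-rank subalgebra $\A(\tilde{\Sigma}_i)$; the without-specializations character of the inclusions $f_i$, established in the construction underlying Theorem~\ref{T:connected colimit}, is precisely what makes this reduction possible, so I expect no further obstacle.
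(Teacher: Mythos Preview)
Your proof is correct and follows essentially the same approach as the paper's: reduce to a finite-rank subseed $\tilde{\Sigma}_i$ from the colimit construction, realize the given cluster variable as a cluster variable there via CM3, and apply Lee--Schiffler. One small wording issue: the phrase ``choose $i$ large enough that \ldots\ every $x_j$ lie in $\tilde{\XX}_i$'' is not literally correct for $j \geq 2$ (those $x_j$ need not lie in $\XX$ at all); what you need---and what your induction actually uses---is that the preimages $z_j \in \ex$ with $x_j = \mu_{x_{j-1}} \circ \cdots \circ \mu_{x_1}(z_j)$ all lie in $\tilde{\ex}_i$.
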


\begin{proof}
Let $\Sigma = (\XX, \ex, Q)$ be a skew-symmetric cluster algebra of infinite rank. Using the construction in the proof of Theorem \ref{T:connected colimit}, the associated rooted  cluster algebra $\A(\Sigma)$ can be written as a linear colimit $\A(\Sigma) = \mathrm{colim}(\A(\Sigma_i))$ of a linear system $\{\A(\Sigma_i)\}_{i \in \mathbb{Z}}$ of skew-symmetric rooted cluster algebras of finite rank with seeds $\Sigma_i = (\XX_i, \ex_i, B_i)$ and with canonical inclusions $f_i \colon \A(\Sigma_i) \to \A(\Sigma)$ for $i \in \mathbb{Z}$. Let $\tilde{x} \in \A(\Sigma)$ be a cluster variable, thus $\tilde{x} = \mu_{x_l}\circ \ldots \circ \mu_{x_1}(x)$ for some $x \in \XX$ and some $\Sigma$-admissible sequence $(x_1, \ldots, x_l)$. Then there exists an $i \in \mathbb{Z}$ such that $x \in \XX_i$ and $(x_1, \ldots, x_l)$ is $\Sigma_i$-admissible. 
Set $y = \mu_{x_l} \circ \ldots \circ \mu_{x_1}(x)$ in $\A(\Sigma_i)$. By axiom CM3 for $f_i$ we have $f_i(y) = \tilde{x}$. By \cite[Theorem~4.2]{LS}, the cluster variable $y \in \A(\Sigma_i)$ is a Laurent polynomial in $\XX_i$ over $\mathbb{Z}$ with nonnegative coefficients. Since $f_i$ is a ring homomorphism (with $f_i(1) = 1$) the image $\tilde{x}=f_i(y)$ is a Laurent polynomial in $f_i(\XX_i) \subseteq \XX$ over $\mathbb{Z}$ with nonnegative coefficients.
\end{proof}

\begin{remark}
The positivity conjecture still holds if we allow uncountable clusters: Let $\A(\Sigma)$ be a rooted cluster algebra of uncountable rank. We can decompose it into its connected components $\A(\Sigma_i)$ with seeds $\Sigma_i = (\XX_i, \ex_i,B_i)$ for $i \in I$ for some uncountable index set $I$. By Remark \ref{R:connected implies countable}, for all $i \in I$ the rooted cluster algebra $\A(\Sigma_i)$ is of countable rank and, by the defintion of coproduct, every cluster variable $x$ in $\A(\Sigma)$ lives in the cluster algebra $\A(\Sigma_i)$ of countable rank for a unique $i \in I$. Since the positivity conjecture holds for $\A(\Sigma_i)$, the cluster variable $x$ is a Laurent Polynomial in $\XX_i$ with nonnegative integer coefficients, and thus in particular a Laurent polynomial in $\XX$ with nonnegative integer coefficients.
\end{remark}

\subsection{Rooted cluster algebras from infinite triangulations of the closed disc}\label{S:triangulation colimits}

It follows from Theorem \ref{T:connected colimit} that every rooted cluster algebra arising from a countable triangulation of the closed disc can be written as a colimit of rooted cluster algebras of finite rank. Moreover, as we will see in this section, it can be written as a linear colimit of rooted cluster algebras that arise from finite triangulations of the closed disc. Thus we obtain a formal way of treating cluster algebras associated to infinite triangulations of the closed disc as infinite versions of cluster algebras of Dynkin type $A$. This provides the algebraic analogue of the work of Holm and J\o rgensen \cite{HJ} and Igusa and Todorov (\cite{IT.cluster}, \cite[Section~2.4]{IT:cyclic}), who introduced infinite versions of cluster categories of Dynkin type $A$.

\begin{theorem}\label{T:colimit of triangulations}
Let $\T$ be a countable triangulation of the closed disc with marked points $\Z$. Then the associated rooted cluster algebra $\A(\Sigma_\T)$ is isomorphic to a countable coproduct $\A(\Sigma_\T) \cong \coprod_{j \in I} \A(\Sigma_{\T_j})$ of linear colimits 
$\A(\Sigma_{\T_j}) \cong \mathrm{colim} (\A(\Sigma_{\T^j_i})$ of rooted cluster algebras $\A(\Sigma_{\T^j_i})$ of finite Dynkin type $A$.
\end{theorem}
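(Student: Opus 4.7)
The plan is to combine Lemma \ref{L:connected triangulation} with a geometric refinement of the construction used in the proof of Theorem \ref{T:connected colimit}. Lemma \ref{L:connected triangulation} immediately yields
\[
\A(\Sigma_\T) \;\cong\; \coprod_{j \in I} \A(\Sigma_{\T_j}),
\]
with $I$ countable and each $\T_j$ a connected sub-triangulation of $\T$. It therefore suffices, for each connected $\T_j$, to construct a linear system of finite sub-triangulations of $\T_j$ whose associated rooted cluster algebras have colimit $\A(\Sigma_{\T_j})$ in $Clus$.

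Fix such a connected $\T_j$. Unless $\T_j$ consists of a single (limit) arc (in which case $\A(\Sigma_{\T_j})$ is a polynomial ring on one generator and is trivially its own colimit), I would pick an initial triangle $T_0$ of $\T_j$ and define inductively $\T^j_0$ to be the set of sides of $T_0$ and $\T^j_{i+1}$ to be the union of $\T^j_i$ with the sides of every triangle of $\T_j$ that shares an arc with a triangle whose three sides all lie in $\T^j_i$. Each $\T^j_i$ is then a finite collection of pairwise non-crossing arcs which triangulates the closed polygonal region obtained by gluing finitely many triangles of $\T_j$ along shared edges starting from $T_0$; since this region is built inside the closed disc and grows by triangle adjacency, it is simply connected, hence a closed disc with finitely many marked points on its boundary. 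By Example \ref{E:Dynkin type A}, the rooted cluster algebra $\A(\Sigma_{\T^j_i})$ is of finite Dynkin type $A$.

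To upgrade the geometric inclusions $\T^j_i \subseteq \T^j_{i+1} \subseteq \T_j$ into rooted cluster morphisms I would apply Lemma \ref{L:subseed morphism}. The crucial observation is that an exchangeable arc of $\Sigma_{\T^j_i}$ is by definition the diagonal of a quadrilateral in $\T^j_i$, so both triangles of $\T_j$ adjacent to it already belong to $\T^j_i$; consequently, all of its neighbours in $\Sigma_{\T_j}$ lie in $\T^j_i$. Together with the local description of the exchange quiver in Definition \ref{seed of a triangulation}, this shows that $\Sigma_{\T^j_i}$ is a full subseed of both $\Sigma_{\T^j_{i+1}}$ and of $\Sigma_{\T_j}$, connected to each only by coefficients of $\Sigma_{\T^j_i}$. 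Lemma \ref{L:subseed morphism} then supplies the rooted cluster morphisms that assemble $\{\A(\Sigma_{\T^j_i})\}_{i \geq 0}$ into a linear system indexed by $\ZZ_{\geq 0}$, together with compatible morphisms into $\A(\Sigma_{\T_j})$.

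Finally, to identify $\A(\Sigma_{\T_j})$ with the colimit of this linear system I would repeat verbatim the universal-property verification from the last paragraph of the proof of Theorem \ref{T:connected colimit}; the only extra input needed is that $\bigcup_i \T^j_i = \T_j$. This follows from connectedness of $\Sigma_{\T_j}$: any arc $\gamma \in \T_j$ is joined to an arc in $T_0$ by a path $\gamma_0, \ldots, \gamma_k$ in $\Sigma_{\T_j}$ whose consecutive members share triangles $T_0, T_1, \ldots, T_{k-1}$ of $\T_j$ pairwise adjacent via those shared arcs, so $\gamma$ enters $\T^j_i$ as soon as $i \geq k$. I expect the main obstacle to be the purely geometric step of verifying that each $\T^j_i$ really is a finite triangulation of a closed disc in the sense required by Example \ref{E:Dynkin type A}; once this is in place, both the morphism-theoretic content (via Lemma \ref{L:subseed morphism}) and the colimit property (via the proof of Theorem \ref{T:connected colimit}) are essentially formal.
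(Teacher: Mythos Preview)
Your proposal is correct and follows essentially the same route as the paper: reduce to connected components via Lemma \ref{L:connected triangulation}, then for each connected piece build an exhaustive chain of finite sub-triangulations by successively adjoining adjacent triangles, invoke Lemma \ref{L:subseed morphism} for the transition maps, and import the universal-property check from the proof of Theorem \ref{T:connected colimit}. The only cosmetic difference is that the paper seeds the chain with a single arc and grows by the rule ``add every arc sharing a triangle with some arc already present'', whereas you seed with a full triangle and grow by triangle adjacency; both produce the same kind of nested finite polygons, and your explicit verification that exchangeable arcs of $\T^j_i$ already have all their neighbours in $\T^j_i$ is exactly the ``connected only by coefficients'' check the paper leaves implicit.
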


\begin{proof}
We can directly translate the proof of Theorem \ref{T:connected colimit} to this situation. Let first $\T$ be a connected triangulation. We can build a linear system of rooted cluster algebras associated to finite triangulations of the closed disc as follows. Let $\{x_0,x_1\} \in \T$ and set $\T_0 = \{\{x_0,x_1\}\}$ and for all $i \geq 0$ set
\[
\T_{i+1} = \T_i \cup \Bigg \{\alpha \in \T \ \Big| \begin{matrix} \text{ there exists a $\beta \in \T_i$ such that $\alpha$ and $\beta$} \\ \text{ are sides of a common triangle in }\T \end{matrix}\Bigg \},
\]
where, for all $i \geq 0$, $\T_i$ as a triangulation of the closed disc with marked points $\Z_i$ being the endpoints of arcs in $\T_i$. We pass from $\T_i$ to $\T_{i+1}$ by glueing triangles to all of those edges of $\Z_i$ that are not edges of $\Z$.
We can write $\T$ as the countable union of these finite triangulations of the closed disc which are ordered by inclusion:
\[
\T = \bigcup\limits \limits_{i\geq 0} \T_i, \text{ with }\T_i \subseteq \T_j \; \text{ for all } j \geq i \geq 0.
\]
By Lemma \ref{L:subseed morphism}, the natural inclusions $f_{ij} \colon \A(\Sigma_{\T_i}) \to \A(\Sigma_{\T_j})$ for $0 \leq i \leq j$ provide a linear system of rooted cluster algebras and following the lines of the proof of Theorem \ref{T:connected colimit} there is an isomorphism of rooted cluster algebras
\[
\A(\Sigma_{\T}) \cong \mathrm{colim} (\A(\Sigma_{\T_i})).
\]
The rooted cluster algebras $\A(\Sigma_{\T_i}) $ are associated to finite triangulations of the closed disc and thus are of finite Dynkin type $A$. By Lemma \ref{L:connected triangulation}, every rooted cluster algebra associated to a triangulation of the closed disc is isomorphic to a coproduct of rooted cluster algebras associated to connected triangulations of the closed disc. This proves the claim.
\end{proof}


In the case where the set of marked points $\Z \subseteq S^1$ has precisely one limit point, the cluster algebras associated to triangulations of $\Z$ have been classified by their connected components in \cite{GG}.
%
%
This was inspired by Holm and J\o rgensen's study of the cluster category of infinite Dynkin type $A_{\infty}$. Igusa and Todorov introduced generalizations of this cluster category via the continuous cluster category of Dykin type $A$ in \cite{IT.cluster} and discrete cluster categories of Dynkin type $A$ in  \cite{IT:cyclic}. These cluster categories have combinatorial interpretations via countable triangulations of the closed disc and find their algebraic counterparts in countable coproducts of linear colimits of rooted cluster algebras of finite Dynkin type $A$.


\begin{thebibliography}{}
	
	\bibitem[ADS]{ADS}
	 I. Assem, G. Dupont and R. Schiffler,
	\emph{On a category of cluster algebras}.
	J. Pure Appl. Algebra 218, no. 3, 553--582,
	(2014)

	\bibitem[ASS]{ASS}
	 I. Assem, R. Schiffler and V. Shramchenko,
	\emph{Cluster automorphisms}.
	 Proc. Lond. Math. Soc. (3) 104, no. 6, 1271--1302,
	(2012)
	
	\bibitem[CZ]{CZ}
	W. Chang and B. Zhu,
	\emph{On rooted cluster morphisms and cluster structures in 2-Calabi-Yau triangulated categories}.
	Preprint available on arXiv:1410.5702,
	(2014)

	\bibitem[FST]{Fomin-Shapiro-Thurston}
	S. Fomin, M. Shapiro and D. Thurston,
	\emph{Cluster algebras and triangulated surfaces. I. Cluster complexes}.
	Acta Math. 201, no. 1, 83--146,
	(2008)


	\bibitem[FZ1]{FZ1}
	S. Fomin and A. Zelevinsky, 
	\emph{Cluster algebras. I. Foundations.}
	J. Am. Math. Soc. 15, no. 2, 497--529
	(2002)


	\bibitem[FZ2]{FZ2}
	S. Fomin and A. Zelevinsky, 
	\emph{Cluster algebras. II. Finite type classification.}
	Invent. Math. 154, no. 1, 63--121,
	(2003)


	\bibitem[GG]{GG} 
	J. E. Grabowski and S. Gratz, with an appendix by M. Groechenig,
	\emph{Cluster algebras of infinite rank}. 
	J. Lond. Math. Soc., 89, no. 2, 337--363
	(2014)


	\bibitem[HL]{HL} 
	D. Hernandez and B. Leclerc, 
	\emph{A cluster algebra approach to q-characters of Ki\-ril\-lov-\-Re\-she\-ti\-khin mo\-dules}. 
	To appear in J. Eur. Math. Soc., preprint available on arXiv:1303.0744.


	\bibitem[HJ]{HJ}
	T. Holm and P. J\o rgensen
	\emph{On a cluster category of infinite Dynkin type, and the relation to 	triangulations of the infinity-gon.}
	Math. Z. 270, no. 1--2, 277--295
	(2012)
 	
	\bibitem[IT1]{IT.cluster} 
	K. Igusa and G. Todorov,
	\emph{Continuous cluster categories I}. 
	Algebr. and Represent. Theory, 1--37
	(2014)
	
	\bibitem[IT2]{IT:cyclic}
	K. Igusa and G. Todorov,
	\emph{Cluster categories coming from cyclic posets}. 
	To appear in Commun. Algebra,
	preprint available on arXiv:1303.6697.

	\bibitem[KQ]{KQ}
	 Y. Kimura and F. Qin, 
	\emph{Graded quiver varieties, quantum cluster algebras and dual canonical basis}.
	Adv. Math. 262, 261--312
	(2014)

	\bibitem[LS]{LS}
	K. Lee and R. Schiffler,
	\emph{Positivity for cluster algebras}.
	To appear in Ann. of Math.,
	Preprint available on arXiv:1306.2415.

	\bibitem[ML]{MacLane} 
	S. Mac Lane,
	\emph{Categories for the working Mathematician}. 
	Second edition. Graduate Texts in Mathematics, 5. Springer Verlag, New York 
	(1998)

	\bibitem[MSW]{MSW}
	 G. Musiker, R. Schiffler and L. Williams, 
	\emph{Positivity for cluster algebras from surfaces}.
	Adv. Math. 227, no. 6, 2241--2308	
	(2011)
	
\end{thebibliography}
\end{document}